\def \Z{\mathbb Z}
\def \R{\mathbb R}
\def \CC{\mathbb C}
\def \T{\mathbb T}
\def \E{\mathbb E}
\def \P{\mathbb P}
\def \N{\mathbb N}
\def \Q{\mathbb Q}
\def \A{\mathcal A}
\def \B{\mathcal B}
\def \C{\mathcal C}
\def \F{\mathcal F}
\def \I{\mathcal I}
\def \F{\mathcal F}
\newtheorem{theorem}{Theorem}[section]
\newtheorem{theorem01}{Theorem}[section]
\newtheorem*{question*}{Question}
\newtheorem*{proposition*}{Proposition}
\newtheorem*{theorem*}{Theorem}
\newtheorem{proposition}[theorem]{Proposition}
\newtheorem{corollary}[theorem]{Corollary}
\newtheorem{lemma}[theorem]{Lemma}
\theoremstyle{definition}
\newtheorem*{definitions*}{Definitions}
\newtheorem*{definition*}{Definition}
\newtheorem{definition}{Definition}
\newtheorem*{examples*}{Examples}
\newtheorem*{exmple*}{Example}
\newtheorem{exmple}[theorem]{Example} 
\newtheorem{question}{Question}
\newtheorem*{remark*}{Remark}
\newtheorem{remark}[theorem]{Remark}
\newtheorem*{remarks*}{Remarks}
\newtheorem*{note*}{Note}
\begin{document}
 
\title
{Van der Corput sets in $\Z^d$}

\author{Vitaly Bergelson and Emmanuel Lesigne}

\address[Vitaly Bergelson]{Dept. of Mathematics\\The Ohio State University\\Columbus, OH 43210, USA}
\email{vitaly@math.ohio-state.edu}

\address[Emmanuel Lesigne]{Laboratoire de Math\'ematiques et Physique Th\'eorique (UMR CNRS 6083)\\
F\'ed\'eration de Recherche Denis Poisson\\ Universit\'e
Fran\c{c}ois Rabelais\\ Parc de Grandmont, 37200 Tours, France}
\email{lesigne@univ-tours.fr}

\subjclass{11K06, 28D05, 37A45, 42A82}
\keywords{van der Corput inequality, van der Corput set, set of recurrence, intersective set, uniform distribution, positive-definite sequences, ergodic sequences}
\thanks{The first author gratefully acknowledges the support of the NSF under grant DMS-0600042}
\maketitle
\centerline{\today}
\begin{abstract}
In this partly expository paper we study van der Corput sets in $\Z^d$, with a focus on 
connections with harmonic analysis and recurrence properties of measure preserving 
dynamical systems. We prove multidimensional versions of some classical results obtained 
for $d=1$ in \cite{K-MF} and \cite{R}, establish new characterizations, introduce and 
discuss some modifications of van der Corput sets which correspond to various notions 
of recurrence, provide numerous examples and formulate some natural open questions.
\end{abstract}
\newpage
\tableofcontents
\baselineskip=17pt  

\section*{Introduction}
The main topic of our paper is the intriguing connection between 
positive-definite sequences, recurrence properties of measure preserving 
dynamical systems, and the theory of uniform distribution mod 1.

Let $(X,\A,\mu,T)$ be an invertible probability measure preserving dynamical 
system\footnote{Unless explicitly stated otherwise, we will assume in this 
paper that the measure preserving transformations we are dealing with are 
invertible and that invariant measures are normalized. We will write 
\emph{m.p.s.} for \emph{invertible probability measure preserving dynamical 
system}.}. Given a set $A\in\A$ with $\mu(A)>0$, let $R_A=\{n\in\Z,\,n\neq~0\,:\,
\mu\left(A\cap T^nA\right)>0\}$. While the classical Poincar\'e recurrence 
theorem, which states that the set $R_A$ is non-empty (and hence infinite), 
is nowadays an easy exercise, quite a few of
the more subtle properties of \emph{sets of 
returns} $R_A$ and of related sets $R_{A,\epsilon}=\{n\in\Z,\, n\neq0\,:\,
\mu\left(A\cap T^nA\right)>\epsilon\}$ are still not fully understood.

Following Furstenberg (\cite{HF}), let us call a set of integers 
$D$ a \emph{set of
recurrence}, if for any m.p.s. $(X,\A,\mu,T)$ and any $A\in\A$ with $\mu(A)>0$ one 
has $D\cap
R_A\neq\emptyset$. For example, for any $k\in\N$, the set $k\N$ is a set of recurrence 
(just consider the system $(X,\A,\mu,T^k)$) and any set of recurrence has a non-empty 
intersection with the set $k\N$ (just consider a permutation of a finite set). 
A
more general (and still rather trivial) example is provided by the set of differences 
$\{n_i-n_j\,:\,i>j\}$, where $(n_i)_{i\geq1}$ is an increasing sequence of integers. 
(To
see that this is a set of recurrence, just observe that if $\mu(A)>0$, then 
the sets $T^{n_i}A$ cannot be pairwise 
disjoint, $\mu(X)$ being finite.) The following generalization of the Poincar\'e
recurrence 
theorem obtained by Furstenberg (see \cite{HF1}, \cite{HF}) gives a much less trivial example of a set of recurrence.
\begin{theorem01}
For any polynomial $p(n)\in\Z[n]$, satisfying $p(0)=0$, for any m.p.s. $(X,\A,\mu,T)$ and for any $A\in\A$ with $\mu(A)>0$, there 
exists $n\in\N$ such that $p(n)\neq0$ and $\mu\left(A\cap T^{p(n)}A\right)>0$.
\end{theorem01}

Following Ruzsa (\cite{R}), let us call a set $D\subset\N$ \emph{intersective}, if for 
any  $S\subset\N$ of positive upper density\footnote{The subset $S$ of $\N$ has 
\emph{positive upper density} if $$\overline d(S):=\limsup_{N\to+\infty}\frac1N
\left|S\cap\{1,2,\ldots,N\}\right|>0.$$} there exist $x,y\in S$ such 
that $x-y\in D$. It is not hard to show that a set $D$ is intersective if and only 
if it is a set of \emph{combinatorial recurrence}, that is, such that 
for any $S\subset\N$ with $\overline d(S)>0$, there exists $n\in D$ such that 
$\overline d\left(S\cap(S-n)\right)>0$. This hints that the notions ``set of recurrence''
and ``intersective set'' are related and, indeed, it turns out 
that these notions coincide. (The fact that intersectivity implies measure-theoretic recurrence has been remarked by several authors, see for example \cite{Anne} 
and \cite{bergelson}. The fact that measure-theoretic recurrence implies combinatorial recurrence is a consequence of \emph{Furstenberg's correspondence principle}, see for example \cite{VB3}.)

Thus, for example, Theorem 0.1 implies S\`ark\"ozy's theorem (\cite{sarko}), 
which states that for any polynomial $p(n)\in\Z[n]$ satisfying $p(0)=0$ and any set 
$S\subset\N$ with $\overline d(S)>0$ there exist $x,y\in S$ and $n\in\N$ 
such that $x-y=p(n)$.
We remark that it was shown in \cite{K-MF} that a necessary and sufficient condition 
for a polynomial $p(n)\in\Z[n]$ to satisfy the 
Furstenberg-S\`ark\"ozy theorem is that for any positive integer $k$ there exists an 
integer $n$ such that $p(n)$ is divisible by $k$. Actually, Kamae and Mend\`es France 
in \cite{K-MF} showed that many sets of recurrence, including the mentioned above sets 
have a stronger property which they called the \emph{van der Corput property}. 

\begin{definition*}
A set $D$ of positive integers is a van der Corput set (or vdC set) if it has the
following 
property : given a real sequence $(x_n)_{n\in\N}$, if all the sequences $(x_{n+d}-
x_n)_{n\in\N},\,d\in D,$ are uniformly distributed mod 1, then the sequence 
$(x_n)_{n\in\N}$ is itself uniformly distributed mod 1. \end{definition*}

This concept and terminology\footnote{Ruzsa uses the name \emph{correlative set} 
instead of van der Corput set.} come from the \emph{van der Corput inequality}, which 
is presented at the beginning of the next section, and which motivates the following
\emph{van der Corput trick}: if for a given real sequence $(x_n)_{n\in\N}$ and any 
$h\in\N$ the sequence $(x_{n+h}-x_n)_{n\in\N}$ is uniformly distributed mod 1, then 
the sequence $(x_n)_{n\in\N}$ is uniformly distributed mod 1. Van der Corput's 
inequality and its application to uniform distribution appeared for the first time 
in \cite{vdc}, under the name \emph{Dritte Haupteigenschaft} (third principal property).

\medbreak
Kamae and Mend\`es France showed in \cite{K-MF} that every vdC set is a set of recurrence. The other implication is false : Bourgain has constructed in \cite{JB} an example of a set of recurrence which is not a vdC set.

The notions introduced above are connected via the notion of positive-definiteness. 
Indeed, it is easy to check that the sequence $\left(\mu(A\cap T^nA)\right)$  is positive-definite\footnote{This fact was first noticed and utilized by Khintchine in \cite{Kh}.}, which establishes the connection between sets of recurrence and properties of positive-definite sequences. As for the vdC property, let us first note that in light of Weyl's criterion (see \cite{KN}), the sequence $(x_{n+d}-x_n)_{n\in\N}$ is uniformly distributed mod 1 if and only if, for any $k\in\Z,\,k\neq0,$ one has
\begin{equation}\label{Wc}
\lim_{N\to+\infty}\frac1N\sum_{n=1}^N e^{2\pi ik(x_{n+d}-x_n)}=
\lim_{N\to+\infty}\frac1N\sum_{n=1}^N e^{2\pi ikx_{n+d}}\overline{e^{2\pi ikx_n}}=0.
\end{equation}
Now, given a bounded sequence $\alpha:\N\rightarrow\CC$, it is not hard to see that 
for some increasing sequence of integers $(N_j)_{j\in\N}$ the limit
\begin{equation}\label{correl}
\lim_{j\to+\infty}\frac1{N_{j}}\sum_{n=1}^{N_{j}} 
\alpha(n+d)\overline{\alpha(n)}
=\gamma(d)
\end{equation}
exists for all $d\in\Z$ and that, moreover, the sequence $\gamma$ is positive-definite (see \cite{bertrandias}). Juxtaposing (\ref{Wc}) and (\ref{correl}) we see that the vdC property is also connected to the properties of positive-definite sequences.

By the Bochner-Herglotz theorem (see for example \cite{rudin}, Subsection 1.4.3), any 
positive-definite sequence $\varphi$ is given by the 
Fourier coefficients of a positive measure $\nu_\varphi$ on the circle :
$$
\varphi(n)=\int_{\T} e^{2\pi inx}\, \text{d}\nu_\varphi(x)\;,
$$
and the properties of this measure play a crucial role in verifying that certain sets are vdC and in establishing the connections between (various versions of) vdC sets and sets of recurrence (see in particular Section \ref{vdc-recur} below ).

The following fact is also useful for a better understanding of the link between vdC sets and
sets of recurrence. Let $D\subset\Z$. We prove (see Corollary \ref{cor.gen.vdC.ineq.hilb})
that $D$ is a vdC set if and only if the following is true : given a bounded sequence
of complex numbers
$(u_{n})_{n\in\N}$, if for all $d\in D$, the sequence $(u_{n+d}\overline{u_{n}})$ converges
to zero in the Ces\`aro sense, then the sequence $(u_{n})$ also converges to zero in the Ces\`aro
sense. We also prove (see Theorem~\ref{vdc01}) that $D$ is a set of recurrence if
and only if the analagous property holds with ``$(u_{n})$ is a bounded sequence of complex
numbers'' replaced by  ``$(u_{n})$ is a bounded sequence of positive real
numbers''.\\

Driven by the desire to obtain new applications to combinatorics and to better understand 
the recurrence properties of measure-preserving $\Z^d$-actions, we focus in this 
paper on $\Z^d$ versions of vdC sets. As we will see, many known properties extend from 
$\Z$  to $\Z^d$ with relative ease. Still, some properties turn out to be more recalcitrant 
and their extensions to $\Z^d$ demand more work.

The definition of vdC set in $\Z^d$ is given in Subsection \ref{vdc-sec}.
Here are some examples of facts/theorems which will be obtained in subsequent sections.\begin{itemize}
\item 
The class of vdC sets has the Ramsey property. Namely, If $D$ is a vdC set in $\Z^{d}$ and if 
$D=D_{1}\cup D_{2}$ then at least one
        of the $D_{i}$ is a vdC set. 
\item
Let $p_1,p_2,\ldots,p_d$ be a finite family of polynomials with integer coefficients, to which we associate the subset $S=\{(p_1(n),p_2(n),.\,.\,.,p_d(n)):n\in\N\}$ of $\Z^d$. The following properties are equivalent :
\\
- The set $S$ is a set of recurrence for $\Z^d$-actions\footnote{A subset $S$ of
$\Z^d$ is called a set of recurrence for $\Z^{d}$-actions if, given any 
measure preserving $\Z^d$-action $\left(T_{n}\right)_{n\in\Z^d}$ on a probability 
space $(X,\A,\mu)$ and any $A\in\A$ with 
$\mu(A)>0$, there
exists $n\in S,\,n\neq0$ such that 
$\mu\left(A\cap T_{n}A\right)>0$.}.
\\
- The set $S$ is a vdC set in $\Z^d$.
\\
- The set $S$ is a set of multiple recurrence for 
$\Z$-actions\footnote{A subset $S$ of
$\Z^d$ is called a set of multiple recurrence for $\Z$-actions if, given any m.p.s. $(X,\A,\mu,T)$ and any $A\in\A$ with 
$\mu(A)>0$, there
exists $(n_1,n_2,\ldots,n_d)\in S\setminus\{(0,0,\ldots,0)\}$ such that 
$\mu\left(A\cap T^{n_1}A\cap T^{n_2}A\cap\ldots\cap T^{n_d}A\right)>0$.}.
\\
- For any $q\in\N$, there exists $n\in\N$ such that $p_1(n), p_2(n),\ldots, p_d(n)$ are all divisible by $q$.\\
Moreover these equivalent properties are also necessary and sufficient for the set $S$ to be an
\emph{enhanced vdC set} (see Definition \ref{enhanced-def} in 
Subsection \ref{enhanced-def-sec}) and a set of \emph{strong recurrence} (see Definition
\ref{rec-def} in Subsection
\ref{def-rec-sec}).
\item
Let $P$ be the set of prime numbers. For any finite family $f_{1}, f_{2}, \ldots f_{d}$ 
of polynomials with integer coefficients and with zero constant terms the set $\{f_{1}(p-1),f_{2}(p-1),\ldots,f_{d}(p-1)\,:
\,p\in P\}$ is a vdC set in $\Z^d$. (It can also be proved that it is an enhanced 
vdC set; see below.)
\item

The Cartesian product of two vdC sets is a vdC set in the corresponding product of parameters space.
\item
A subset $D$ of $\Z$ is a vdC set if and only if
        any positive measure $\sigma$ on the torus $\T$ such
        that
$
\sum_{d\in D} \left|\widehat\sigma(d)\right|<+\infty
$  is continuous.
\item
We establish a \emph{generalized van der Corput inequality} for multiparameter sequences in a Hilbert space (Proposition \ref{gen.vdC.ineq.hilb}).
\end{itemize}

In order to make the paper more readable we will restrict discussion 
mainly to dimension $d=2$. The reader should have no problem verifying that our proofs 
work for general $d\in\N$.

In Section \ref{enhancedvdcset}, we introduce the notion of ``enhanced vdC set''. 
We show that the enhanced vdC property
is equivalent to the FC$^+$ property (which appears in \cite{K-MF}, with a reference to
Y. Katznelson). Moreover, the enhanced vdC property is related to the notion of strong recurrence in the same way as vdC sets are related to sets of recurrence. In Subsection \ref{enhancedquestions} we collect some natural open questions.

In Section \ref{vdc-recur} we discuss links between recurrence and vdC properties. We also 
introduce and discuss the notions of \emph{density vdC set} and \emph{nice vdC set}.

In Section \ref{dist-notion} we briefly discuss some modifications of the notion of vdC set which are connected to various notions of uniform distribution.
\medbreak
It is worth mentioning that in practically 
every paper in the area of Ergodic Ramsey Theory, some version of the van der Corput trick 
for sequences in Hilbert spaces is used. See for example 
\cite{FKO}, \cite{BL1}, \cite{BLM}, \cite{vitaly-randall}, \cite{flw} dealing with 
multiple recurrence, and \cite{pet}, \cite{BL2}, \cite{host-kra}, \cite{tamar} and
\cite{sacha-pol} dealing with mean convergence of multiple ergodic averages. The van der 
Corput trick is also useful in establishing results pertaining to pointwise convergence : see for example \cite{lesigne} and \cite{nikos}.  

\medbreak

The influence on our work of the above-mentioned paper of Kamae and Mend\`es France, and 
of the fundamental ideas developed by Ruzsa in \cite{R}, cannot be exaggerated. We are 
especially grateful to Randall McCutcheon for numerous useful suggestions, and would like
to thank Inger H\aa land-Knutson, Anthony Quas and M\'at\'e Wierdl for pertinent
communications.

\medbreak
Throughout the paper, we will use the classical notation $e(t):=e^{2\pi it}$ 
for $t\in\R$ or $t\in\T=\R/\Z$.

\section{Van der Corput sets in $\Z^{d}$}

In this section we develop a theory of van der Corput sets in the
multidimensional lattice $\Z^{d}$, which is parallel to the known theory in
$\Z$ (see \cite{K-MF}, \cite{R}, \cite{Mont}). As we have already mentioned in the 
introduction, we limit our presentation to the
case $d=2$. Definitions, results and arguments in this section follow the one
dimensional case, except at one point : in order to obtain a
\emph{generalized van der Corput inequality}, Ruzsa uses in \cite{R} a theorem of
Fejer stating that any positive trigonometric polynomial in one
variable is the square modulus of another trigonometric polynomial ;
this fact is no longer true for trigonometric polynomial of several
variables, hence we are forced to use a different 
argument to derive the generalized van
der Corput inequality in the multidimensional case (cf. Subsection
\ref{inequality}).

\subsection{Van der Corput's inequality and van der Corput's principle}
\subsubsection{Van der Corput's inequality in $\Z^{2}$}{\ }\\
For $a,b,c,d\in\Z$, we write $(a,b)\leq(c,d)$ if $a\leq c$ and $b\leq d$.
(Similarly for $<$, $\geq$ and $>$.) We write $0$ for $(0,0)\in\Z^2$.

\begin{theorem}\label{premier}
Let $N=(N_{1},N_{2})\in\N^{2}$, and $(u_{n})_{0< n\leq N}$ be a finite
family of complex numbers indexed by
$\left([1,N_{1}]\times[1,N_{2}]\right)\cap\Z^{2}$.

For $ h=(h_{1},h_{2})\in \Z^{2}$, we define
$$
\gamma(N,h):=\sum_{\begin{subarray}{1}0< n\leq N\\0< n+h\leq N\end{subarray}}
u_{n+h}\cdot\overline{u_{n}}\;.
$$

For any $H=(H_{1},H_{2})\in\N^{2}$, we have
$$
\Big|\sum_{0< n\leq N}u_{n}\Big|^{2}\leq
\frac{(N_{1}+H_{1})(N_{2}+H_{2})}{H_{1}^{2}H_{2}^{2}}
\sum_{-H<h<H}(H_{1}-|h_{1}|)(H_{2}-|h_{2}|)\gamma(N,h)\;.
$$
\end{theorem}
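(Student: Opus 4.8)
The plan is to mimic the classical one-dimensional argument, which rests on nothing more than a reindexing of shifts followed by the Cauchy--Schwarz inequality; unlike the \emph{generalized} van der Corput inequality discussed in Subsection~\ref{inequality}, the present basic inequality requires no appeal to Fej\'er's theorem, so the passage from $\Z$ to $\Z^2$ is completely straightforward and proceeds simply by exploiting the product structure of the box $[0,H_1)\times[0,H_2)$ of shifts.

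First I would extend the family by setting $u_n:=0$ for every $n\in\Z^2\setminus\left([1,N_1]\times[1,N_2]\right)$, so that all sums may be taken over $\Z^2$ and indices may be shifted freely. Writing $0\le j<H$ as shorthand for $0\le j_1\le H_1-1$ and $0\le j_2\le H_2-1$, the key identity is obtained by replacing $n$ by $n+j$ in each of the $H_1H_2$ copies of $\sum_{0<n\le N}u_n$:
\[
H_1H_2\sum_{0<n\le N}u_n=\sum_{0\le j<H}\ \sum_{n\in\Z^2}u_{n+j}=\sum_{n\in\Z^2}\ \Big(\sum_{0\le j<H}u_{n+j}\Big).
\]
The inner block sum $v_n:=\sum_{0\le j<H}u_{n+j}$ vanishes unless $n+j$ lies in the box for some admissible $j$, which forces $n_1\in[2-H_1,N_1]$ and $n_2\in[2-H_2,N_2]$; hence $v_n$ is supported on at most $(N_1+H_1-1)(N_2+H_2-1)\le(N_1+H_1)(N_2+H_2)$ points. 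Applying Cauchy--Schwarz to $\sum_n v_n$ over this support gives
\[
\Big|H_1H_2\sum_{0<n\le N}u_n\Big|^2\le(N_1+H_1)(N_2+H_2)\sum_{n\in\Z^2}|v_n|^2.
\]

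It then remains to expand $\sum_n|v_n|^2=\sum_{0\le j,k<H}\sum_{n}u_{n+j}\overline{u_{n+k}}$ and to collect terms according to the difference $h=j-k$. For fixed $h$, shifting the summation index shows $\sum_n u_{n+j}\overline{u_{n+k}}=\gamma(N,h)$, while the number of pairs $(j,k)$ with $0\le j,k<H$ and $j-k=h$ factors as $(H_1-|h_1|)(H_2-|h_2|)$, this being nonzero exactly when $-H<h<H$. The main point --- really the only bookkeeping that needs care --- is precisely this counting: the product form of the admissible range of $j$ makes both the support bound and the weight $(H_1-|h_1|)(H_2-|h_2|)$ tensorize over the two coordinates, which is what produces the stated constant $\frac{(N_1+H_1)(N_2+H_2)}{H_1^2H_2^2}$ after dividing by $H_1^2H_2^2$. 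Substituting the resulting identity $\sum_n|v_n|^2=\sum_{-H<h<H}(H_1-|h_1|)(H_2-|h_2|)\gamma(N,h)$ into the Cauchy--Schwarz bound yields the theorem.
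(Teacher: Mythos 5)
Your proof is correct, and it is essentially the paper's own argument: the paper derives Theorem \ref{premier} as a special case of the abstract computation in Subsection \ref{abstract} (average over the $H_1H_2$ shifts, exchange the order of summation, apply Cauchy's inequality over the support of size $(N_1+H_1-1)(N_2+H_2-1)$, then expand the square and collect terms by the difference $h=j-k$), which is exactly what you do. The only difference is that you carry out the specialization explicitly, keeping the exact pair count $(H_1-|h_1|)(H_2-|h_2|)$ rather than the cruder bound used in the abstract group-theoretic statement, which is precisely what yields the stated Fej\'er-type weights.
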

The preceding inequality is usually used in the following form
\begin{equation}\label{vdci}
\Big|\sum_{0<n\leq N}u_{n}\Big|^{2}\leq
\frac{(N_{1}+H_{1})(N_{2}+H_{2})}{H_{1}H_{2}}
\sum_{-H<h<H}\left|\gamma(N,h)\right|\;.
\end{equation}
(The proof of Theorem \ref{premier} is an elementary application of Cauchy's inequality. It is a particular case of the calculations presented in Subsection \ref{abstract}.)
\subsubsection{Van der Corput's principle in $\Z^{2}$}{\ }
Let $(u_{n})_{n\in\N^{2}}$ be a family of complex numbers. Starting from inequality (\ref{vdci}), dividing by $(N_1N_2)^2$, then letting $N_1$ and $N_2$ go to infinity, we obtain that, for any $H\in\N^2$,
$$
\limsup_{N_{1},N_{2}\to+\infty}\Big|\frac1{N_1N_2}\sum_{0<n\leq N}u_{n}\Big|^{2}\leq\frac1{H_{1}H_{2}}\sum_{-H<h<H}
\left(\limsup_{N_{1},N_{2}\to+\infty}
\frac1{N_{1}N_{2}}|\gamma(N,h)|\right)\;.
$$
As a direct consequence we obtain the following proposition.
\begin{proposition}\label{vdc-p}
If $(u_{n})_{n\in\N^{2}}$ is a family of complex numbers such that
$$
\inf_{H>0}\frac1{H_{1}H_{2}}\sum_{-H<h<H}
\left(\limsup_{N_{1},N_{2}\to+\infty}
\frac1{N_{1}N_{2}}|\gamma(N,h)|\right)=0
$$
then
$$
\lim_{N_{1},N_{2}\to+\infty}\frac1{N_{1}N_{2}}\sum_{0<n\leq N}u_{n}=0\;.
$$
\end{proposition}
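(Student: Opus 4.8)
The plan is to feed the hypothesis directly into the inequality displayed just above the proposition, which was obtained from van der Corput's inequality and which holds for every fixed $H=(H_1,H_2)\in\N^2$ with $H>0$:
$$
\limsup_{N_1,N_2\to+\infty}\Big|\frac1{N_1N_2}\sum_{0<n\leq N}u_n\Big|^2 \leq \frac1{H_1H_2}\sum_{-H<h<H}\Big(\limsup_{N_1,N_2\to+\infty}\frac1{N_1N_2}|\gamma(N,h)|\Big).
$$
The key observation I would use is that the left-hand side is a single number that does not depend on $H$. I may therefore take the infimum over all $H>0$ on the right-hand side without altering the left-hand side, which gives
$$
\limsup_{N_1,N_2\to+\infty}\Big|\frac1{N_1N_2}\sum_{0<n\leq N}u_n\Big|^2 \leq \inf_{H>0}\frac1{H_1H_2}\sum_{-H<h<H}\Big(\limsup_{N_1,N_2\to+\infty}\frac1{N_1N_2}|\gamma(N,h)|\Big).
$$
By the assumption of the proposition the right-hand side equals $0$. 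Since a $\limsup$ of nonnegative quantities that is bounded above by $0$ must itself be $0$, the $\limsup$ of the squared Ces\`aro average vanishes, and hence so does $\limsup_{N_1,N_2\to+\infty}\big|\frac1{N_1N_2}\sum_{0<n\leq N}u_n\big|$. This is exactly the statement that the Ces\`aro average converges to $0$, which is the desired conclusion.

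The only step requiring any care --- and presumably the reason the displayed inequality is recorded separately before the proposition --- is the derivation of that display from \eqref{vdci}. After dividing \eqref{vdci} by $(N_1N_2)^2$, the prefactor becomes $\frac{(N_1+H_1)(N_2+H_2)}{N_1N_2}\cdot\frac1{H_1H_2}$, and for $H$ held fixed the factor $\frac{(N_1+H_1)(N_2+H_2)}{N_1N_2}$ tends to $1$ as $N_1,N_2\to+\infty$. Because the sum over $-H<h<H$ has only finitely many terms for fixed $H$, the $\limsup$ of the right-hand side is at most the sum of the termwise limit superiors, which produces the display. I expect no genuine obstacle here: the argument is essentially a bookkeeping of the asymptotics of the prefactor together with the finiteness of the inner sum, so that the proposition is indeed, as the text asserts, a direct consequence of van der Corput's inequality.
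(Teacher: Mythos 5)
Your proposal is correct and follows exactly the paper's own route: the authors likewise divide inequality \eqref{vdci} by $(N_1N_2)^2$, let $N_1,N_2\to+\infty$ for fixed $H$ (using that the prefactor tends to $1$ and that the sum over $-H<h<H$ is finite) to get the displayed inequality, and then observe that the proposition follows by taking the infimum over $H$ on the right-hand side. There is nothing to add; your bookkeeping of the prefactor asymptotics and the termwise limsup is precisely the content the paper leaves implicit.
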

We use the following notion of uniform distribution for a family
indexed by $\N^{2}$.
\begin{definition} A family $(x_{n})_{n\in\N^{2}}$ of real numbers
        is uniformly
distributed mod 1 if for any continuous function $f$ on $\R$,
invariant under translations by elements of $\Z$, we have
\begin{equation}\label{moyennes}
\lim_{N_{1},N_{2}\to+\infty}\frac1{N_{1}N_{2}}\sum_{0<n\leq N}
f(x_{n}) =\int_{[0,1]}f(t)\ \mbox{d}t\;.
\end{equation}
\end{definition}
Other useful notions of uniform distribution can be introduced : for example, one can 
replace in (\ref{moyennes}) the averages $\left(\frac1{N_{1}N_{2}}\sum_{0<n\leq N}
\ldots\right)_{N_1,N_2\to+\infty}$ by \\$\left(\frac1{(N_{1}-M_1)(N_{2}-M_2)}
\sum_{M\leq n<N}\ldots\right)_{N_1-M_1,N_2-M_2\to+\infty}$;
this leads to the notion of \emph{well distributed sequences}. Or, one can consider 
averages defined by a given F\o lner sequence. We postpone remarks on these variations 
to Section~\ref{dist-notion}.

Note that since property (\ref{moyennes}) has an asymptotic nature, it makes sense
even if the entries in the sequence $(x_n)$ are defined only for
indices $n = (n_1,n_2)$ for $n_1, n_2$ large enough. We tacitly
utilize this observation in the formulation of Corollary \ref{standard vdC}
below and throughout the paper.

Let us recall the classical Weyl's criterion for uniform distribution (see \cite{weyl}, \cite{KN}). A family $(x_{n})_{n\in\N^{2}}$ of real numbers
        is u.d. mod $1$ if and only if,
        for any $k\in\Z\setminus\{0\}$,
        $$
        \lim_{N_{1},N_{2}\to+\infty}\frac1{N_{1}N_{2}}\sum_{0<n\leq N}
e(kx_{n}) =0\;.
$$

As in dimension 1, van der Corput's principle in $\Z^d$ has a useful corollary pertaining to uniform distribution.
\begin{corollary}\label{standard vdC}
Let $(x_{n})_{n\in\N^{2}}$ be a family of real numbers. If for any
$h\in\Z^{2}\setminus\{0\}$ the family $(x_{n+h}-x_{n})_{n\in\N^{2}}$
is u.d.~mod $1$, then the family $(x_{n})_{n\in\N^{2}}$
is u.d.~mod $1$.
\end{corollary}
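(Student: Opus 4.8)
The plan is to derive Corollary~\ref{standard vdC} directly from the van der Corput principle (Proposition~\ref{vdc-p}) via Weyl's criterion. To show that $(x_n)_{n\in\N^2}$ is u.d.\ mod~$1$, by Weyl's criterion it suffices to prove that for each fixed $k\in\Z\setminus\{0\}$ one has $\frac{1}{N_1N_2}\sum_{0<n\leq N}e(kx_n)\to 0$ as $N_1,N_2\to+\infty$. So I would fix such a $k$ and set $u_n:=e(kx_n)$. This is a bounded family of complex numbers (each $|u_n|=1$), and the goal becomes exactly the conclusion of Proposition~\ref{vdc-p} for this particular $u$.

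The key computation is to identify the correlations $\gamma(N,h)$ for this choice of $u$. For $h\in\Z^2$ we have
\begin{equation*}
\gamma(N,h)=\sum_{\begin{subarray}{1}0<n\leq N\\0<n+h\leq N\end{subarray}}u_{n+h}\overline{u_n}=\sum_{\begin{subarray}{1}0<n\leq N\\0<n+h\leq N\end{subarray}}e\bigl(k(x_{n+h}-x_n)\bigr),
\end{equation*}
so that $\frac{1}{N_1N_2}\gamma(N,h)$ is, up to boundary terms coming from the restriction $0<n+h\leq N$, a Weyl average of the difference family $(x_{n+h}-x_n)_{n\in\N^2}$. The hypothesis says precisely that for each fixed $h\neq 0$ this difference family is u.d.\ mod~$1$, hence by Weyl's criterion (applied with the same nonzero integer $k$) we get $\frac{1}{N_1N_2}\gamma(N,h)\to 0$ as $N_1,N_2\to+\infty$; one must check that the missing boundary indices contribute a vanishing fraction, which is routine since their number is $O(|h_1|N_2+|h_2|N_1)=o(N_1N_2)$ for $h$ fixed.

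With this in hand, I would verify the hypothesis of Proposition~\ref{vdc-p}. For fixed $H\in\N^2$ the sum $\sum_{-H<h<H}\bigl(\limsup_{N_1,N_2}\frac{1}{N_1N_2}|\gamma(N,h)|\bigr)$ splits into the term $h=0$ and the terms with $h\neq0$. For $h=0$ the summand is $\limsup\frac{1}{N_1N_2}|\gamma(N,0)|=\limsup\frac{1}{N_1N_2}\sum_{0<n\leq N}|u_n|^2=1$, while by the previous paragraph every term with $h\neq0$ vanishes. Therefore
\begin{equation*}
\frac{1}{H_1H_2}\sum_{-H<h<H}\Bigl(\limsup_{N_1,N_2\to+\infty}\frac{1}{N_1N_2}|\gamma(N,h)|\Bigr)=\frac{1}{H_1H_2},
\end{equation*}
and taking the infimum over $H>0$ gives $0$. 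Proposition~\ref{vdc-p} then yields $\frac{1}{N_1N_2}\sum_{0<n\leq N}u_n\to 0$, i.e.\ $\frac{1}{N_1N_2}\sum_{0<n\leq N}e(kx_n)\to0$, which is what Weyl's criterion requires.

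The argument is essentially formal once the correlations are correctly identified, so there is no deep obstacle; the only point demanding a little care is the boundary-term estimate showing that the truncated correlation sum $\gamma(N,h)$ has the same normalized limit as the genuine Weyl average of $(x_{n+h}-x_n)$. For $h$ fixed this is harmless, but it is the one place where the two-dimensional index set makes the bookkeeping slightly less immediate than in dimension one, since one must control the discarded strips along both coordinate directions.
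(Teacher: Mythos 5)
Your proof is correct and follows essentially the same route the paper intends: apply Proposition~\ref{vdc-p} to $u_n=e(kx_n)$ for each fixed $k\neq0$, note that the hypothesis kills every correlation term with $h\neq0$ (modulo the routine boundary estimate), so the averaged bound reduces to $\frac{1}{H_1H_2}$, whose infimum is $0$, and conclude via Weyl's criterion. The paper's only additional remark is that in this argument it already suffices to let just one of $H_1,H_2$ tend to infinity, an observation motivating its Corollary~\ref{firstexample}.
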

When we apply Proposition \ref{vdc-p} in order to prove 
Corollary \ref{standard vdC}, we see that it is sufficient to let only one of 
$H_{1},H_{2}$ go to
infinity. The following definition will allow us to give a more
general version of this corollary.

Let $D$ be a subset of $\Z^{2}$. We define
$$
\delta(D):=\sup_{H_{1},H_{2}\geq0}
\frac1{(2H_{1}+1)(2H_{2}+1)}\mbox{ card}
\left(D\cap[-H_{1},H_{1}]\times[-H_{2},H_{2}]\right)\;.
$$
(Note that $\delta(D)$ is \emph{not} the ordinary notion of density, which corresponds to\\ $\limsup_{\min\{H_{1},H_{2}\}\to+\infty}$.)
\begin{corollary}\label{firstexample}
Let $(x_{n})_{n\in\N^{2}}$ be a family of real numbers, and
$D\subset\Z^{2}\setminus\{0\}$. If $\delta(D)=1$ and if, for any $d\in D$, the
family $(x_{n+d}-x_{n})$ is u.d. mod 1, then
the family $(x_{n})$ is u.d. mod 1.
\end{corollary}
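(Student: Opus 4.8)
The plan is to verify Weyl's criterion for $(x_n)$. Fixing an arbitrary $k\in\Z\setminus\{0\}$, I set $u_n:=e(kx_n)$, a bounded (indeed unimodular) family, and aim to show that its Ces\`aro average over $0<n\le N$ tends to $0$ as $N_1,N_2\to+\infty$. For this I would invoke Proposition \ref{vdc-p}, so everything reduces to estimating, for each $h\in\Z^2$, the quantity $L(h):=\limsup_{N_1,N_2\to+\infty}\frac1{N_1N_2}|\gamma(N,h)|$ and checking that $\inf_{H>0}\frac1{H_1H_2}\sum_{-H<h<H}L(h)=0$.

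Next I would compute $L(h)$ for the two relevant types of $h$. For $h=d\in D$, the key observation is that $u_{n+d}\overline{u_n}=e\bigl(k(x_{n+d}-x_n)\bigr)$; since $(x_{n+d}-x_n)_{n\in\N^2}$ is u.d.\ mod $1$, Weyl's criterion gives $\frac1{N_1N_2}\sum_{0<n\le N}e(k(x_{n+d}-x_n))\to0$. The sum defining $\gamma(N,d)$ differs from this average only through the terms on which the extra constraint $0<n+d\le N$ fails; these lie in boundary strips of total cardinality $O\bigl((|d_1|+|d_2|)(N_1+N_2)\bigr)=o(N_1N_2)$, and as $|u_{n+d}\overline{u_n}|=1$ they are negligible. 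Hence $L(d)=0$ for every $d\in D$. For an arbitrary $h$, the trivial bound $|\gamma(N,h)|\le N_1N_2$ (the number of summands, each of modulus $1$) yields $L(h)\le1$.

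It then remains to exploit $\delta(D)=1$. Writing the index set $\{-H<h<H\}$ as the closed box $[-K_1,K_1]\times[-K_2,K_2]\cap\Z^2$ with $K_i=H_i-1$, and discarding the vanishing contribution of the $d\in D$, I obtain $\frac1{H_1H_2}\sum_{-H<h<H}L(h)\le\frac{(2K_1+1)(2K_2+1)-\mathrm{card}\bigl(D\cap[-K_1,K_1]\times[-K_2,K_2]\bigr)}{(K_1+1)(K_2+1)}$. Given $\varepsilon>0$, the definition of $\delta$ as a \emph{supremum} lets me choose $K_1,K_2$ with $\mathrm{card}(D\cap[-K_1,K_1]\times[-K_2,K_2])>(1-\varepsilon)(2K_1+1)(2K_2+1)$, so the numerator is $<\varepsilon(2K_1+1)(2K_2+1)$; together with the elementary bound $(2K_i+1)/(K_i+1)<2$ this makes the whole expression $<4\varepsilon$. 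Since $\varepsilon$ is arbitrary the infimum is $0$, Proposition \ref{vdc-p} applies, and as $k\neq0$ was arbitrary Weyl's criterion finishes the proof.

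The only genuinely delicate points, and where I would be most careful, are bookkeeping rather than conceptual. First, one must match the non-standard density $\delta(D)$—a supremum over all boxes, not a limsup forcing $\min\{H_1,H_2\}\to+\infty$—to the $\inf_{H>0}$ appearing in Proposition \ref{vdc-p}; it is precisely this feature that makes $\delta(D)=1$ the right hypothesis and lets Corollary \ref{firstexample} strengthen Corollary \ref{standard vdC}. Second, one must reconcile the open-box index set $\{-H<h<H\}$ and the normalization $1/(H_1H_2)$ with the closed-box cardinality $(2K_1+1)(2K_2+1)$ entering $\delta$; the harmless factor-of-$4$ loss above absorbs this discrepancy. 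The boundary-strip estimate for $\gamma(N,d)$ is routine but should be stated explicitly.
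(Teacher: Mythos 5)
Your proof is correct and is essentially the paper's own argument: both reduce to Weyl's criterion with $u_n=e(kx_n)$, kill the correlations $\gamma(N,d)$ for $d\in D$ (modulo the boundary-strip estimate, which you rightly make explicit), bound the remaining $\gamma(N,h)$ trivially by $N_1N_2$, and use that $\delta(D)$ is a supremum to choose boxes where the non-$D$ points have relative density $<\varepsilon$. The only cosmetic difference is that you invoke Proposition \ref{vdc-p} as a black box, whereas the paper re-applies inequality (\ref{vdci}) directly and passes to the limsup along a sequence of near-optimal boxes $H^{(k)}$ --- the same mechanism in a slightly different packaging.
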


\begin{proof}
There exists a sequence $\left(H^{(k)}\right)$ (with $H^{(k)}
:=(H_{1}^ {(k)},H_{2}^{(k)})$) in $\left(\N\cup\{0\}\right)^{2}$
such that 
$$
\lim_{k\to+\infty}\frac1{(2H_{1}^{(k)}+1)(2H_{2}^{(k)}+1)}\mbox{ card}
\left(D\cap[-H_{1}^{(k)},H_{1}^{(k)}]\times[-H_{2}^{(k)},H_{2}^{(k)}]\right)=1\;.
$$
Let $(u_{n})_{n\in\N^2}$ be a family of complex numbers of modulus $1$ such that,
for any $d\in D$,
$$
\lim_{N_{1},N_{2}\to+\infty}\frac1{N_{1}N_{2}}\sum_{0< n\leq N}
u_{n+d}\overline{u_{n}}=0\;.
$$
For any $d\in D$, we have
$$\lim_{N_{1},N_{2}\to+\infty}\frac1{N_{1}N_{2}}\gamma(N,d)=0\;.$$
  We deduce from van der Corput's inequality that
$$
\Big|\frac1{N_{1}N_{2}}\sum_{0< n\leq N}u_{n}\Big|^{2}\leq
\frac{(N_{1}+H_{1}+1)(N_{2}+H_{2}+1)}{N_{1}N_{2}(H_{1}+1)(H_{2}+1)}
\sum_{-H\leq d\leq H}\frac1{N_{1}N_{2}}|\gamma(N,d)|\;.
$$
Using the fact that $|\gamma(N,d)|\leq N_{1}N_{2}$, we obtain
\begin{multline*}
\limsup_{N_{1},N_{2}\to+\infty}
\Big|\frac1{N_{1}N_{2}}\sum_{0<n\leq N}u_{n}\Big|^{2}\\
\leq
\frac1{(H_{1}+1)(H_{2}+1)}
\mbox{card}\left(D^c\cap[-H_{1},H_{1}]\times[-H_{2},H_{2}]\right)\;.
\end{multline*}
The right hand side of the last inequality goes to zero along the sequence $\left(H^{(k)}\right)$. This argument can be applied to $u_n=e(kx_n)$ (no matter how $x_n$ is defined for $n\in\Z^2\setminus\N^2$) for any choice of $k\in\Z,\,k\neq0$.
Thus, the result follows from Weyl's criterion.
\end{proof}

\begin{exmple*} If, for any positive integer $j$, the family
$(x_{n+(j,0)}-x_{n})$ is u.d.~mod 1, then the family $(x_{n})$ is
u.d.~mod $1$. 
\end{exmple*}
\begin{exmple*} 
The first application of van der Corput's inequality was to Weyl's equidistribution theorem for polynomial sequences (\cite{weyl}, \cite{vdc}). The two-parameter version of this theorem says the following : if $P\in\R[X,Y]$ is a real polynomial in two variables and if at least one coefficient of a non constant monomial in $P$ is irrational, then the family $\left(P(n_1,n_2)\right)_{(n_1,n_2)\in\N^2}$ is uniformly distributed mod 1. (This result has a straightforward generalization to polynomials in more than two variables.) This multiparameter equidistribution theorem is a direct consequence of either Corollary \ref{standard vdC}, or Corollary \ref{firstexample} applied to sets $D={0}\times\N$ and $D=\N\times{0}$.
\end{exmple*}

\subsubsection{An abstract version of van der Corput's principle}{\ }
\label{abstract}
\begin{proposition}
Let $(G,\cdot)$ be a group, and $E$, $D$ two finite subsets of $G$. Let $u$
be a complex-valued function defined on $E$. We have
\begin{equation}\label{vdca}
\left|\sum_{n\in E}u(n)\right|^{2}\leq \frac{|E\cdot D^{-1}|}{|D|}\sum_{d\in
D\cdot D^{-1}}\
\left|\sum_{\begin{subarray}{1}n\in E\\n\in E\cdot d^{-1}\end{subarray}}
u(n\cdot d)\overline{u(n)}\right|\;.
\end{equation}
\end{proposition}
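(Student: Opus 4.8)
The plan is to reduce everything to the Cauchy--Schwarz inequality after a suitable averaging trick, exactly as in the one-dimensional van der Corput inequality. First I would extend $u$ to all of $G$ by setting $u(n)=0$ for $n\notin E$; since $u$ then has finite support, every sum below is finite and I may freely sum over all of $G$. The key preliminary observation is that for each fixed $d\in D$ the right translation $n\mapsto n\cdot d$ is a bijection of $G$, so $\sum_{n\in G}u(n\cdot d)=\sum_{n\in E}u(n)$. Averaging this identity over $d\in D$ gives
\begin{equation*}
|D|\sum_{n\in E} u(n)=\sum_{d\in D}\sum_{n\in G} u(n\cdot d)=\sum_{n\in G}\Big(\sum_{d\in D} u(n\cdot d)\Big),
\end{equation*}
and since $u(n\cdot d)\neq 0$ forces $n\cdot d\in E$, i.e.\ $n\in E\cdot d^{-1}$, the outer sum is in fact supported on $\bigcup_{d\in D} E\cdot d^{-1}=E\cdot D^{-1}$.

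Next I would apply Cauchy--Schwarz to the sum over the $|E\cdot D^{-1}|$ points of this support, with the function $f(n):=\sum_{d\in D} u(n\cdot d)$:
\begin{equation*}
\Big||D|\sum_{n\in E}u(n)\Big|^2\le |E\cdot D^{-1}|\sum_{n\in E\cdot D^{-1}}|f(n)|^2=|E\cdot D^{-1}|\sum_{n\in G}\sum_{d_1,d_2\in D} u(n\cdot d_1)\overline{u(n\cdot d_2)},
\end{equation*}
where in the last step I expanded $|f(n)|^2$ and enlarged the range back to all of $G$ (harmless, since the summand vanishes off the support).

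The heart of the matter is then re-indexing this double sum. For fixed $d_1,d_2$ the substitution $m=n\cdot d_2$ turns $\sum_{n\in G} u(n\cdot d_1)\overline{u(n\cdot d_2)}$ into $\sum_{m\in G} u(m\cdot d_2^{-1}d_1)\overline{u(m)}$, which is exactly the inner expression in the statement for the group element $d=d_2^{-1}d_1$ (equal to $d_1d_2^{-1}$, and lying in $D\cdot D^{-1}$, once $G$ is abelian, as in our $\Z^d$ applications). Grouping the pairs $(d_1,d_2)$ according to the value of this element, and noting that each element of the difference set is produced by at most $|D|$ such pairs, the triangle inequality yields
\begin{equation*}
\sum_{n\in G}\sum_{d_1,d_2\in D} u(n\cdot d_1)\overline{u(n\cdot d_2)}\le |D|\sum_{d\in D\cdot D^{-1}}\Big|\sum_{\substack{n\in E\\ n\in E\cdot d^{-1}}} u(n\cdot d)\overline{u(n)}\Big|.
\end{equation*}
Combining this with the previous display and dividing through by $|D|^2$ gives the assertion. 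The main obstacle is purely bookkeeping about constants: the Cauchy--Schwarz step contributes the factor $|E\cdot D^{-1}|$, the multiplicity count for the difference set contributes one factor $|D|$, and these must be balanced against the $|D|^2$ produced by the initial averaging so that precisely $|E\cdot D^{-1}|/|D|$ survives. One should also keep track of the (in the abelian case immaterial) distinction between $D^{-1}\cdot D$ and $D\cdot D^{-1}$ arising from the side on which one translates.
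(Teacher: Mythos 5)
Your proof is correct and follows essentially the same route as the paper's: extend $u$ by zero off $E$, write the sum as an average over right $D$-translates supported on $E\cdot D^{-1}$, apply Cauchy--Schwarz over that support, expand the square, and re-index the cross terms as correlations along the difference set with multiplicity bound $|D|$, then divide by $|D|^2$. Your closing caveat about $D^{-1}\cdot D$ versus $D\cdot D^{-1}$ is well taken: the paper's own proof performs the same right-translation substitution (which actually produces $d'^{-1}\cdot D$ rather than the $D\cdot d'^{-1}$ it writes), so the stated inequality with $D\cdot D^{-1}$ is, strictly speaking, accurate as written only for abelian $G$ --- the case of interest, $\Z^{d}$ --- exactly as you observe.
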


\begin{proof}
Define $u(n)$ to be zero if $n\notin E$. We have
$$
\left|\sum_{n\in E}u(n)\right|^{2}=\left|\frac1{|D|}\sum_{d\in D}\
\sum_{n\in E\cdot d^{-1}}u(n\cdot d)\right|^{2}=\left|\frac1{|D|}
\sum_{n\in E\cdot D^{-1}}\ \sum_{d\in D}u(n\cdot d)\right|^{2}\;.
$$
Using Cauchy's inequality, we obtain
$$
\left|\sum_{n\in E}u(n)\right|^{2}\leq\frac{|E\cdot D^{-1}|}{|D|^{2}}
\sum_{n\in G}\left|\sum_{d\in D}u(n\cdot d)\right|^{2}\;,
$$
and this last expression is equal to
\begin{multline*}
\frac{|E\cdot D^{-1}|}{|D|^{2}}\sum_{d,d'\in D}\ \sum_{n\in G}
u(n\cdot d)\overline{u(n\cdot d')}\\=
\frac{|E\cdot D^{-1}|}{|D|^{2}}\sum_{d'\in D}\ \sum_{d\in D\cdot {d'}^{-1}}\
\sum_{n\in G}u(n\cdot d)\overline{u(n)}\\
\leq\frac{|E\cdot D^{-1}|}{|D|^{2}}\sum_{d'\in D}\ \sum_{d\in D\cdot
D^{-1}}\
\left|\sum_{n\in G}u(n\cdot d)\overline{u(n)}\right|\;.
\end{multline*}

\end{proof}
Note that inequality (\ref{vdca}) contains inequality (\ref{vdci}) as a special case corresponding to \\$G=\Z^2$, $E=\left([1,N_1]\times[1,N_2]\right)\cap\Z^2$ and $D=\left([1,H_1]\times[1,H_2]\right)\cap\Z^2$.

\begin{remark}
The vdC inequality that has been stated above for a family of complex numbers can be
extended verbatim to any family of vectors in a linear complex space equipped with a
scalar product. This fact is very useful in many applications to mean convergence theorems
or recurrence theorems in Ergodic Theory (see for example Lemma A6 and the references in \cite{vitaly-randall}).
\end{remark}
\subsection{Van der Corput sets}\label{vdc-sec}
\subsubsection{Definition}\label{definition}
\begin{definition}\label{vdc-def}
A subset $D$ of $\Z^{2}\setminus\{0\}$ is a {\em van der Corput set}
(vdC-set)
if for any family $(u_{n})_{n\in\Z^2}$ of complex numbers of modulus
1 such that
$$
\forall d\in D,\quad\lim_{N_{1},N_{2}\to+\infty}
\frac1{N_{1}N_{2}}\sum_{0\leq n<(N_{1},N_{2})}u_{n+d}\overline{u_{n}}=0
$$
we have
\begin{equation}\label{Ces_dim2}
\lim_{N_{1},N_{2}\to+\infty} \frac1{N_{1}N_{2}}\sum_{0\leq
n<(N_{1},N_{2})}u_{n}=0\;.
\end{equation}
\end{definition}
Equivalently, $D$ is a vdC-set if any family 
$(x_{n})_{n\in\N^{2}}$ of real numbers having the property that for all $d\in D$ the family
$(x_{n+d}-x_{n})_{n\in\N^2}$ is u.d. mod 1, is itself u.d. mod 1.

(A natural Ces\`aro summation method is also given by ``bilateral averages''. One 
obtains an equivalent definition of vdC set if we replace in Definition \ref{vdc-def} sums $\sum_{0\leq
n<(N_{1},N_{2})}$ by sums $\sum_{(-N_1,-N_2)<
n<(N_{1},N_{2})}$. See Section \ref{dist-notion}.)

\begin{exmple}If $\delta(D)=1$, the set $D$ is a vdC-set (see Corollary
\ref{firstexample}).
\end{exmple}
Note that various modifications of the notion of uniform distribution (for example, considering other types of averages) lead, generally speaking, to different notions of vdC set. See Section \ref{dist-notion} for some remarks and open questions.

\subsubsection{Spectral characterization}\label{spectral-char1}
If $\sigma$ is a finite measure on the 2-torus $\T^2$, we define its Fourier transform $\widehat\sigma$ by $\widehat\sigma(n)=\int_{\T^2}e(n_1x_1+n_2x_2)\,\text{d}\sigma(x_1,x_2)$, for any $n=(n_1,n_2)\in\Z^2$.
\begin{theorem}\label{spectralchar}
Let $D\subset \Z^{2}\setminus\{0\}$. The following statements are equivalent
\begin{enumerate}
        \item[(S1)]
        $D$ is a van der Corput set.
        \item[(S2)]
        If $\sigma$ is a positive measure on the 2-torus $\T^{2}$ such
        that, for all $d\in D$, $\widehat\sigma(d)=0$, then
        $\sigma\left(\{(0,0)\}\right)=0$.
        \item[(S3)]
        If $\sigma$ is a positive measure on the 2-torus $\T^{2}$ such
        that, for all $d\in D$, $\widehat\sigma(d)=0$, then $\sigma$ is
        continuous.
        \end{enumerate}
\end{theorem}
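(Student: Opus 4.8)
The plan is to establish the cycle by treating the easy equivalence (S2)$\Leftrightarrow$(S3) first and then devoting the real effort to the two implications linking the spectral conditions to the dynamical definition (S1). The implication (S3)$\Rightarrow$(S2) is immediate, since a continuous measure in particular has no atom at the origin. For (S2)$\Rightarrow$(S3) I would argue by contraposition using the group structure of $\T^2$: if $\sigma$ is a positive measure with $\widehat\sigma(d)=0$ for all $d\in D$ but $\sigma$ has an atom at some point $\theta$, push $\sigma$ forward under the translation $x\mapsto x-\theta$ to obtain $\tau$. A one-line computation gives $\widehat\tau(d)=e(-d\cdot\theta)\,\widehat\sigma(d)$, so $\widehat\tau(d)=0$ for every $d\in D$, while $\tau(\{0\})=\sigma(\{\theta\})>0$; this contradicts (S2). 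Thus (S2) and (S3) are equivalent and it remains to link them with (S1).

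For (S2)$\Rightarrow$(S1), start from a family $(u_n)$ of modulus $1$ with $\lim_N \frac1{N_1N_2}\sum u_{n+d}\overline{u_n}=0$ for each $d\in D$. By a diagonal extraction choose an increasing sequence $(N_j)$ along which all correlations $\gamma(d)=\lim_j\frac1{|B_{N_j}|}\sum_n u_{n+d}\overline{u_n}$ exist and the averages $S_{N_j}=\frac1{|B_{N_j}|}\sum_n u_n$ converge to some $c$. The limiting sequence $\gamma$ is positive-definite (see \cite{bertrandias}), so by the Bochner--Herglotz theorem $\gamma=\widehat\sigma$ for a positive measure $\sigma$ on $\T^2$, and the hypothesis forces $\widehat\sigma(d)=0$ on $D$, whence $\sigma(\{0\})=0$ by (S2). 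The crux is then the inequality $|c|^2\le\sigma(\{0\})$. To obtain it I would introduce the local averages $A_{H,n}=\frac1{|B_H|}\sum_{\|h\|_\infty\le H}u_{n+h}$ and observe, on the one hand, that $\lim_j\frac1{|B_{N_j}|}\sum_n A_{H,n}=c$ for each fixed $H$, and on the other hand that $\lim_j\frac1{|B_{N_j}|}\sum_n|A_{H,n}|^2=\int_{\T^2}F_H\,\mathrm d\sigma$, where $F_H\ge0$ is the two-dimensional Fej\'er kernel. Since $0\le F_H\le1$ and $F_H\to\mathbf 1_{\{0\}}$ pointwise, dominated convergence gives $\int F_H\,\mathrm d\sigma\to\sigma(\{0\})$, and Cauchy--Schwarz applied to the average over $n$ yields $|c|^2\le\int F_H\,\mathrm d\sigma$; letting $H\to\infty$ gives $|c|^2\le\sigma(\{0\})=0$. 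As every convergent subsequence of the bounded sequence $(S_N)$ has limit $0$, we conclude $S_N\to0$, so $D$ is a vdC set.

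The remaining implication (S1)$\Rightarrow$(S2), argued by contraposition, is where the main obstacle lies: from a \emph{measure} violating (S2) one must manufacture a genuine scalar family of modulus $1$ violating the definition of vdC set. Suppose $\sigma\ge0$ (normalized) satisfies $\widehat\sigma(d)=0$ on $D$ and $\sigma(\{0\})=a>0$. For a finitely supported $\sigma=\sum_k a_k\delta_{\theta_k}$ the construction is explicit: tile a large box by sub-boxes of relative volume $a_k$ and set $u_n=e(n\cdot\theta_k)$ on the $k$-th sub-box. Interface terms have negligible density, so the empirical correlations converge to $\sum_k a_k e(d\cdot\theta_k)=\widehat\sigma(d)$, while the Ces\`aro mean converges to the weight carried by $\theta=0$, namely $\sigma(\{0\})=a\neq0$. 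For a general $\sigma$ I would approximate it weak-$*$ by finitely supported measures $\sigma_m$ chosen so that $\sigma_m(\{0\})=a$ and $\widehat{\sigma_m}(d)\to\widehat\sigma(d)$ for every $d$, and then concatenate the corresponding finite blocks on boxes of rapidly increasing side-lengths $N_m$.

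The delicate point in this last step is the bookkeeping for the concatenation. Because the scales grow fast, the Ces\`aro behaviour at each \emph{fixed} $d$ is governed by the tail blocks, so the correlation at $d\in D$ tends to $\lim_m\widehat{\sigma_m}(d)=\widehat\sigma(d)=0$ even though it vanishes only approximately at finite scale, while the mean tends to $a\neq0$; one must verify that the interface contributions between consecutive blocks, and the ranges of $N$ falling between block boundaries, do not spoil these limits, and that the argument handles all $d\in D$ simultaneously despite $D$ being possibly infinite (this is fine, since for fixed $d$ only large $m$ matter). This conversion of an arbitrary positive measure into a scalar unimodular sequence with exact asymptotic vanishing of correlations along $D$ is the genuinely technical heart of the theorem; by contrast, once positive-definiteness, Bochner--Herglotz and the Fej\'er computation are available, the direction (S2)$\Rightarrow$(S1) and the equivalence (S2)$\Leftrightarrow$(S3) are comparatively routine.
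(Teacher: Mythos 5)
Your handling of (S2)$\Leftrightarrow$(S3) is the paper's own argument (translation does not change $|\widehat\sigma|$), and your proof of (S2)$\Rightarrow$(S1) is correct. It differs in technique from the paper: where the paper's Lemma~\ref{class_spect} invokes the affinity (Hellinger integral) of Coquet--Kamae--Mend\`es France to compare the empirical spectral densities with the Fej\'er kernel under weak convergence, you get the same key inequality $|c|^2\le\sigma(\{(0,0)\})$ elementarily, via the local averages $A_{H,n}$, Cauchy--Schwarz, and the identity $\lim_j\frac1{|B_{N_j}|}\sum_n|A_{H,n}|^2=\int_{\T^2}F_H\,\mathrm{d}\sigma$ followed by dominated convergence. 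That is a legitimate, self-contained alternative.

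The gap is in (S1)$\Rightarrow$(S2), and it is not ``bookkeeping.'' Definition~\ref{vdc-def} requires that the sequence you exhibit satisfy, for each $d\in D$, the \emph{full} two-parameter limit $\lim_{N_1,N_2\to+\infty}\frac1{N_1N_2}\sum_{0\le n<(N_1,N_2)}u_{n+d}\overline{u_n}=0$, i.e.\ along \emph{every} family of anchored rectangles with $\min(N_1,N_2)\to+\infty$, not merely along the boxes aligned with your construction. A coarse tiling (one sub-box per atom at each scale) cannot achieve this. Concretely, suppose at scale $m+1$ the region $[0,N_{m+1})^2\setminus[0,N_m)^2$ allots to an atom $\theta_1\neq0$ a single rectangle, say a vertical strip $[N_m,x)\times[0,N_{m+1})$ of width $\approx a_1N_{m+1}$. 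Take $M_1=x$ and $M_2=N_{m+1}$: the anchored rectangle $[0,M_1)\times[0,M_2)$ has $\min(M_1,M_2)\ge N_m$, yet its volume is overwhelmingly carried by that one strip (the old square contributes $O(N_m^2)$ and the horizontal slab $O(N_mN_{m+1})$, both negligible against $a_1N_{m+1}^2$). On the strip $u_{n+d}\overline{u_n}=e(d\cdot\theta_1)$ identically, so the correlation over this rectangle has modulus close to $1$, not close to $0$. Since with finitely many rectangles per scale some atom $\theta_k\neq0$ must occupy such a dominant position for suitable $(M_1,M_2)$, the sequence you build violates the hypothesis of the vdC definition, and the contrapositive collapses. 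The same defect already occurs in dimension one for ranges of $N$ between block boundaries.

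This is exactly the difficulty that the paper's (Ruzsa's) probabilistic construction is designed to overcome: one uses \emph{many small} blocks (side $\sim 2m_i$ at position $m_i^2$, hence of size negligible compared to their position), assigns to the blocks i.i.d.\ atoms $\theta(m)$ of law $\sigma$, and invokes the multiparameter strong law of large numbers (Lemmas~\ref{lemmeprob.1} and~\ref{lemmeprob.2}) to obtain, almost surely and simultaneously for all $h\in\Z^2$, the limits $\frac1{N_1N_2}\sum_nY(n+h)\overline{Y(n)}\to\widehat\sigma(h)$ and $\frac1{N_1N_2}\sum_nY(n)\to\sigma(\{(0,0)\})$ over \emph{all} anchored rectangles. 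The randomness is what guarantees that every large rectangle sees the atoms in the correct proportions; a deterministic repair would require many small blocks with a quantitatively well-distributed assignment, which is essentially re-proving the law of large numbers. Note also that the probabilistic construction handles a general $\sigma$ directly, so no finite-support approximation or concatenation over scales is needed.
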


(Note that we prove in the sequel (Subsection \ref{new-spectral}) that (S1), (S2) and (S3) are equivalent to the following property : 
any positive measure $\sigma$ on the 2-torus $\T^2$ such
        that
$
\sum_{d\in D} \left|\widehat\sigma(d)\right|<+\infty
$  is continuous.)

The equivalence of (S2) and (S3) is clear, since a translation of a
measure does not change the modulus of its Fourier coefficients. For one dimensional space of parameters the
implication (S2)$\Rightarrow$(S1) is proved in
\cite{K-MF} and the implication  (S1)$\Rightarrow$(S2) can be found in \cite{R}.

\begin{lemma}\label{class_spect}
Let $(u_{n})_{n\in\Z^{2}}$ be a bounded family of complex numbers
and $(N^{(j)})_{j\in\N}=\big((N^{(j)}_{1},N^{(j)}_{2})\big)_{j\in\N}$ be a
sequence in $\N^{2}$ such that $\min(N^{(j)}_{1},N^{(j)}_{2})\to+\infty$ when
$j\to+\infty$. If, for all $h\in\Z^{2}$,
$$
\gamma(h):=\lim_{j\to+\infty}\frac1{N^{(j)}_{1}N^{(j)}_{2}}\sum_{0\leq
n<N^{(j)}}u_{n+h}\overline{u_{n}}\qquad\mbox{exists}\;,
$$
then there exists a positive measure $\sigma$ on the 2-torus $\T^{2}$
such that, for all $h\in\Z^{2}$,
$$
\widehat\sigma(h)=\gamma(h)
$$
and this measure satisfies
$$
\limsup_{j\to+\infty}\frac1{N^{(j)}_{1}N^{(j)}_{2}}\left|\sum_{0\leq
n<N^{(j)}}u_{n}\right|\leq\sqrt{\sigma\left(\{(0,0)\}\right)}\;.
$$
\end{lemma}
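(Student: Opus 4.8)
The statement has two parts, and I would treat them in order: first produce the measure $\sigma$ representing $\gamma$, then establish the quantitative bound on the Ces\`aro averages of $(u_n)$.

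First I would check that $\gamma$ is a positive-definite function on $\Z^2$ and invoke the Bochner--Herglotz theorem to obtain $\sigma$. To verify positive-definiteness, fix finitely many $h_1,\dots,h_m\in\Z^2$ and scalars $c_1,\dots,c_m\in\CC$ and consider the manifestly nonnegative quantity $\frac1{N^{(j)}_1N^{(j)}_2}\sum_{0\leq n<N^{(j)}}\big|\sum_i c_i u_{n+h_i}\big|^2$. Expanding the square gives $\sum_{i,k}c_i\overline{c_k}\cdot\frac1{N^{(j)}_1N^{(j)}_2}\sum_n u_{n+h_i}\overline{u_{n+h_k}}$. A change of summation variable $n\mapsto n-h_k$ shows that each inner average tends, as $j\to+\infty$, to $\gamma(h_i-h_k)$: the shift alters the summation box only on a boundary set whose cardinality is $O(N^{(j)}_1+N^{(j)}_2)$ (the constant depending on the finitely many fixed $h_i$), and this is negligible after division by $N^{(j)}_1N^{(j)}_2$ precisely because $\min(N^{(j)}_1,N^{(j)}_2)\to+\infty$. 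Passing to the limit yields $\sum_{i,k}c_i\overline{c_k}\gamma(h_i-h_k)\geq0$, so $\gamma$ is positive-definite, and Bochner--Herglotz delivers a positive measure $\sigma$ on $\T^2$ with $\widehat\sigma(h)=\gamma(h)$ for all $h$.

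For the bound, the key decision is \emph{not} to apply the van der Corput inequality with absolute values (that route only gives the limsup $\leq\frac1{H_1H_2}\sum_{-H<h<H}|\gamma(h)|$, which is too weak), but rather to preserve the positive structure. Set $S_N:=\frac1{N_1N_2}\sum_{0\leq n<N}u_n$ and, for $H\in\N^2$, introduce the box-averaged quantity $T^H_N:=\frac1{N_1N_2}\sum_{0\leq n<N}\frac1{H_1H_2}\sum_{0\leq h<H}u_{n+h}$. A boundary estimate identical to the one above shows that for each fixed $H$ one has $T^H_{N^{(j)}}-S_{N^{(j)}}\to0$, hence $\limsup_j|S_{N^{(j)}}|=\limsup_j|T^H_{N^{(j)}}|$. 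Convexity of $t\mapsto|t|^2$ (Cauchy's inequality) gives
$$
|T^H_N|^2\leq\frac1{N_1N_2}\sum_{0\leq n<N}\Big|\frac1{H_1H_2}\sum_{0\leq h<H}u_{n+h}\Big|^2 .
$$

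Expanding the inner square and letting $j\to+\infty$ along $N^{(j)}$, each correlation average converges to $\gamma(h-h')$ by the same boundary argument, so $\limsup_j|T^H_{N^{(j)}}|^2\leq\frac1{(H_1H_2)^2}\sum_{0\leq h,h'<H}\gamma(h-h')$. Substituting $\gamma(h-h')=\int_{\T^2}e\big((h_1-h'_1)x_1+(h_2-h'_2)x_2\big)\,\mathrm d\sigma$ and factoring the resulting double sum, the right-hand side equals $\int_{\T^2}\big|\frac1{H_1H_2}\sum_{0\leq h<H}e(h_1x_1+h_2x_2)\big|^2\,\mathrm d\sigma$. The integrand is the product of the two one-dimensional kernels $\big|\frac1{H_i}\sum_{h_i=0}^{H_i-1}e(h_ix_i)\big|^2$, each bounded by $1$ and tending to the indicator $\mathbf 1_{\{x_i\equiv0\}}$ as $H_i\to+\infty$; thus the integrand is dominated by $1$ and converges pointwise to $\mathbf 1_{\{(0,0)\}}$. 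By dominated convergence the integral tends to $\sigma(\{(0,0)\})$ as $H_1,H_2\to+\infty$. Since $\limsup_j|S_{N^{(j)}}|^2$ is bounded by this integral for \emph{every} $H$, letting $H_1,H_2\to+\infty$ yields $\limsup_j|S_{N^{(j)}}|^2\leq\sigma(\{(0,0)\})$, which is the asserted inequality. The main obstacle is this last step: the naive inequality is insufficient, and the gain comes from recognizing that the Fej\'er-type kernel above acts as an approximate projection onto the atom at the origin, so that its integral against $\sigma$ extracts exactly $\sigma(\{(0,0)\})$; the uniform control of boundary terms via $\min(N^{(j)}_1,N^{(j)}_2)\to+\infty$ is what makes all the limit passages legitimate.
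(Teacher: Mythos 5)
Your proof is correct, and while its first half (direct verification that $\gamma$ is positive-definite, then Bochner--Herglotz) is the same as the paper's, your derivation of the inequality takes a genuinely different route. The paper realizes $\sigma$ as the weak limit of the periodogram measures with densities $g_N(x)=\frac1{N_1N_2}\bigl|\sum_{0\leq n<N}u_n e(-n\cdot x)\bigr|^2$, observes that the analogous kernels $h_N$ built from the constant sequence converge weakly to the Dirac mass $\delta$ at the origin, and then quotes the theorem of \cite{CKMF} that the affinity (Hellinger integral) $\rho$ is upper semicontinuous along weak convergence, so that $\limsup_j\rho\bigl(g_{N^{(j)}}\,dx,h_{N^{(j)}}\,dx\bigr)\leq\rho(\sigma,\delta)=\sqrt{\sigma(\{(0,0)\})}$; combined with the Cauchy--Schwarz/Parseval bound $\frac1{N_1N_2}\bigl|\sum_{0\leq n<N}u_n\bigr|\leq\int_{\T^2}\sqrt{g_Nh_N}\,dx_1dx_2$, this yields the conclusion. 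You instead stay entirely on the sequence side: box-averaging $u$ over $0\leq h<H$, Cauchy--Schwarz, and the identity $\frac1{(H_1H_2)^2}\sum_{0\leq h,h'<H}\gamma(h-h')=\int_{\T^2}\bigl|\frac1{H_1H_2}\sum_{0\leq h<H}e(h\cdot x)\bigr|^2\,\mathrm{d}\sigma(x)$, whose right-hand side tends to $\sigma(\{(0,0)\})$ by dominated convergence because the Fej\'er-type kernel is bounded by $1$ and tends pointwise to the indicator of $\{(0,0)\}$. What each buys: the paper's argument is short modulo the quoted affinity theorem and exhibits $\sigma$ explicitly as the weak limit of the spectral measures of $(u_n)$, a fact reused conceptually elsewhere; yours is self-contained and elementary (only Bochner--Herglotz, Cauchy--Schwarz, dominated convergence), and it isolates exactly why one must not pass to absolute values of $\gamma$ --- the same phase-preserving mechanism that underlies the generalized vdC inequality (Theorem \ref{gen.vdC.ineq}). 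Both proofs ultimately rest on the same boundary-control facts you made explicit: a shift by a fixed $h$ alters the summation box by $O(N_1^{(j)}+N_2^{(j)})$ terms, which is negligible precisely because $\min(N_1^{(j)},N_2^{(j)})\to+\infty$.
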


\begin{proof}[Sketch of the proof of Lemma \ref{class_spect}]
We denote $x=(x_{1},x_{2})$, $n=(n_{1},n_{2})$, etc\ldots

The family $(\gamma_{h})_{h\in\Z^2}$ is positive-definite and the
Bochner-Herglotz Theorem guarantees the existence of the positive
measure $\sigma$ (see for example \cite{rudin}, Subsection 1.4.3). This measure is the weak limit of the sequence of
absolutely continuous measures $(\sigma_{N^{(j)}})$ where
$\sigma_{N}$ has density
$$
g_{N}(x):=\frac1{N_{1}N_{2}}\left|\sum_{0\leq n<N}
u_{n}e(-n_{1}x_{1}-n_{2}x_{2})\right|^{2}
$$
with respect to Lebesgue measure d$x_{1}$d$x_{2}$.

We define
$$
h_{N}(x):=\frac1{N_{1}N_{2}}\left|\sum_{0\leq n<N}
e(-n_{1}x_{1}-n_{2}x_{2})\right|^{2}\;.
$$
The sequence of measures with density $h_{N}$ converges weakly to the
Dirac delta measure at $(0,0)$, denoted by $\delta$.

We follow the method of \cite{CKMF}, in particular their Theorem 2, which utilizes the 
connection between the \emph{affinity}\footnote{Let $\mu$ and $\nu$ be two probability 
measures on $\T^2$. The affinity $\rho(\mu,\nu)$ is defined as
$$
\rho(\mu,\nu) = \int_{\T^2}\left(\frac{\text{d}\mu}{\text{d}m}\right)^{1/2}\left(\frac{\text{d}\nu}{\text{d}m}\right)^{1/2}\ \text{d}m\;,
$$
where $m$ is any measure with respect to which both $\mu$ and $\nu$ are absolutely 
continuous. Note that affinity is also called the \emph{Hellinger 
integral} by probabilists. It is proved in \cite{CKMF} that if $(\mu_n)$ and $(\nu_n)$ 
are two weakly convergent sequences of probability measures, 
then $$\limsup_{n\to+\infty}\rho(\mu_n,\nu_n)\leq\rho(\lim\mu_n,\lim\nu_n).$$
} of two probability measures and 
weak convergence. Denoting by $\rho(\mu,\nu)$ the affinity of two probability measures 
on $\T^2$, we have
$$\rho\left(g_N(x)\text{d}x, h_N(x)\text{d}x\right)=\int_{\T^{2}}\sqrt{g_{N}(x)h_{N}(x)}\
\mbox{d}x_{1}\mbox{d}x_{2}\;,
$$
$$
\rho(\sigma,\delta)=\sqrt{\sigma(\{(0,0)\})}\;,
$$
and
$$
\limsup_{j\to+\infty}\int_{\T^{2}}\sqrt{g_{N^{(j)}}(x)h_{N^{(j)}}(x)}\
\mbox{d}x_{1}\mbox{d}x_{2}\leq\sqrt{\sigma\left(\{(0,0)\}\right)}\;.
$$
The conclusion of the lemma then follows from the inequality
$$
\frac1{N^{(j)}_{1}N^{(j)}_{2}}\left|\sum_{0\leq
n<N^{(j)}}u_{n}\right|\leq\int_{\T^{2}}\sqrt{g_{N^{(j)}}(x)h_{N^{(j)}}(x)}\
\mbox{d}x_{1}\mbox{d}x_{2}\;.
$$
\end{proof}
\begin{proof} [Proof of Theorem \ref{spectralchar}]
Let us first prove that (S2)$\Rightarrow$(S1). Let $(u_{n})_{n\in\Z^{2}}$
be a bounded family of complex numbers such that, for all $d\in D$,
$$
\lim_{N_{1},N_{2}\to+\infty}
\frac1{N_{1}N_{2}}\sum_{0\leq
n<(N_{1},N_{2})}u_{n+d}\overline{u_{n}}=0\;.
$$
There exists a sequence $(N^{(j)})_{j\in\N}$ in $\N^{2}$ such that
\begin{itemize}\item
        $\min(N^{(j)}_{1},N^{(j)}_{2})\to+\infty$\;,
        \item
        $\displaystyle\lim_{j\to+\infty}\frac1{N^{(j)}_{1}N^{(j)}_{2}}\left|\sum_{0\leq
n<N^{(j)}}u_{n}\right|=
\limsup_{N_{1},N_{2}\to+\infty} \frac1{N_{1}N_{2}}\left|\sum_{0\leq
n<N}u_{n}\right|$\;,
\item
$\displaystyle\forall h\in \Z^2,\quad \gamma(h):=
\lim_{j\to+\infty}\frac1{N^{(j)}_{1}N^{(j)}_{2}}\sum_{0\leq
n<N^{(j)}}u_{n+h}\overline{u_{n}}\qquad\mbox{exists}$\;.
\end{itemize}
The map $\gamma$ is the Fourier transform of a positive measure
$\sigma$ on the $2$-torus. We have $\widehat\sigma(d)=0$ for all $d\in
D$. By condition (S2), the measure $\sigma$ has no point mass at $(0,0)$,
and, using Lemma \ref{class_spect}, we conclude that the family $(u_{n})_{n\in\Z^{2}}$ converges to zero in the sense of (\ref{Ces_dim2}). We have proved that $D$ is a vdC-set.

Following Ruzsa (\cite{R}), we will use a probabilistic argument in order to
prove that (S1)$\Rightarrow$(S2). The next two lemmas are routine
variations on the theme of the law of large numbers.

\begin{lemma}\label{lemmeprob.1}
        Let $(\theta(n))_{n\in\N^{2}}$ be an i.i.d. family of random
        variables with values in the 2-torus $\T^{2}$. We write
        $\theta(n)=\left(\theta_{1}(n),\theta_{2}(n)\right)$. We define a family
        of complex random variables
        $(Y(n))_{n\in\N^{2}}$ by
        $$
        Y(n_{1},n_{2}):=
  e\left(r_{1}\theta_{1}(m_{1},m_{2})+r_{2}\theta_{2}(m_{1},m_{2})\right)\;,
        $$
        if $n_{i}=m_{i}^{2}+r_{i}$, with $0\leq r_{i}\leq2m_{i}$, $i=1,2$.

        We have, almost surely,
        $$
        \lim_{N_{1},N_{2}\to+\infty}\frac1{N_{1}N_{2}}\sum_{0<
        n\leq N}Y(n) =\P\left(\theta=0\right)\;.
        $$
        \end{lemma}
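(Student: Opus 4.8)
The plan is to exploit the arithmetic of the decomposition $n_i=m_i^2+r_i$, $0\le r_i\le 2m_i$ (so $m_i=\lfloor\sqrt{n_i}\rfloor$), which groups the indices into rectangular blocks indexed by $m=(m_1,m_2)$. On the block $\{(m_1^2+r_1,m_2^2+r_2):0\le r_i\le 2m_i\}$ the definition gives $Y=e\big(r_1\theta_1(m)+r_2\theta_2(m)\big)$, so the block sum factors:
\[
S(m):=\sum_{r_1=0}^{2m_1}\sum_{r_2=0}^{2m_2}e\big(r_1\theta_1(m)+r_2\theta_2(m)\big)=s_1(m)\,s_2(m),\qquad s_i(m):=\sum_{r=0}^{2m_i}e\big(r\,\theta_i(m)\big).
\]
Each $s_i(m)$ is a geometric sum, equal to $2m_i+1$ when $\theta_i(m)=0$ and bounded by $|\sin\pi\theta_i(m)|^{-1}$ otherwise. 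The whole block is thus governed by the single variable $\theta(m)$, and the family $(\theta(m))_m$ is i.i.d.; this is the mechanism that will produce $\P(\theta=0)$ in the limit.

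First I would reduce the asserted limit over $N_1,N_2\to+\infty$ to the behaviour of the complete-block averages $A_M:=(M_1M_2)^{-2}\sum_{m\le M}S(m)$ as $M_1,M_2\to+\infty$. Since $|Y|\equiv1$, replacing the box $\{0<n\le N\}$ by the box $\{0<n\le(M_1^2,M_2^2)\}$ with $M_i=\lfloor\sqrt{N_i}\rfloor$ alters $\frac1{N_1N_2}\sum_{0<n\le N}Y$ by at most the relative cardinality of the symmetric difference, which is $O\big(N_2\sqrt{N_1}+N_1\sqrt{N_2}\big)/(N_1N_2)=o(1)$; the residual slab at $n_i=M_i^2$ is negligible for the same reason. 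Hence it suffices to prove $A_M\to\P(\theta=0)=:p$ almost surely.

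The core is a second-moment computation. For the mean, the elementary fact is that $(2m_i+1)^{-1}\sum_{r=0}^{2m_i}e(r\theta_i)$ is bounded by $1$ and converges to $\mathbf{1}_{\{\theta_i=0\}}$ as $m_i\to+\infty$; consequently $S(m)/\big((2m_1+1)(2m_2+1)\big)\to\mathbf{1}_{\{\theta(m)=0\}}$ pointwise and stays bounded by $1$, so by dominated convergence $\E[S(m)]=(2m_1+1)(2m_2+1)\,c(m)$ with $c(m)\to p$ as $\min(m_1,m_2)\to+\infty$ (no independence of the two coordinates of $\theta(m)$ is needed here). As $\sum_{m_i\le M_i}(2m_i+1)=M_i^2+2M_i$ and these weights assign vanishing relative mass to any region $\{\min(m_i)\le L\}$, a Toeplitz averaging argument gives $\E[A_M]\to p$. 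For the variance, independence across $m$ together with the crude bound $|S(m)|\le(2m_1+1)(2m_2+1)$ yields $\mathrm{Var}(A_M)\le(M_1M_2)^{-4}\sum_{m\le M}(2m_1+1)^2(2m_2+1)^2=O\big((M_1M_2)^{-1}\big)$, so $A_M\to p$ in $L^2$.

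The step needing the most care is the upgrade from $L^2$ to almost-sure convergence, uniform over the genuinely two-parameter limit. I would first fix a geometric grid $M_i^{(k_i)}=\lfloor\lambda^{k_i}\rfloor$ with $\lambda>1$; then $\sum_{k_1,k_2}\mathrm{Var}\big(A_{M^{(k)}}\big)<+\infty$, so Chebyshev and Borel--Cantelli give $A_{M^{(k)}}\to p$ almost surely along the grid. For an arbitrary $M$ squeezed between grid points I would again invoke $|S(m)|\le(2m_1+1)(2m_2+1)$ to bound $A_M$ between neighbouring grid values up to an error governed by the volume ratio, which tends to $\lambda^4-1$; letting $\lambda\downarrow1$ along a countable sequence removes this error and yields $A_M\to p$ almost surely as $M_1,M_2\to+\infty$ independently. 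Together with the boundary reduction this proves the lemma. The main obstacle is exactly this two-dimensional interpolation: there is no monotonicity of the complex block sums $S(m)$ to exploit as in the classical one-parameter strong law, so the passage from the grid to all of $\N^2$ must be driven entirely by the uniform modulus bound on the block sums.
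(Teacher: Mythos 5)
Your proof is correct. The paper in fact gives no argument for this lemma --- it is dismissed as a ``routine variation on the theme of the law of large numbers'' --- and what you have written is precisely the standard implementation of that routine: the block decomposition into products of geometric sums whose normalized block averages converge to $\mathbf{1}_{\{\theta(m)=0\}}$, the second-moment bound $\mathrm{Var}(A_M)=O\bigl(1/(M_1M_2)\bigr)$, Chebyshev and Borel--Cantelli along a lacunary grid in both parameters, and the passage to unrestricted $M_1,M_2\to+\infty$ driven by the uniform bound $|S(m)|\le(2m_1+1)(2m_2+1)$, which correctly substitutes for the monotonicity used in the classical one-parameter strong law.
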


\begin{lemma}\label{lemmeprob.2}
Let $(X(n))_{n\in\N^{2}}$ be an i.i.d. family of bounded complex
random variables. We define a new family of complex random variables
$(Z(n))_{n\in\N^{2}}$ by
$$
        Z(n_{1},n_{2}):=X(m_{1},m_{2})
        $$
        if $n_{i}=m_{i}^{2}+r_{i}$, with $0\leq r_{i}\leq2m_{i}$, $i=1,2$.

        We have, almost surely,
         $$
        \lim_{N_{1},N_{2}\to+\infty}\frac1{N_{1}N_{2}}\sum_{0<
        n\leq N}Z(n) =\E\left[X\right]\;.
        $$
        \end{lemma}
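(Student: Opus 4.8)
The plan is to read the construction as a re-indexing: writing $m_i=\lfloor\sqrt{n_i}\rfloor$, the variable $Z(n_1,n_2)$ equals $X(m_1,m_2)$ and is constant on the block $B_{m_1,m_2}=[m_1^2,m_1^2+2m_1]\times[m_2^2,m_2^2+2m_2]$, which contains exactly $(2m_1+1)(2m_2+1)$ lattice points. Thus the Ces\`aro sum of $Z$ is a weighted sum of the i.i.d.\ variables $X(m_1,m_2)$, and the lemma is a two-parameter strong law of large numbers for such weighted sums. I would first reduce to the centered case $\E[X]=0$: replacing $X$ by $X-\E[X]$ replaces $Z$ by $Z-\E[X]$, so the general statement follows from the centered one. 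Since the $X(n)$ are bounded, all their moments are finite, and by treating real and imaginary parts separately I may also assume the $X(n)$ are real.

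Second, I would decompose the partial sum. With $M_i=\lfloor\sqrt{N_i}\rfloor$, the blocks with $1\le m_i\le M_i-1$ lie entirely in $(0,N]$, while the blocks with $m_1=M_1$ or $m_2=M_2$ may be truncated. Write $\sum_{0<n\le N}Z(n)=A+B$, where $A=\sum_{m_1=1}^{M_1-1}\sum_{m_2=1}^{M_2-1}(2m_1+1)(2m_2+1)X(m_1,m_2)$ is the complete-block part and $B$ collects the truncated boundary blocks. Because $|X|$ is bounded by some $C_0$ and the number of lattice points in the boundary blocks is $O(M_1^2M_2+M_1M_2^2)$ while $N_1N_2\ge M_1^2M_2^2$, one gets $|B|/(N_1N_2)=O(1/M_1+1/M_2)\to0$ deterministically, which disposes of the boundary.

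Third, and this is the main step, I would prove the weighted SLLN for $A$. Put $w(m)=2m+1$, $T_{M_1,M_2}=\sum_{m_1\le M_1,\,m_2\le M_2}w(m_1)w(m_2)X(m_1,m_2)$ and $W_{M_1,M_2}=\big(\sum_{m_1\le M_1}w(m_1)\big)\big(\sum_{m_2\le M_2}w(m_2)\big)$, so that $A=T_{M_1-1,M_2-1}$. For independent, mean-zero, bounded real variables the elementary fourth-moment estimate gives $\E\big[T_{M_1,M_2}^4\big]\le C\big(\sum_{m_1\le M_1,\,m_2\le M_2}w(m_1)^2w(m_2)^2\big)^2$. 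Using $\sum_{m\le M}(2m+1)\sim M^2$ and $\sum_{m\le M}(2m+1)^2\sim\frac43M^3$, this yields $\E\big[(T_{M_1,M_2}/W_{M_1,M_2})^4\big]=O\big(1/(M_1^2M_2^2)\big)$, which is summable over $(M_1,M_2)\in\N^2$. Summing over the lattice and using Tonelli's theorem, $\sum_{M_1,M_2}(T_{M_1,M_2}/W_{M_1,M_2})^4<\infty$ almost surely, whence $T_{M_1,M_2}/W_{M_1,M_2}\to0$ a.s.\ as $M_1,M_2\to\infty$. I expect this to be the delicate point: summability over the \emph{whole} lattice is exactly what lets one pass to the genuine two-parameter limit, in which $M_1$ and $M_2$ tend to infinity independently, something a one-parameter SLLN would not provide.

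Finally, I would glue the pieces together. Since $M_i^2\le N_i\le M_i^2+2M_i$ and $\sum_{m\le M_i-1}(2m+1)=M_i^2-1$, the normalizers satisfy $N_1N_2/W_{M_1-1,M_2-1}\to1$; hence $A/(N_1N_2)=(T_{M_1-1,M_2-1}/W_{M_1-1,M_2-1})\cdot(W_{M_1-1,M_2-1}/(N_1N_2))\to0$ almost surely. Combined with $B/(N_1N_2)\to0$, this shows $\frac1{N_1N_2}\sum_{0<n\le N}Z(n)\to0$ a.s.\ in the centered case, and therefore $\to\E[X]$ in general.
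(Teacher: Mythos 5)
Your proof is correct, but note that there is essentially nothing in the paper to compare it against: the authors state Lemma \ref{lemmeprob.2} (together with Lemma \ref{lemmeprob.1}) without proof, calling them ``routine variations on the theme of the law of large numbers''. Your argument supplies the omitted routine in full and does so soundly: the reduction to centered, real variables; the block structure ($Z$ constant on blocks of $(2m_1+1)(2m_2+1)$ lattice points); the deterministic elimination of the truncated boundary blocks, whose cardinality $O(M_1^2M_2+M_1M_2^2)$ is negligible against $N_1N_2\ge M_1^2M_2^2$; and a weighted strong law for the complete-block sum. The one genuinely non-routine choice you made, the \emph{fourth}-moment estimate, is also the right one, and your instinct about where the delicacy lies is accurate: the second moment of $T_{M_1,M_2}/W_{M_1,M_2}$ is only $O\big(1/(M_1M_2)\big)$, which is \emph{not} summable over $\N^2$, so a Chebyshev/Borel--Cantelli argument at the $L^2$ level over the full lattice would fail; one would instead have to prove convergence along a sparse sub-lattice and interpolate using boundedness of the $X(n)$ (the classical sandwich route, presumably what the authors regarded as routine). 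Your bound $\E\big[(T_{M_1,M_2}/W_{M_1,M_2})^4\big]=O\big(1/(M_1^2M_2^2)\big)$ is summable over all of $\N^2$, and, as you correctly emphasize, full-lattice summability is exactly what yields the unrestricted two-parameter limit: for each $\epsilon>0$ only finitely many pairs $(M_1,M_2)$ can satisfy $\big(T_{M_1,M_2}/W_{M_1,M_2}\big)^4\ge\epsilon$, hence the ratio is smaller than $\epsilon$ whenever $\min(M_1,M_2)$ is large enough. In short, the proposal is a complete and correct proof of the lemma, filling a gap the paper deliberately leaves to the reader.
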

Let us explain briefly how (S1)$\Rightarrow$(S2) follows from these lemmas.
        
Suppose that a vdC set $D\subset\Z^{2}$ and
a measure $\sigma$ on $\T^{2}$ are given.
We suppose that the Fourier transform of $\sigma$ is null on $D$.
Without loss of generality, we can suppose that $\sigma$ is a
probability measure, and we consider a family of random
variables
$(\theta(n))_{n\in\N^{2}}$ independent and of law $\sigma$. We
define, as in Lemma \ref{lemmeprob.1}, a family of complex random
variables $(Y(n))_{n\in\N^{2}}$. A slight modification\footnote{Details are provided after Lemma \ref{lemmeprob.2.1} in Section \ref{enhanced-def-sec} .} of
Lemma~\ref{lemmeprob.2} gives us the following result: for all
$h\in\Z^{2}$, almost surely,
$$
\lim_{N_{1},N_{2}\to+\infty}\frac1{N_{1}N_{2}}\sum_{0<n\leq N}
Y(n+h)\overline{Y(n)}=\E\left[e(h_{1}\theta_{1}+h_{2}\theta_{2})\right]\;.
$$
This last quantity is exactly $\widehat{\sigma}(h)$ and, by
hypothesis, it is null for $h\in D$. Since $D$ is a vdC set,
we conclude that
$$
\lim_{N_{1},N_{2}\to+\infty}\frac1{N_{1}N_{2}}\sum_{0<n\leq N}
Y(n)=0\;.
$$
By Lemma \ref{lemmeprob.1}, this means that
$\P\left(\theta=0\right)=0$, i.e. 
$\sigma\left(\{(0,0)\}\right)=0$. 
\end{proof}

\subsubsection{Some corollaries}\label{ex.}
Here are some direct applications of the spectral characterization.
\begin{corollary}[Ramsey property. Cf. \cite{R}, Corollary 1.]\label{Ramsey}
        If $D=D_{1}\cup D_{2}$ is a vdC set in $\Z^{2}$, then at least one
        of the sets $D_{1}$ or $D_{2}$ is a vdC set. (In particular,
        if $D$ is a vdC set in $\Z^{2}$ and $E$ is a finite subset of $D$,
        then $D\setminus E$ is still a vdC set in $\Z^{2}$.)
        \end{corollary}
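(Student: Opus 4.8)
The plan is to derive the Ramsey property directly from the spectral characterization (S3) in Theorem \ref{spectralchar}. The key observation is that the defining condition for a vdC set, when phrased spectrally, is of the form ``if a positive measure $\sigma$ has $\widehat\sigma(d)=0$ for all $d\in D$, then $\sigma$ is continuous.'' The negation is the existence of a \emph{witness}: a positive measure with an atom whose Fourier transform vanishes on the whole set. So I would argue by contraposition, assuming neither $D_1$ nor $D_2$ is a vdC set and producing a witness for $D=D_1\cup D_2$.

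\textbf{The main steps, in order.} First, suppose for contradiction that neither $D_1$ nor $D_2$ is a vdC set. By the implication (S1)$\Rightarrow$(S2) (equivalently, using that (S2) fails), there exist positive measures $\sigma_1$ and $\sigma_2$ on $\T^2$ such that $\widehat{\sigma_i}(d)=0$ for all $d\in D_i$, yet $\sigma_i(\{(0,0)\})>0$ for each $i=1,2$. Normalizing, I may assume each $\sigma_i$ has positive mass at the origin. The natural idea is to \emph{convolve}: set $\sigma:=\sigma_1*\sigma_2$. Since Fourier transform turns convolution into pointwise product, $\widehat{\sigma}(d)=\widehat{\sigma_1}(d)\,\widehat{\sigma_2}(d)$ for every $d\in\Z^2$. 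Now for $d\in D=D_1\cup D_2$, either $d\in D_1$, forcing $\widehat{\sigma_1}(d)=0$, or $d\in D_2$, forcing $\widehat{\sigma_2}(d)=0$; in either case the product vanishes, so $\widehat{\sigma}(d)=0$ for all $d\in D$. The final step is to check that the convolution retains an atom at the origin: the mass of $\sigma_1*\sigma_2$ at $(0,0)$ is at least $\sigma_1(\{(0,0)\})\,\sigma_2(\{(0,0)\})>0$, since the pair $(x,-x)$ with $x=(0,0)$ contributes this product. Hence $\sigma$ is a positive measure vanishing on $D$ with a point mass at the origin, so by (S2) the set $D$ is \emph{not} a vdC set, contradicting the hypothesis. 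Therefore at least one $D_i$ is a vdC set.

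\textbf{The parenthetical consequence} about removing a finite set $E\subset D$ follows immediately: a single point $\{d\}$ (indeed any finite set) is never a vdC set, because the measure $\tfrac12(\delta_0+\delta_{x})$ for a suitable $x$ — or more simply any atom placed so that $\widehat\sigma$ vanishes on the finitely many prescribed frequencies — provides a witness against (S2). Writing $D=(D\setminus E)\cup E$ and applying the Ramsey property, since $E$ is not a vdC set, $D\setminus E$ must be.

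\textbf{The point I expect to require the most care} is verifying that the convolution genuinely preserves the atom at the origin with positive mass, and in particular that no cancellation can occur — but since we are convolving \emph{positive} measures, all contributions to $\sigma(\{(0,0)\})$ are nonnegative, so the lower bound $\sigma_1(\{(0,0)\})\sigma_2(\{(0,0)\})$ holds and the obstacle dissolves. The only genuine subtlety is making sure we invoke the already-established equivalence (S1)$\Leftrightarrow$(S2) correctly in both directions (using ``not (S1)'' to \emph{extract} the witnesses $\sigma_i$, and ``not (S2)'' to \emph{conclude} the failure of the vdC property for $D$), which is legitimate since Theorem \ref{spectralchar} has already been proved.
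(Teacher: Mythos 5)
Your main argument is correct and is essentially identical to the paper's proof: the paper's entire justification consists of the two observations that $\widehat{\sigma_{1}\star\sigma_{2}}=\widehat{\sigma_{1}}\cdot\widehat{\sigma_{2}}$ vanishes on $D_{1}\cup D_{2}$ and that $\sigma_{1}\star\sigma_{2}(\{0\})\geq\sigma_{1}(\{0\})\,\sigma_{2}(\{0\})$, read contrapositively through Theorem \ref{spectralchar}, which is exactly your convolution-of-witnesses argument.

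The one genuine weak point is in the parenthetical consequence, namely your justification that an arbitrary finite set $E$ fails (S2). For the measure $\tfrac12(\delta_0+\delta_x)$ one needs $e(d\cdot x)=-1$ for \emph{every} $d\in E$, which is impossible for some finite sets: if $E$ contains both $d$ and $2d$, then $d\cdot x\equiv \tfrac12 \pmod 1$ forces $2d\cdot x\equiv 0\pmod 1$, so $\widehat\sigma(2d)\neq0$. Nor can the hedge ``any atom placed so that $\widehat\sigma$ vanishes on the prescribed frequencies'' rescue this: a measure with only two atoms simply cannot have vanishing Fourier transform on such a set. Two easy repairs exist. First, your witness \emph{does} work for a single point $\{d\}$ (choose $x$ with $d\cdot x\equiv\tfrac12$), and removing a finite set is removing points one at a time, so induction on $|E|$, applying the Ramsey property at each step, gives the parenthetical claim. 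Second, for general finite $E$ one can take a non-purely-atomic witness: choose $F$ with $E\subset F\cup(-F)$ and $F\cap(-F)=\emptyset$, and set
$$
\sigma:=\delta_0+f(x)\,\text{d}x,\qquad f(x):=2\,\mbox{card}(F)-2\sum_{d\in F}\cos\bigl(2\pi\, d\cdot x\bigr)\geq0\;,
$$
so that $\widehat\sigma(d)=1-1=0$ for all $d\in E$ while $\sigma(\{0\})=1>0$. With either repair your deduction of the parenthetical statement is valid (the paper itself leaves this point implicit).
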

\begin{proof}
        If $\sigma_{1}$ and $\sigma_{2}$ are positive measures on $\T^{2}$
such that $\widehat{\sigma_{i}}$ is null on $D_{i}$, then the Fourier transform of their
convolution  $\sigma_{1}\star\sigma_{2}$ vanishes on $D_{1}\cup D_{2}$. And  $\sigma_1\star\sigma_2(\{0\})\geq\sigma_1(\{0\})\times\sigma_2(\{0\})$. \end{proof}
If $\F$ is a family of subsets of $\Z^2$, we denote by $\F^*$ its dual family, that is the family of all sets $G\subset \Z^2$ such that $G\cap F\neq\emptyset$ for all $F\in\F$. The Ramsey property described in Corollary \ref{Ramsey} has a remarkable consequence for the family of vdC$^*$ sets : if $A$ is a vdC set and if $B$ is a vdC$^*$ set, then $A\cap B$ is a vdC set ; this impies that the family of vdC$^*$ sets is stable with respect to finite intersections, hence is a filter.

\begin{corollary}[Sets of differences]
        If $I$ is an infinite subset of $\Z^{2}$, then the set of
        differences $D:=\{n-m\,:\,n,m\in I\ \text{and}\ n\neq m\}$ is a vdC set.
        \end{corollary}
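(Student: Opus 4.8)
The plan is to invoke the spectral characterization of Theorem \ref{spectralchar}, proving $D$ is a vdC set by verifying condition (S2). So I would take an arbitrary positive measure $\sigma$ on $\T^2$ satisfying $\widehat\sigma(d)=0$ for every $d\in D$, and aim to show $\sigma$ carries no mass at the origin, i.e. $\sigma(\{(0,0)\})=0$. After normalizing I may assume $\sigma$ is a probability measure.

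The crux is to recognize that the hypothesis on $\widehat\sigma$ is nothing but an orthogonality relation in $L^2(\sigma)$. Writing $e_n(x):=e(n_1x_1+n_2x_2)$ for $n\in\Z^2$, one has $\langle e_n,e_m\rangle_{L^2(\sigma)}=\int_{\T^2}e((n-m)\cdot x)\,\mathrm d\sigma=\widehat\sigma(n-m)$. For distinct $n,m\in I$ we have $n-m\in D$, so this inner product vanishes, while $\langle e_n,e_n\rangle=\widehat\sigma(0)=1$. Hence $\{e_n:n\in I\}$ is an \emph{infinite orthonormal system} in $L^2(\sigma)$, and this is exactly where the infinitude of $I$ will be used.

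I would then test this system against the indicator $f=\mathbf 1_{\{(0,0)\}}$, which lies in $L^2(\sigma)$ with $\|f\|^2_{L^2(\sigma)}=\sigma(\{(0,0)\})$. Because $e_n(0,0)=1$ for every $n$, all the Fourier coefficients coincide: $\langle f,e_n\rangle_{L^2(\sigma)}=\sigma(\{(0,0)\})$. Applying Bessel's inequality to any $N$ distinct elements $n_1,\dots,n_N\in I$ yields
$$N\,\sigma(\{(0,0)\})^{2}\ \le\ \|f\|^{2}_{L^{2}(\sigma)}=\sigma(\{(0,0)\}).$$
Since $I$ is infinite, $N$ may be taken arbitrarily large, which forces $\sigma(\{(0,0)\})=0$. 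This is precisely (S2), and the conclusion follows.

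I do not anticipate a genuine obstacle: the entire content lies in the two observations that vanishing of $\widehat\sigma$ on a difference set is orthogonality, and that $\langle f,e_n\rangle$ is independent of $n$ (because each $e_n$ equals $1$ at the origin). It is exactly this constancy of the coefficients, combined with an infinite orthonormal family, that collapses Bessel's inequality to the desired result. The only routine care needed is the normalization to a probability measure and the trivial evaluation $e_n(0,0)=1$. This argument also makes transparent that the statement strengthens the elementary fact, recalled in the Introduction, that difference sets are sets of recurrence.
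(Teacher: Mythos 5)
Your proof is correct and takes essentially the same route as the paper: both verify condition (S2) of Theorem \ref{spectralchar} by observing that the vanishing of $\widehat\sigma$ on the difference set makes the characters $\{x\mapsto e(n\cdot x)\,:\,n\in I\}$ an orthonormal system in $L^{2}(\sigma)$, and then deduce $\sigma(\{(0,0)\})\leq 1/N$ for arbitrarily large $N$. The only (cosmetic) difference is in the final inequality: the paper computes $\int_{\T^{2}}\bigl|\sum_{n\in J}e(n\cdot x)\bigr|^{2}\,\mathrm{d}\sigma(x)$ in two ways, obtaining $(\mathrm{card}\,J)^{2}\sigma(\{(0,0)\})\leq\mathrm{card}\,J$, which is a trivially equivalent reformulation of your Bessel-inequality step applied to the indicator of the origin.
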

        \begin{proof}
	Suppose that $\sigma$ is a
probability measure on $\T^{2}$, whose Fourier transform vanishes on $D$.
This means that the characters $x\mapsto e(n\cdot x)$, with $n\in I$, form
an orthonormal family in $L^{2}(\sigma)$. For any finite subset $J$
of $I$, we have
$$
(\mbox{card}J)^{2}\sigma(\{(0,0)\})\leq\int_{\T^{2}}\left|\sum_{n\in
J}e(n\cdot x)\right|^{2}\ \mbox{d}\sigma(x)=\mbox{card}J\;.
$$
This implies that $\sigma$ has no point mass at zero.
\end{proof}
\begin{remark}The preceding proof gives, in fact, more. Namely, \emph{any set $D$ which 
contains sets of differences of arbitrarily large finite sets is a vdC set.}
\end{remark}
\begin{corollary}[Linear transformations of vdC sets]\label{linear-transf}
Let $d$ and $e$ be positive integers, and let $L$ be a linear
transformation from $\Z^d$ into $\Z^e$ (i.e. an $e\times d$ matrix with
integers entries).
\begin{enumerate}
        \item
        If $D$ is a vdC set in $\Z^d$ and if $0\notin L(D)$, then $L(D)$ is a vdC set in $\Z^e$.
        \item
        Let $D\subset\Z^d$. If the linear map $L$ is one to one, and if $L(D)$ is a vdC set
        in $\Z^e$, then $D$ is a vdC set in $\Z^d$.
\end{enumerate}
\end{corollary}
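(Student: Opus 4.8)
The plan is to prove both assertions through the spectral characterization of Theorem~\ref{spectralchar}, specifically criterion (S2): a set $S\subset\Z^k\setminus\{0\}$ is a vdC set if and only if every positive measure on $\T^k$ whose Fourier transform vanishes on $S$ has no atom at the origin. The whole argument rests on one functorial observation. The transpose matrix $L^{t}$ (a $d\times e$ integer matrix) maps $\Z^e$ into $\Z^d$ and therefore induces a continuous group homomorphism $\phi:=L^{t}:\T^e\to\T^d$. For any $n\in\Z^d$ and $y\in\T^e$ one has $n\cdot\phi(y)=(Ln)\cdot y$, so that for any finite positive measure $\tau$ on $\T^e$ the pushforward $\sigma:=\phi_*\tau$ satisfies the fundamental relation
\[
\widehat\sigma(n)=\widehat\tau(Ln)\qquad(n\in\Z^d).
\]

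For part (1), I would start from a positive measure $\tau$ on $\T^e$ with $\widehat\tau\equiv 0$ on $L(D)$ and must show $\tau(\{0\})=0$. Setting $\sigma=\phi_*\tau$, the displayed relation gives $\widehat\sigma(n)=\widehat\tau(Ln)=0$ for every $n\in D$; since $D$ is a vdC set, (S2) yields $\sigma(\{0\})=0$. Because $\phi(0)=0$, we have $\{0\}\subset\phi^{-1}(\{0\})$, whence $\tau(\{0\})\leq\tau(\phi^{-1}(\{0\}))=\phi_*\tau(\{0\})=\sigma(\{0\})=0$. The hypothesis $0\notin L(D)$ enters only to guarantee $L(D)\subset\Z^e\setminus\{0\}$, so that (S2) applies to it. This direction is immediate.

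For part (2) the direction of transport is reversed: I am given a positive measure $\sigma$ on $\T^d$ with $\widehat\sigma\equiv0$ on $D$ and must produce $\sigma(\{0\})=0$, while the available hypothesis concerns measures on $\T^e$. Injectivity of $L$ over $\Z$ forces injectivity over $\R$ (a nonzero rational kernel vector would yield a nonzero integer one), so $\phi=L^{t}:\T^e\to\T^d$ is surjective; moreover $0\notin L(D)$ holds automatically, since $n\in D$ implies $n\neq0$ and hence $Ln\neq0$, so $L(D)$ is a legitimate vdC set. The idea is to lift $\sigma$ to a measure $\tau$ on $\T^e$ with $\phi_*\tau=\sigma$, for then the fundamental relation gives $\widehat\tau(Ln)=\widehat\sigma(n)=0$ for $n\in D$, and (S2) applied to the vdC set $L(D)$ forces $\tau(\{0\})=0$.

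The hard part is that an arbitrary lift will not do: the naive choice spreading $\sigma$ by Haar measure along the fibers of $\phi$ would dilute the atom of $\sigma$ at $0$ over the whole coset $\phi^{-1}(\{0\})=\ker\phi$, destroying exactly the information I need. What is required instead is a lift that carries the origin's atom to the origin. Concretely, I would invoke the existence of a Borel section $s_0:\T^d\to\T^e$ of the surjective continuous homomorphism $\phi$ of compact metrizable groups (a standard fact for quotient maps of Polish groups), and then renormalize it to $s(x):=s_0(x)-s_0(0)$, which is again Borel, still satisfies $\phi\circ s=\mathrm{id}$ since $\phi(s_0(0))=0$, and now fixes the origin, $s(0)=0$. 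Taking $\tau:=s_*\sigma$ gives $\phi_*\tau=\sigma$, while $s^{-1}(\{0\})=\{0\}$ (if $s(x)=0$ then $x=\phi(s(x))=0$) yields $\tau(\{0\})=\sigma(s^{-1}(\{0\}))=\sigma(\{0\})$. Combining this with $\tau(\{0\})=0$ from the previous paragraph gives $\sigma(\{0\})=0$, so by (S2) the set $D$ is a vdC set in $\Z^d$, completing the proof.
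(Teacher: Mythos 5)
Your proof is correct. Part (1) is essentially the paper's own argument: push the measure on $\T^e$ forward through the transpose map ${}^{t}\!L:\T^e\to\T^d$ and apply criterion (S2) of Theorem~\ref{spectralchar} on both sides. For part (2), however, you take a genuinely different route. The paper first reduces to equal dimensions via an adapted-basis (Smith normal form) lemma: it identifies $L(\Z^d)$ with $p_1\Z n_1+\cdots+p_d\Z n_d$ inside $\Z^e$, regards $L(D)$ as a vdC set in $\Z^d$ and $L$ as an injective endomorphism of $\Z^d$, so that ${}^{t}\!L:\T^d\to\T^d$ becomes onto and \emph{finite-to-one}; it then lifts $\sigma'$ through a right inverse and transports continuity (criterion (S3)) back through the finite fibers. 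You instead work directly with the surjection $\phi={}^{t}\!L:\T^e\to\T^d$ (surjective because injectivity of $L$ over $\Z$ forces injectivity over $\R$), whose fibers are positive-dimensional cosets of $\ker\phi$ when $e>d$; as you observe, this is exactly why a naive lift would dilute the atom at $0$, and it is also why the paper cannot run a finite-to-one argument without its algebraic reduction. Your substitute---a Borel section $s_0$ of $\phi$ (measurable selection for continuous surjections of compact metric spaces), normalized to $s=s_0-s_0(0)$ so that $\phi\circ s=\mathrm{id}$, $s(0)=0$ and $s^{-1}(\{0\})=\{0\}$---tracks the atom exactly: $\tau:=s_*\sigma$ satisfies $\widehat\tau(Ln)=\widehat\sigma(n)$ and $\tau(\{0\})=\sigma(\{0\})$, and (S2) applied to the vdC set $L(D)$ concludes. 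The trade-off: the paper stays within elementary measure theory (finite-to-one maps, no selection theorem) at the price of the lattice algebra, plus an implicit step (that a vdC set of $\Z^e$ contained in the direct summand $\Z n_1+\cdots+\Z n_d$ is vdC in that copy of $\Z^d$, which itself needs a small section-type argument, e.g.\ extending $\mu$ to $\mu\otimes\delta_0$); your route avoids the algebra altogether and handles unequal dimensions in one stroke, at the price of invoking a standard but nontrivial selection theorem---note that the paper's phrase ``it possesses an inverse on the right'' quietly relies on the same kind of fact that you make explicit.
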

\begin{proof}
Let $D$ be vdC set in $\Z^d$ and $\sigma$ a positive measure on the
$e$-torus such that $\widehat{\sigma}$ vanishes on $L(D)$. Let us denote by $^t\!L$ the map from $\T^e$ into $\T^d$ defined by $k\cdot^t\!\!L(x)=L(k)\cdot x$ for $k\in\Z^d$ and $x\in\T^e$. Denoting by $\sigma'$ the image of $\sigma$ under the linear transformation $^t\!L$,
     we see that, for all $k\in\Z^d$, $\widehat{\sigma'}(k)=\widehat\sigma(L(k))$. Hence the Fourier
transform $\widehat{\sigma'}$ vanishes on the vdC set $D$. The measure $\sigma'$
has no mass at zero, and hence $\sigma$ also has no mass at zero. This proves the
first assertion.

Suppose now that $L$ is one to one and that $L(D)$ is a vdC set in $\Z^e$.
Consider the lattice $L(\Z^d)$ in $\Z^e$.
By a classical lemma (see for example \cite{godement}, Exercise 8 of Chapter 31), there
exist $n_1, n_2,\ldots, n_e$ in $\Z^e$ and positive
integers $p_1,p_2,\ldots,p_d$ such that $\Z^e=\Z n_1+\Z n_2+\ldots+\Z n_e$ and $L(\Z^d)= p_1\Z n_1+p_2\Z n_2+\ldots+p_d\Z n_d$. This allows us to view $L(D)$ as a vdC set in $\Z^d\simeq \Z n_1+\Z n_2+\ldots+\Z n_d$ and $L$ as an endomorphism of $\Z^d$.

Let $\sigma'$ be a positive measure on the $d$-torus such that
$\widehat{\sigma'}$ vanishes on $D$. The linear map $^t\!L$ from
$\T^d$ into $\T^d$ is finite to one and
onto. Since it is onto, it posseses an inverse on the right and we can
see $\sigma'$ as
the image of a positive measure $\sigma$ on the $d$-torus, under the
map $^t\!L$. The Fourier transform $\widehat{\sigma}$ vanishes on
$L(D)$, hence the measure $\sigma$ is continuous. Since the map $^t\!L$
is finite to one, we conclude that the measure $\sigma'$ is also
continuous. This proves the second assertion.
\end{proof}
\begin{corollary}[Lattices are vdC*]\label{lattices}
        If $G$ is any $d$-dimensional lattice in $\Z^d$, and if $D$ is a vdC
        set in $\Z^d$, then $G\cap D$ is a vdC set in $\Z^d$.
        \end{corollary}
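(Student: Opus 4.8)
The plan is to deduce this from the spectral characterization (Theorem~\ref{spectralchar}, in its $\Z^d$ form) together with an averaging of the measure over the annihilator of $G$. Write $k:=[\Z^d:G]$ for the (finite) index of the full-rank sublattice $G$, and let
$$
G^\perp:=\{t\in\T^d:\ n\cdot t\in\Z\ \text{ for all }n\in G\}
$$
be the annihilator of $G$ in $\T^d$, a finite subgroup of order $k$ (canonically the dual of $\Z^d/G$). Since $G\cap D\subset\Z^d\setminus\{0\}$, it suffices by the implication (S2)$\Rightarrow$(S1) to start from a positive measure $\sigma$ on $\T^d$ with $\widehat\sigma(n)=0$ for all $n\in G\cap D$ and to prove that $\sigma(\{0\})=0$.

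First I would replace $\sigma$ by its average over the translates by elements of $G^\perp$, namely
$$
\tilde\sigma:=\frac1k\sum_{t\in G^\perp}\tau_t\sigma\,,
$$
where $\tau_t\sigma$ is the image of $\sigma$ under $x\mapsto x+t$. As a convex combination of positive measures, $\tilde\sigma$ is again a positive measure. The point of the averaging is that it kills every Fourier coefficient off $G$: from $\widehat{\tau_t\sigma}(n)=e(n\cdot t)\,\widehat\sigma(n)$ we get
$$
\widehat{\tilde\sigma}(n)=\widehat\sigma(n)\cdot\frac1k\sum_{t\in G^\perp}e(n\cdot t)\,.
$$
The key computation is the orthogonality relation stating that $\tfrac1k\sum_{t\in G^\perp}e(n\cdot t)$ equals $1$ when $n\in G$ and $0$ otherwise: the map $t\mapsto e(n\cdot t)$ is a character of the finite group $G^\perp$, which is trivial precisely when $n\in(G^\perp)^\perp=G$ (duality for the finite group $\Z^d/G$), so the sum over $G^\perp$ is $k$ for $n\in G$ and $0$ for $n\notin G$. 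Hence $\widehat{\tilde\sigma}(n)=\widehat\sigma(n)$ for $n\in G$ and $\widehat{\tilde\sigma}(n)=0$ for $n\notin G$.

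Then I would check that $\widehat{\tilde\sigma}$ vanishes on all of $D$, not merely on $G\cap D$. Indeed, if $n\in D$ and $n\in G$, then $n\in G\cap D$ and $\widehat{\tilde\sigma}(n)=\widehat\sigma(n)=0$ by hypothesis, while if $n\in D$ and $n\notin G$, then $\widehat{\tilde\sigma}(n)=0$ by the previous paragraph. Since $D$ is a vdC set, property (S2) for $D$ forces $\tilde\sigma(\{0\})=0$. Finally, because
$$
\tilde\sigma(\{0\})=\frac1k\sum_{t\in G^\perp}\sigma(\{-t\})
$$
is a sum of nonnegative terms, the contribution of $t=0$ gives $\sigma(\{0\})=0$, which is exactly what verifies (S2) for $G\cap D$.

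The only delicate point is the orthogonality step, i.e. the identity $(G^\perp)^\perp=G$ ensuring that the averaging projects the Fourier transform exactly onto the coefficients indexed by $G$; once this is in place, everything reduces to a routine application of the spectral characterization already available for $D$.
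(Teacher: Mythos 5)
Your proof is correct, but it follows a genuinely different route from the paper's. The paper deduces the corollary from the Ramsey property (Corollary \ref{Ramsey}): since $G$ has finite index, $\Z^d$ is the union of $G$ with finitely many cosets $z_i+G$, $z_i\notin G$; each translate $z_i+G$ is shown not to be a vdC set by testing the definition on the indicator function of $G$ (whose Ces\`aro mean is $1/[\Z^d:G]>0$ while all correlations along $z_i+G$ vanish identically --- note this uses the bounded-sequence form of Definition \ref{vdc-def}, not the modulus-one form); hence each subset $(z_i+G)\cap D$ is non-vdC, and writing $D=(G\cap D)\cup\bigcup_i\bigl((z_i+G)\cap D\bigr)$, the Ramsey property forces $G\cap D$ to be vdC. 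Your argument instead works directly with Theorem \ref{spectralchar}: averaging $\sigma$ over $G^\perp$ is the same as convolving $\sigma$ with the uniform probability measure $m_{G^\perp}:=\frac1k\sum_{t\in G^\perp}\delta_t$, whose Fourier transform is exactly the indicator of $G$; so in effect you have inlined the convolution computation that underlies the paper's proof of the Ramsey property, with $m_{G^\perp}$ serving as an explicit measure showing that all of $\Z^d\setminus G$ fails (S2). What your version buys: it is self-contained modulo the spectral characterization, it avoids both the auxiliary fact about cosets and the bounded-sequence variant of the definition that this test implicitly requires, and it gives the quantitative bound $\sigma(\{0\})\leq k\,\tilde\sigma(\{0\})$. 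What the paper's version buys: it records along the way the independently useful fact that no nontrivial coset $z+G$ is a vdC set, and it reuses general machinery instead of redoing a convolution by hand. The one step you flagged as delicate, the duality $(G^\perp)^\perp=G$ and the orthogonality relation for characters of the finite group $G^\perp\simeq\widehat{\Z^d/G}$, is indeed valid precisely because $G$ is full-rank (finite index), so there is no gap.
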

\begin{proof}
To begin, we remark that if $G$ is a lattice in $\Z^d$, and if
$z\in\Z^d$, $z\notin G$, then the translate $z+G$ is not a vdC set
in $\Z^d$ (test the definition of a vdC set on the indicator function
of the set $G$). Since $G$ is a $d$-dimensional lattice in $\Z^d$,
there exist finitely many points $z_{1},z_{2},\ldots,z_{k}$ in $\Z^d$
and outside $G$ such that
$$
\Z^d=G\cup\big(\cup_{i=1}^k(z_{i}+G)\big)\;.
$$
Let $D$ be a vdC set in $\Z^d$. We have
$$
D=(G\cap D)\cup\big(\cup_{i=1}^k(z_{i}+G)\cap D\big)\;.
$$
Since none of the sets $(z_{i}+G)\cap D$ is vdC, Corollary
\ref{Ramsey} tells us that the set $(G\cap D)$ is vdC.
\end{proof}
\begin{remark}As a consequence of the last two statements, we note the following fact, which is the direct extension of Corollary 2 in \cite{R}. 

\emph{Let $L$ be a one to one linear transformation from $\Z^d$ into itself; let $D$ be a vdC set in $\Z^d$; the set of $n\in\Z^d$ such that $L(n)\in D$ is a vdC set in $\Z^d$.}

Indeed, by Corollary \ref{lattices}, $D\cap L(\Z^d)$ is a vdC set in $\Z^d$ and, by Corollary~\ref{linear-transf}, its inverse image by $L$ is a vdC set.\\
\end{remark}

The spectral characterization also implies that various formulations of the vdC property, 
associated to different averaging methods, are in fact equivalent (see Section 
\ref{dist-notion}).
\subsection{The Kamae - Mend\`es France criterion}
\subsubsection{The criterion}
Let $D\subset\Z^{2}$ and let $P$ be a real trigonometric polynomial on
$\T^{2}$. We say that {\em{the spectrum of $P$ is contained in $D$}} if
$P$ is a linear combination of the characters $(x_{1},x_{2})\mapsto
e(d_{1}x_{1}+d_{2}x_{2})$ with $(d_{1},d_{2})\in\pm D$. In the case of a
one dimensional space of parameters, the following proposition appears in Ruzsa's article \cite{R}, with the same proof.

\begin{proposition}\label{K-MF1}
A subset $D$ of $\Z^{2}\setminus\{0\}$ is a van der Corput set if and only if
for all $\epsilon>0$, there
exists a real trigonometric polynomial $P$ on the 2-torus $\T^{2}$
whose spectrum is contained in $D$ and which satisfies $P(0)=1$,
$P\geq-\epsilon$.
\end{proposition}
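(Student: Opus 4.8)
The plan is to deduce the proposition from the spectral characterization of Theorem~\ref{spectralchar}; concretely I will show that condition (S2) is equivalent to the asserted existence of polynomials. Let $W$ denote the real vector space of real trigonometric polynomials on $\T^2$ whose spectrum is contained in $\pm D$. Since $0\notin D$, every $P\in W$ has zero constant term, while $P(0)$ is the value at the origin of $\T^2$. The elementary dictionary I will use repeatedly is that for a positive measure $\sigma$, because $\widehat\sigma(-d)=\overline{\widehat\sigma(d)}$, the condition ``$\widehat\sigma(d)=0$ for all $d\in D$'' is equivalent to ``$\int_{\T^2}P\,\mathrm{d}\sigma=0$ for all $P\in W$''. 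We may clearly assume $D\neq\emptyset$, since the empty set is neither a vdC set nor admits an admissible polynomial.

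For the easy implication (existence of the polynomials $\Rightarrow$ $D$ is vdC) I would verify (S2) directly. Suppose $\sigma\ge0$ satisfies $\widehat\sigma(d)=0$ for all $d\in D$, so $\int P\,\mathrm{d}\sigma=0$ for every $P\in W$. Fix $\epsilon>0$ and a polynomial $P$ with $\mathrm{spec}(P)\subseteq D$, $P(0)=1$, $P\ge-\epsilon$, and split the integral at the origin:
\[
0=\int_{\T^2}P\,\mathrm{d}\sigma=P(0)\,\sigma(\{0\})+\int_{\T^2\setminus\{0\}}P\,\mathrm{d}\sigma\ge\sigma(\{0\})-\epsilon\,\sigma(\T^2).
\]
Hence $\sigma(\{0\})\le\epsilon\,\sigma(\T^2)$ for every $\epsilon>0$, so $\sigma(\{0\})=0$, (S2) holds, and by Theorem~\ref{spectralchar} $D$ is a vdC set.

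For the hard implication ($D$ vdC $\Rightarrow$ the polynomials exist) I would argue the contrapositive by Hahn--Banach separation. Suppose that for some $\epsilon_0>0$ no $P\in W$ with $P(0)=1$ satisfies $P\ge-\epsilon_0$, i.e.\ $\min_x P(x)\le-\epsilon_0$ for all such $P$. Inside the Banach space $C(\T^2)$ of real continuous functions, consider the closed convex cone $S=\{f:f\ge0\}$, which has nonempty interior, and the affine set $L=\{P+\tfrac{\epsilon_0}{2}\mathbf 1:P\in W,\ P(0)=1\}$. The hypothesis makes $L$ disjoint from $S$, so the geometric Hahn--Banach theorem provides a nonzero continuous functional separating them; since $S$ is a cone the functional is nonnegative on $S$, hence by Riesz it is integration against a nonzero positive measure $\mu$, and the separating constant may be taken to be $\inf_S=0$. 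The separation then reads $\int P\,\mathrm{d}\mu+\tfrac{\epsilon_0}{2}\mu(\T^2)\le0$ for all $P\in W$ with $P(0)=1$. Replacing $P$ by $P+tQ$ with $Q\in W$, $Q(0)=0$ and letting $t\in\R$ vary forces $\int Q\,\mathrm{d}\mu=0$; writing $W=\{Q:Q(0)=0\}\oplus\R P_0$ with $P_0(x)=\tfrac12(e(d\cdot x)+e(-d\cdot x))$ for some $d\in D$ (so $P_0(0)=1$), I obtain $\int P\,\mathrm{d}\mu=\beta\,P(0)$ for all $P\in W$, where $\beta:=\int P_0\,\mathrm{d}\mu\le-\tfrac{\epsilon_0}{2}\mu(\T^2)<0$.

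The concluding step is to correct $\mu$ by a point mass: set $\sigma:=\mu-\beta\,\delta_0$. As $-\beta>0$ and $\mu\ge0$, $\sigma$ is again a positive measure, and for every $P\in W$ one has $\int P\,\mathrm{d}\sigma=\int P\,\mathrm{d}\mu-\beta P(0)=0$, so $\widehat\sigma$ vanishes on $D$; yet $\sigma(\{0\})=\mu(\{0\})-\beta\ge-\beta>0$. This contradicts (S2), so $D$ is not a vdC set, which is exactly the contrapositive we wanted. I expect the main obstacle to lie in the separation step: one must arrange the two convex sets so that only $S$ needs an interior point, correctly identify the separating functional as a \emph{positive} measure via Riesz, and then recognize that the residual linear term $\beta P(0)$ is precisely a point mass at the origin that can be absorbed so as to restore $\widehat\sigma|_D=0$ while preserving both $\sigma\ge0$ and $\sigma(\{0\})>0$.
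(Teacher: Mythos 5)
Your proof is correct and takes essentially the same route as the paper's: the easy direction integrates $P$ against $\sigma$ and splits off the mass at the origin exactly as in the paper, and the hard direction is the same Hahn--Banach/Riesz separation argument (the paper separates the open cone of strictly positive functions from $\epsilon+\mathcal Q$ where you separate the closed positive cone from $L$, a cosmetic difference), identifying the functional on $W$ as $\beta P(0)$ with $\beta<0$ (the paper's $r$) and then absorbing it into a Dirac mass at $0$ --- your $\sigma=\mu-\beta\,\delta_0$ is the paper's $\sigma'=\frac1{1-r}(\sigma-r\delta)$ up to normalization --- before invoking Theorem~\ref{spectralchar}. No gaps; the argument is sound as written.
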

\begin{proof}
Let us assume that there exists such a trigonometric polynomial.
Let $\sigma$ be a positive measure on $\T^{2}$ whose Fourier
transform $\widehat\sigma$ is null on $D$. Then we have
$$
\int_{\T^{2}}P\ \mbox{d}\sigma=0\;.
$$
But from $P(0)=1$ and $P\geq-\epsilon$ we deduce that
$$
\int_{\T^{2}}P\
\mbox{d}\sigma\geq\sigma\left(\{0\}\right)-
\epsilon\sigma\left(\T^2\setminus\{0\}\right)\;.
$$
Thus we necessarily have $\sigma\left(\{0\}\right)=0$, and we deduce from
Theorem \ref{spectralchar} that $D$ is a vdC set.

For the proof of the inverse implication, we follow Ruzsa's argument (\cite{R}, Section 5).
We will denote $m\cdot x:=m_{1}x_{1}+m_{2}x_{2}$ if
$x=(x_{1},x_{2})\in\T^{2}$
and $m=(m_{1},m_{2})\in\Z^{2}$.

Let us suppose that $D$ is a subset of $\Z^{2}$ and that there exists
$0<\epsilon<1$ such that, for any real trigonometric polynomial $P$
with spectrum in $D$ and such that $P(0)=1$, we have
$\min(P+\epsilon)\leq0$. In the Banach space $\C_\R\!\left(\T^2\right)$ of real continuous
functions on $\T^{2}$, equipped with the uniform norm, we consider the
set
$\mathcal F$ of strictly positive functions and the set
$\mathcal Q$ of real trigonometric polynomials $P$, with spectrum in $D$
and such that $P(0)=1$. By hypothesis, the convex sets
$\mathcal F$ and $\epsilon+\mathcal
Q$ are disjoint. By Hahn-Banach Theorem, there exists a non-zero real-valued continuous linear functional $L$ on $\C_\R\!\left(\T^2\right)$, which takes nonnegative values on $\mathcal F$ and nonpositive values on $\epsilon+\mathcal Q$. Let us denote by $\sigma$ the measure on $\T^2$ associated to $L$ by Riesz representation theorem : $L(f)=\int_{\T^2}f\,\text{d}\sigma$, for all $f\in\C_\R\!\left(\T^2\right)$. Since $L\geq0$ on $\mathcal F$, this measure is positive and we
can assume that it is normalized. Let $m,n\in\pm D$. If $P\in\mathcal Q$,
then, for all $\lambda\in\R$, the function $x\mapsto
\epsilon+P+\lambda(\cos 2\pi (m\cdot x)-\cos 2\pi (n\cdot x))$
is still in $\epsilon+\mathcal
Q$. This implies that $\int\cos 2\pi (m\cdot x)\ \mbox{d}\sigma(x)=
\int\cos 2\pi (n\cdot x)\ \mbox{d}\sigma(x)$. Similarly, for all
$\lambda\in\R$,
the function $x\mapsto
\epsilon+P+\lambda\sin 2\pi (m\cdot x)$ is still in $\epsilon+\mathcal
Q$, and this implies that $\int\sin 2\pi (m\cdot x)\ \mbox{d}\sigma(x)=0$.

We define $r:=\int\cos 2\pi (m\cdot x)\ \mbox{d}\sigma(x)$, for $m\in\pm D$. If
$P\in\mathcal Q$, we have $$\int_{\T^{2}}(\epsilon+P)\
\mbox{d}\sigma\leq 0$$ and, writing $$P(x)=\sum_{m\in\pm D} a_m \cos2\pi(m\cdot x) + b_m\sin2\pi(m\cdot x)\;,$$ we have $$\int_{\T^{2}}P\
\mbox{d}\sigma = r\sum_{m\in\pm D} a_m=rP(0)=r\;.$$
Hence $r\leq-\epsilon<0$. Denoting by $\delta$ the Dirac mass at
$0$, we consider a new probability measure $\sigma'$ defined by
$$
\sigma':=\frac1{1-r}(\sigma-r\delta)\;.
$$
We have $\sigma'(\{0\})\geq\frac{-r}{1-r}>0$.

But this probability satisfies $\widehat{\sigma'}(m)=0$ for all $m\in
D$, and, using Theorem~\ref{spectralchar}, we conclude that
$D$ is not a vdC set.
\end{proof}


\subsubsection{Application to polynomial sequences and sequences of shifted primes}\label{pol+prime}
The following proposition is the two-dimensional extension of Example 3 in \cite{K-MF}.
\begin{proposition}\label{K-MF3}
        Let $D\subset\Z^{2}$. For each $q\in\N$, we denote
        $$D_{q}:=\left\{(d_{1},d_{2})\in D\,:\,q! \mbox{ divides
        }d_{1}\mbox{ and }d_{2}\right\}\;.$$
        Suppose that, for every $q$,
        there exists a
        sequence $(h^{q,n})_{n\in\N}$ in $D_{q}$ such that, for every
        $x=(x_{1},x_{2})\in\R^{2}$, if $x_{1}$ or $x_{2}$ is irrational,
        the sequence $\left(h^{q,n}\cdot x\right)_{n\in\N}$ is
        uniformly distributed mod 1.
        Then $D$ is a vdC set.
\end{proposition}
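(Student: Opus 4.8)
The plan is to verify the spectral criterion (S2) of Theorem~\ref{spectralchar}: I take a positive measure $\sigma$ on $\T^2$ whose Fourier transform vanishes on $D$ and aim to conclude $\sigma(\{(0,0)\})=0$. The single input from the hypothesis is that every $h^{q,n}$ lies in $D_q\subseteq D$, so $\widehat\sigma(h^{q,n})=0$ for all $q,n$. Averaging these vanishing coefficients and moving the Ces\`aro mean inside the integral gives, for every $q$ and every $N$,
\begin{equation*}
\int_{\T^2}\phi_{q,N}(x)\,\mathrm{d}\sigma(x)=0,\qquad \phi_{q,N}(x):=\frac1N\sum_{n=1}^N e(h^{q,n}\cdot x).
\end{equation*}
Everything then reduces to understanding the limiting behaviour of $\phi_{q,N}$ against $\sigma$.

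Next I would sort the points of $\T^2$ into three regimes. On the set $I$ of points with at least one irrational coordinate, the hypothesis says $(h^{q,n}\cdot x)_n$ is uniformly distributed mod $1$, so by Weyl's criterion (the instance $k=1$) $\phi_{q,N}(x)\to 0$ as $N\to\infty$. On the finite subgroup $F_q:=\{x\in\T^2: q!\,x_1\in\Z \text{ and } q!\,x_2\in\Z\}$, the divisibility $q!\mid h^{q,n}_i$ forces $h^{q,n}\cdot x\in\Z$, hence $e(h^{q,n}\cdot x)=1$ and $\phi_{q,N}\equiv 1$; note $(0,0)\in F_q$. There remains the ``bad'' set $R_q$ of points both of whose coordinates are rational but which lie outside $F_q$ (some coordinate has denominator not dividing $q!$); there I have no decay, only the trivial bound $|\phi_{q,N}|\le 1$.

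With this trichotomy I would split $\int_{\T^2}\phi_{q,N}\,\mathrm{d}\sigma=0$ over $F_q$ and its complement. The $F_q$-part equals $\sigma(F_q)$, so $\sigma(F_q)=-\int_{\T^2\setminus F_q}\phi_{q,N}\,\mathrm{d}\sigma$, whence $\sigma(F_q)\le \int_I|\phi_{q,N}|\,\mathrm{d}\sigma+\sigma(R_q)$. Letting $N\to\infty$, dominated convergence annihilates the integral over $I$, leaving $\sigma(\{(0,0)\})\le\sigma(F_q)\le\sigma(R_q)$. Finally I let $q\to\infty$: the sets $R_q$ decrease (since $q!\mid(q+1)!$ gives $F_q\subseteq F_{q+1}$) and have empty intersection, because every point with both coordinates rational falls into $F_q$ once $q$ exceeds its two denominators; hence $\sigma(R_q)\to 0$ by continuity of the finite measure $\sigma$ from above, and so $\sigma(\{(0,0)\})=0$, as required.

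The step I expect to be the crux is the intermediate regime $R_q$: the uniform distribution hypothesis yields genuine cancellation only in the irrational regime, while the divisibility merely trivializes $\phi_{q,N}$ on the torsion group $F_q$, so the rational but non-torsion points are precisely where neither tool bites. The device that resolves this is taking the two limits in the correct order---first $N\to\infty$ to eliminate $I$, then $q\to\infty$ so that the growing divisibility by $q!$ absorbs every fixed rational point into $F_q$ and squeezes $R_q$ down to the empty set. Taking the limits simultaneously, or in the reverse order, would not make the bad set manifestly shrink.
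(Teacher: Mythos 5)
Your proof is correct, and it rests on the same pillars as the paper's own argument: the spectral criterion (S2) of Theorem \ref{spectralchar}, the same averaged characters $\phi_{q,N}=P_{q,N}$ with spectrum in $D$ (so that $\int_{\T^2}\phi_{q,N}\,\mathrm{d}\sigma=0$), Weyl decay at points having an irrational coordinate, and the identity $\phi_{q,N}\equiv1$ on the finite group $F_q$ forced by $q!$ dividing both coordinates of $h^{q,n}$. Where you genuinely diverge is in the treatment of the crux you yourself single out, namely the rational points outside $F_q$, where $\phi_{q,N}(x)$ need not converge as $N\to\infty$. The paper resolves this by extracting, for each $q$, a subsequence in $N$ along which $P_{q,N}$ converges pointwise everywhere (a diagonal argument over the countable set of rational points, combined with the convergence to $0$ elsewhere); calling the limit $g_q$, it observes that $g_q$ converges pointwise to the indicator of the rational points and applies dominated convergence twice, concluding the stronger statement $\sigma(\Q^2)=0$. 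You avoid any subsequence extraction: the trichotomy $I\cup F_q\cup R_q$, the trivial bound $|\phi_{q,N}|\le1$ on $R_q$, and continuity from above of the finite measure $\sigma$ along the decreasing sets $R_q$ give $\sigma(\{(0,0)\})\le\sigma(F_q)\le\sigma(R_q)\to0$. Your route is marginally more self-contained, and it loses nothing: since $\sigma(F_q)\le\sigma(R_q)$ holds for every $q$, with $\sigma(F_q)$ nondecreasing and $\sigma(R_q)$ decreasing to $0$, your argument also yields $\sigma(F_q)=0$ for all $q$ and hence the paper's conclusion that the set of rational points is $\sigma$-null.
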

\begin{proof}
Let us define a family of trigonometric polynomials with spectrum
contained in $D$, by the formula
\begin{equation}\label{trigo-mean}
P_{q,N}(x):=\frac1N\sum_{n=1}^{N}e\left(h^{q,n}\cdot x\right)\;,
\end{equation}
where $q$ and $N$ are positive integers and $x\in\R^{2}$.
By hypothesis, if $x\notin\Q^{2}$ then
$\lim_{N\to+\infty}P_{q,N}(x)=0$. For each $q$, there exists a subsequence
$\left(P_{q,N'}\right)$ which is pointwise convergent to a function
$g_{q}$. For all $x\in\Q^{2}$, we have $g_{q}(x)=1$ for all large
enough $q$, and for all  $x\notin\Q^{2}$, we have $g_{q}(x)=0$. The
sequence $(g_{q})$ is pointwise convergent to the characteristic
function of $\Q^{2}$. 
Consider now a positive measure $\sigma$ on $\T^2$ whose Fourier transform $\widehat\sigma$ vanishes on $D$. We have $\int P_{q,N}\,d\sigma=0$ for all $q,N$. Applying the dominated convergence theorem twice, we conclude that $\sigma(\Q^2)=0$. In particular $\sigma(\{0\})=0$, and we are done.
\end{proof}
A sequence $(d_n)_{n\in\N}$ in $\Z^2$ will be called a \emph{vdC sequence} if the set of its values $\{d_n\,:\,n\in\N\}$ is a vdC set.

The ($d$-dimensional version of the) following proposition extends Theorem~4.2 in \cite{bergelson}.
\begin{proposition}\label{polynomials}
Let $p_{1}$ and $p_{2}$ be two polynomials with integer coefficients.
The sequence $(p_{1}(n),p_{2}(n))_{n\in\N}$
is a vdC sequence in $\Z^{2}$ if and only if for all positive integers $q$, there
exists $n\geq1$ such that $q$ divides $p_{1}(n)$ and $p_{2}(n)$.
\end{proposition}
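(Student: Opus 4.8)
The plan is to route both implications through the spectral characterization of Theorem~\ref{spectralchar}. Write $S=\{(p_1(n),p_2(n)):n\in\N\}$; the divisibility condition will be the exact gateway controlling whether an annihilating measure can carry mass at the origin. For the \emph{necessity} of the condition I would argue by contraposition and produce an explicit obstruction. Suppose there is a $q\in\N$ such that no $n$ satisfies simultaneously $q\mid p_1(n)$ and $q\mid p_2(n)$, and consider the probability measure
$$
\sigma=\frac1{q^{2}}\sum_{0\le j_1,j_2<q}\delta_{(j_1/q,\,j_2/q)}
$$
spread uniformly over the $q$-torsion points of $\T^{2}$. Its Fourier transform factorises as a product of two Dirichlet-type sums, giving $\widehat\sigma(d_1,d_2)=1$ when $q$ divides both $d_1$ and $d_2$ and $\widehat\sigma(d_1,d_2)=0$ otherwise. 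By the standing assumption $\widehat\sigma$ then vanishes on every point of $S$, whereas $\sigma(\{(0,0)\})=q^{-2}>0$. This violates condition (S2) of Theorem~\ref{spectralchar}, so $S$ is not a vdC set, proving necessity.

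For \emph{sufficiency} I would feed the Kamae--Mend\`es France criterion (Proposition~\ref{K-MF3}) with the obvious candidate families. Fix $q$; applying the divisibility hypothesis with modulus $q!$ choose $a$ with $q!\mid p_1(a)$ and $q!\mid p_2(a)$. Since $p_i(a+nq!)\equiv p_i(a)\equiv 0 \pmod{q!}$, the points $h^{q,n}:=(p_1(a+nq!),p_2(a+nq!))$ lie in $D_{q}$, and writing $\phi:=x_1p_1+x_2p_2$ we have $h^{q,n}\cdot x=\phi(a+nq!)$, a real polynomial sequence in $n$. By Weyl's theorem the equidistribution hypothesis of Proposition~\ref{K-MF3} holds precisely when $\phi$ has an irrational coefficient in some positive degree (the integer affine substitution $n\mapsto a+nq!$ does not affect this), and this happens for every $x$ with $x_1$ or $x_2$ irrational exactly when the integer vectors $c_j\in\Z^2$ of degree-$j$ coefficients of $(p_1,p_2)$, $j\ge1$, span $\Q^2$, i.e.\ when \emph{no} nontrivial combination $\alpha p_1+\beta p_2$ is a constant polynomial. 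In this non-degenerate case Proposition~\ref{K-MF3} applies verbatim and $S$ is a vdC set.

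The remaining \emph{degenerate case}, where some nonzero $(\alpha,\beta)\in\Z^2$ makes $\alpha p_1+\beta p_2$ constant, is the genuine obstacle: there $h^{q,n}\cdot x$ collapses to a multiple of $\alpha x_1+\beta x_2$, which fails to equidistribute for the irrational $x$ with $\alpha x_1+\beta x_2\in\Q$, so Proposition~\ref{K-MF3} is unavailable and no choice of $h^{q,n}$ repairs this. The key observation is that the divisibility hypothesis forces the constant to vanish: if $\alpha p_1+\beta p_2\equiv c$ and $q\mid p_1(n),p_2(n)$ then $q\mid c$ for all $q$, so $c=0$. Taking $(\alpha,\beta)$ primitive yields $p_1=\alpha r$ and $p_2=\beta r$ for a single $r\in\Z[n]$, whence $S=L(\{r(n):n\in\N\})$ for the injective map $L(k)=(\alpha k,\beta k)$.

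To finish the degenerate case I would transfer it to one dimension. Because $\gcd(\alpha r(n),\beta r(n))=|r(n)|$, the pair condition ``$\forall q\,\exists n:\,q\mid p_1(n),p_2(n)$'' is equivalent to ``$\forall Q\,\exists n:\,Q\mid r(n)$'', so the one-dimensional polynomial criterion of Kamae--Mend\`es France (\cite{K-MF}; equivalently the $d=1$ instance of the non-degenerate argument above, where a non-constant $r$ automatically has nonzero positive-degree coefficients) shows that $\{r(n):n\in\N\}$ is a vdC set in $\Z$. Discarding the finitely many $n$ with $r(n)=0$ if necessary (legitimate by the Ramsey property, Corollary~\ref{Ramsey}, and excluding the trivial $p_1\equiv p_2\equiv 0$), Corollary~\ref{linear-transf}(1) promotes this to the statement that $S=L(\{r(n)\})$ is a vdC set in $\Z^2$. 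Combining the two cases establishes sufficiency, and I expect the clean separation of the degenerate case to be the only delicate point of the argument.
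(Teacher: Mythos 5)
Your proof is correct, and its sufficiency half follows the paper's route: both arguments feed Proposition~\ref{K-MF3} with the samples $h^{q,n}=\bigl(p_1(a+nq!),p_2(a+nq!)\bigr)$, verify the equidistribution hypothesis via Weyl's theorem, reduce the remaining (proportional) case to the one-dimensional Kamae--Mend\`es France criterion, and lift it to $\Z^2$ by Corollary~\ref{linear-transf}(1). The differences are worth recording. For necessity, the paper just quotes Corollary~\ref{lattices} (a set disjoint from the lattice $(q\Z)^2$ cannot be vdC), whereas you exhibit the explicit atomic measure $\sigma=q^{-2}\sum_{0\le j_1,j_2<q}\delta_{(j_1/q,\,j_2/q)}$ whose transform vanishes on $S$ while $\sigma(\{(0,0)\})=q^{-2}>0$, contradicting (S2) of Theorem~\ref{spectralchar}; this is a self-contained verification of the same obstruction. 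More substantively, your case split (is some nontrivial integer combination $\alpha p_1+\beta p_2$ constant?) is sharper than the paper's split (proportional or not): in the paper's non-proportional case, the assertion that $p_1+rp_2$ is not constant is \emph{not} a consequence of non-proportionality alone (consider $p_1=X+1$, $p_2=X$, $r=-1$) but requires the divisibility hypothesis, precisely via your observation that divisibility forces a constant combination to vanish; so your write-up makes explicit a step the paper leaves implicit. Two harmless slips: from $\alpha p_1+\beta p_2=0$ with $(\alpha,\beta)$ primitive one gets $p_1=-\beta r$, $p_2=\alpha r$ (the multiplier pair is the perpendicular vector $(-\beta,\alpha)$, still primitive), not $p_1=\alpha r$, $p_2=\beta r$; and, like the paper, you must set aside the wholly degenerate case $p_1\equiv p_2\equiv 0$, which you do flag.
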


Note that the divisibility condition is satisfied if
$p_{1}$ and $p_{2}$ have zero constant term.

\begin{proof}[Proof of Proposition \ref{polynomials}]
By Corollary \ref{lattices}, the divisibility condition is necessary for the sequence 
$(p_{1}(n),p_{2}(n))$ to be vdC. Let us prove that this condition is sufficient. We are 
going to distinguish two cases: either $p_{1}$
and $p_{2}$ are proportional, or not.

In the first case, there exists a
polynomial $p\in\Z[X]$, and integers $a$, $b$ such that $p_{1}=ap$ and
$p_{2}=bp$. The polynomial $p$ satisfies the divisibility property,
which ensures that $(p(n))$ is a vdC sequence in $\Z$ (it is a direct consequence of the one-dimensional version of Proposition \ref{K-MF3}, cf.
\cite{K-MF}). By the first statement of Corollary \ref{linear-transf}, this implies
that $\left(ap(n),bp(n)\right)$ is a vdC sequence in $\Z^{2}$.

Consider now the second case, in which polynomials $p_{1}$
and $p_{2}$ are not proportional.
Let $q$ be a positive integer and
$(x_{1},x_{2})\in\R^{2}\setminus\Q^{2}$ ; there exists $n\geq1$ such that
$q!|p_{1}(n)$ and $q!|p_{2}(n)$ ; for all $k\in\Z$, we have
$q!|p_{1}(n+kq!)$ and $q!|p_{2}(n+kq!)$. We claim that the sequence
\begin{equation}\label{comb}
\left(p_{1}(n+kq!)x_{1}+p_{2}(n+kq!)x_{2}\right)_{k\in\N}
\end{equation}
is uniformly distributed mod 1. This fact implies, by Proposition \ref{K-MF3}, that
$(p_{1}(n),p_{2}(n))$ is a vdC sequence in $\Z^{2}$.
In order to prove the claim, we consider first the case when 1, $x_1$ and $x_2$ 
are linearly independent over $\Q$ ~; in this case the sequence (\ref{comb}) is u.d. mod 1 
by Weyl's theorem. Let us consider now the case in which 1, $x_1$ and $x_2$ are linearly dependent 
over $\Q$ and $x_1$ is irrational ; in this case we have $x_2=rx_1+s$, with $r,s\in\Q$, and, if $q$ has been chosen enough large, the sequence (\ref{comb}) has (mod 1) the form 
$$
\left((p_{1}(n+kq!)+rp_{2}(n+kq!))x_{1}\right)_{k\in\N}\;;
$$
we conclude once more by Weyl's theorem since the polynomial $p_1+rp_2$ is not constant. Finally, if $x_1$ is rational, then $x_2$ is irrational and the argument is similar.
\end{proof}
\begin{remark} (See Appendix) There exist pairs of polynomials $p_1, p_2$ satisfying:

 - for all integers $a$ and $b$ and for all positive integers $q$, there exists $n$ 
such that $q\mid ap_1(n)+bp_2(n)$ (hence $(ap_1(n)+bp_2(n))_{n\in\N}$ is a vdC sequence in $\Z$).
 
 - there exists a positive integer $q$ such that for no $n$ are the numbers $p_1(n)$ and 
$p_2(n)$ simultaneously multiples of $q$ (hence $(p_1(n),p_2(n))_{n\in\N}$ is not a vdC sequence in $\Z^2$). \\
 \end{remark}

Let $P$ be the set of prime numbers. It is shown in \cite{K-MF} that $P-~1$ and $P+1$ 
are vdC sets, and that no other translate of $P$ is a vdC set. This can be extended to 
polynomials along $P-1$ and $P+1$, and to the multidimensional setting. For example, 
we have the following result.
\begin{proposition}\label{pp}
Let $f,g$ be two (non zero) polynomials with integer coefficients and zero constant term. 
The set $\{(f(p-1),g(p-1))\,:\, p\in P\}$ is a vdC set in $\Z^2$.
\end{proposition}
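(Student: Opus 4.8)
The plan is to verify the hypotheses of the Kamae--Mend\`es France criterion (Proposition~\ref{K-MF3}) for the set $D:=\{(f(p-1),g(p-1)):p\in P\}$, treating separately the cases in which $f$ and $g$ are proportional and in which they are not. First I set up the sequences demanded by that criterion. Fix $q\in\N$. By Dirichlet's theorem there are infinitely many primes $p\equiv1\pmod{q!}$; enumerating them as $p_{q,1}<p_{q,2}<\cdots$ I put $h^{q,n}:=(f(p_{q,n}-1),g(p_{q,n}-1))$. Since $f$ and $g$ have zero constant term, $m$ divides $f(m)$ and $g(m)$ in $\Z[m]$, so $q!\mid(p_{q,n}-1)$ forces $q!\mid f(p_{q,n}-1)$ and $q!\mid g(p_{q,n}-1)$; thus $h^{q,n}\in D_q$. (Discarding the finitely many primes for which $(f(p-1),g(p-1))=(0,0)$ and invoking Corollary~\ref{Ramsey}, I may assume $0\notin D$.) It then remains to check, for each $q$ and each $x=(x_1,x_2)$ with $x_1$ or $x_2$ irrational, that $(h^{q,n}\cdot x)_n$ is uniformly distributed mod $1$; writing $Q(m):=f(m)x_1+g(m)x_2$, this is the assertion that $(Q(p-1))_{p\equiv1\,(q!)}$ is u.d.\ mod $1$.

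Suppose first that $f$ and $g$ are not proportional. I claim $Q$ then has an irrational coefficient in some positive degree. Indeed, writing $f(m)=\sum_j a_jm^j$ and $g(m)=\sum_j b_jm^j$, the degree-$j$ coefficient of $Q$ is $a_jx_1+b_jx_2$; were all of these (for $j\geq1$) rational, then choosing $j,k$ with $a_jb_k-a_kb_j\neq0$ (possible by non-proportionality) and solving the resulting invertible integer linear system would force $x_1,x_2\in\Q$, a contradiction. With such an irrational coefficient in hand, and noting that $kQ$ inherits an irrational positive-degree coefficient for every nonzero integer $k$, the uniform distribution of $(Q(p-1))_{p\equiv1\,(q!)}$ follows via Weyl's criterion from the equidistribution of the values of a real polynomial at shifted primes running through a fixed arithmetic progression --- the prime analogue of Weyl's equidistribution theorem due to Vinogradov and Rhin, together with its refinement to primes in arithmetic progressions, obtained by the same exponential-sum method.

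When $f$ and $g$ \emph{are} proportional, Proposition~\ref{K-MF3} cannot be applied directly: for $x_2=rx_1+s$ with $r$ the (rational) proportionality ratio, $Q(p-1)$ reduces to a rational-coefficient polynomial evaluated at primes and is not u.d. So I reduce instead to dimension one. Write $f=a\,h$, $g=b\,h$ with $h\in\Z[X]$ non-constant and of zero constant term and $a,b\in\Z$, and let $L\colon\Z\to\Z^2$ be the injective map $L(m)=(am,bm)$. The one-dimensional case of the proposition --- namely that $\{h(p-1):p\in P\}$ is a vdC set in $\Z$, the polynomial extension of the result of \cite{K-MF} that $P-1$ is vdC --- furnishes a vdC set $D'\subset\Z\setminus\{0\}$; since $L$ is injective, $0\notin L(D')$, so Corollary~\ref{linear-transf} shows $L(D')$ is a vdC set in $\Z^2$. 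As $L(D')$ and $D$ agree up to the finitely many primes with $h(p-1)=0$, Corollary~\ref{Ramsey} yields that $D$ is a vdC set, finishing this case.

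The main obstacle is the analytic input in the non-proportional case: the uniform distribution modulo one of $(Q(p-1))$ as $p$ ranges over a fixed residue class mod $q!$. This requires the equidistribution of polynomial values at primes in arithmetic progressions, a genuinely number-theoretic ingredient, in contrast to the elementary appeal to Weyl's theorem that sufficed for the $\Z$-sequences handled in Proposition~\ref{polynomials}.
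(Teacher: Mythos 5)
Your proof is correct and follows essentially the same route as the paper's: the same case split according to whether $f$ and $g$ are proportional, with the non-proportional case handled by Proposition~\ref{K-MF3} together with the Vinogradov--Rhin equidistribution theorem for primes in the progression $1+q!\N$ (Theorem~\ref{vino-rhin}), and the proportional case reduced to the one-dimensional statement and lifted by Corollary~\ref{linear-transf}; your determinant argument producing an irrational positive-degree coefficient of $f(m)x_1+g(m)x_2$ is a slightly cleaner verification of the hypothesis of Theorem~\ref{vino-rhin} than the case analysis on rational dependence of $1,x_1,x_2$ that the paper carries out in the parallel Proposition~\ref{polynomials} (and it shows the paper's restriction to ``large enough $q$'' is not needed). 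One small slip: your two appeals to Corollary~\ref{Ramsey} go in the wrong direction (that corollary passes from a vdC set to its subsets), but the fact you actually need --- that a superset, inside $\Z^2\setminus\{0\}$, of a vdC set is again vdC --- is immediate from Definition~\ref{vdc-def}, so nothing is lost.
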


The proof of this proposition relies on Proposition \ref{K-MF3} and on the following Vinogradov-type theorem. 
\begin{theorem}\label{vino-rhin}
Let $q$ be a positive integer and $h$ be a real polynomial such that the polynomial 
$ h- h(0)$ has at least one irrational coefficient. The sequence $(h(p))$ is uniformly 
distributed mod 1, where $p$ describes the increasing sequence of prime numbers in the 
congruence class $1+q\N$.
\end{theorem}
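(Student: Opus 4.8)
The plan is to verify Weyl's criterion for the sequence $\big(h(p)\big)$, where $p$ runs through the increasing enumeration of the primes congruent to $1$ modulo $q$. Two harmless reductions come first. Adding the constant $h(0)$ merely translates every term modulo $1$, so I may assume $h(0)=0$. And for a nonzero integer $k$ the polynomial $kh$ again has an irrational non-constant coefficient, since multiplication by $k\neq0$ preserves irrationality; it therefore suffices to prove that
\[
\frac1{\#\{p\le N:\,p\equiv1\ (q)\}}\sum_{\substack{p\le N\\ p\equiv1\ (q)}}e\big(h(p)\big)\longrightarrow0,
\]
because applying this to $kh$ controls the $k$-th Weyl sum, the passage between counting terms by index and counting primes by size being immediate. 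Since the prime number theorem in arithmetic progressions gives $\#\{p\le N:p\equiv1\ (q)\}\sim N/(\varphi(q)\log N)$, the goal becomes the cancellation estimate $S(N):=\sum_{p\le N,\,p\equiv1\,(q)}e(h(p))=o(N/\log N)$.

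To detect the congruence I would expand it in Dirichlet characters modulo $q$: for $\gcd(n,q)=1$ one has $\mathbf 1_{n\equiv1\,(q)}=\varphi(q)^{-1}\sum_{\chi}\chi(n)$, whence, up to an $O(1)$ contribution from the primes dividing $q$,
\[
S(N)=\frac1{\varphi(q)}\sum_{\chi\bmod q}\ \sum_{p\le N}\chi(p)\,e\big(h(p)\big).
\]
It is thus enough to obtain $\sum_{p\le N}\chi(p)e(h(p))=o(N/\log N)$ for each character $\chi$. Replacing the sum over primes by a sum of $\Lambda(n)\chi(n)e(h(n))$ over $n\le N$ (prime powers contribute $O(\sqrt N\log N)$), I would run Vinogradov's method via Vaughan's identity, decomposing this weighted sum into boundedly many \emph{Type I} sums $\sum_{d\le D}a_d\sum_m\chi(dm)e(h(dm))$ and \emph{Type II} (bilinear) sums $\sum_d\sum_m b_dc_m\,\chi(dm)e(h(dm))$. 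The character causes no trouble: being periodic modulo $q$, it is constant on residue classes, so splitting the inner variable modulo $q$ turns each inner sum into an honest polynomial exponential sum $\sum_t e\big(h(\cdots(qt+r))\big)$. For the Type I sums this inner polynomial in $t$ still has an irrational non-constant coefficient — affine substitution with rational coefficients preserves that property — so by Weyl's theorem the inner sum is $o$ of its length, and summing over the few $d$ preserves the estimate.

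The main obstacle, exactly as in the classical Vinogradov--Rhin argument, is the Type II sums. Here I would apply Cauchy--Schwarz in one variable followed by van der Corput / Weyl differencing in the other, reducing matters to exponential sums governed by the difference polynomials $h(dm)-h(dm')$; the bounded multiplicative weight $\chi(d)\chi(m)$ is carried along after a further splitting into residue classes modulo $q$. The irrationality hypothesis enters through Weyl's method: were the bilinear sum not $o(N)$, some coefficient of the relevant differenced polynomial would admit at every scale $N$ a rational approximation with bounded denominator, and letting $N\to\infty$ would force that coefficient, hence a non-constant coefficient of $h$, to be rational — a contradiction. Reassembling the character decomposition then yields $S(N)=o(N/\log N)$, and Weyl's criterion gives the asserted uniform distribution. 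Alternatively, since the statement is precisely the arithmetic-progression refinement of Rhin's equidistribution theorem for $\big(h(p)\big)$, one may invoke the quantitative (discrepancy) form of that theorem directly, which already spreads the values $h(p)$ uniformly across the residue classes of $p$ modulo $q$.
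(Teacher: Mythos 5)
Your overall strategy --- detect the congruence with Dirichlet characters and then prove a character-twisted bound $\sum_{p\le N}\chi(p)e(h(p))=o(N/\log N)$ by Vinogradov's method --- is a legitimate route in principle, but it is very different from the paper's proof, and as written it has a genuine gap at its analytic core. The paper does not redo any prime exponential sum estimates: it quotes Rhin's theorem (the sequence $(\tilde h(p))_{p\in P}$ is u.d.\ mod $1$ whenever $\tilde h-\tilde h(0)$ has an irrational coefficient) and detects the class $p\equiv1\,[q]$ with \emph{additive} characters,
$$
\sum_{\substack{p\le n\\ p\equiv 1 [q]}}e(h(p))=\frac1q\sum_{j=1}^q e\left(-\frac jq\right)\sum_{p\le n}e\bigl(h(p)+pj/q\bigr)\;,
$$
which works because an additive character can be absorbed into the polynomial: $h(x)+xj/q$ still satisfies Rhin's hypothesis, since only the linear coefficient is shifted, and by a rational. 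A multiplicative character $\chi(p)$ cannot be absorbed this way, which is precisely why your decomposition forces you to re-prove a twisted version of Rhin's theorem from scratch --- and that is where your sketch breaks down.

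The concrete gap is in your Type II analysis: you claim that if the bilinear sums are not $o(N)$, then Weyl differencing forces ``some coefficient of the relevant differenced polynomial'', hence ``a non-constant coefficient of $h$'', to be rational. But Weyl/van der Corput differencing only isolates the \emph{leading} coefficient: differencing a degree-$k$ polynomial $k-1$ times leaves a linear polynomial whose coefficient is $k!\,\alpha_k d_1\cdots d_{k-1}$, so the contradiction you can reach concerns $\alpha_k$ alone. The theorem's hypothesis is that \emph{some} non-constant coefficient is irrational; if the leading coefficient is rational and only a lower one is irrational, your argument as sketched proves nothing, and handling that case (e.g.\ by restricting to progressions modulo the denominators of the top rational coefficients so as to reduce the degree) is exactly the nontrivial content of Rhin's theorem that the paper avoids re-proving. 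Two smaller problems: your Type I estimate invokes Weyl's theorem for each fixed $d$, giving $o(N/d)$ with a rate depending on $d$, whereas summing over a growing range of $d$ requires uniformity; and your closing alternative --- invoking a ``discrepancy form'' of Rhin's theorem --- does not by itself say anything about the subsequence $p\equiv1\,[q]$, so some version of the additive-character trick displayed above is still needed there.
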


The proof of this theorem can be given in a few sentences, by ``quotation''. It is proved in \cite{rhin} (see also \cite{nieder.}) that if a real polynomial $\tilde h$ is such that $\tilde h-\tilde h(0)$ has at least one irrational coefficient, then\begin{equation}\label{vino}\text{the sequence $(\tilde h(p))_{p\in P}$ is u.d. mod 1.}\end{equation} 

Now we can use the following simple trick (cf. \cite{Mont}, p.34) :
$$
\sum_{\substack{p\leq n\\p\equiv1[q]}} e(h(p))=\frac1q\sum_{j=1}^qe\left(-\frac{j}{q}\right)\sum_{p\leq n} e(h(p)+pj/q)\;.
$$
After division by $\pi(n)$, the right term goes to zero as $n$ goes to infinity because (\ref{vino}) can be applied to $\tilde h(p)=h(p)+pj/q$. 

Moreover it is well known that the Prime Number Theorem has a natural extension to the 
distribution of primes in arithmetic progressions : the number of primes less than $n$ 
in $1+q\N$ is asymptotically equivalent to $\pi(n)/\varphi(q)$ as $n$ goes to infinity. 

We obtain that
$$
\lim_{n\to+\infty}\frac1{\#\{p\leq n,\,p\equiv1[q]\}}\sum_{\substack{p\leq n\\p\equiv1[q]}}  
e(h(p))=0\;.
$$
This is still true when we replace $h$ by a non zero integer multiple of $h$, which, 
via Weyl's criterion, gives uniform distribution (mod 1) of the sequence 
$\left((h(p))_{p\in P,\,p\equiv1[q]}\right)$.

\begin{proof}[Proof of Proposition \ref{pp}] This proof is parallel to the proof of 
Proposition \ref{polynomials}. If $f$ and $g$ are proportional, we use the 
fact that $(f(p-1))_{p\in P}$ is a vdC sequence (which is a direct consequence of 
the one-dimensional version of Proposition \ref{K-MF3} and of Theorem \ref{vino-rhin}). 
If $f$ and $g$ are not proportional, we deduce from Theorem \ref{vino-rhin} that for 
all large enough positive integers $q$, and for all $(x_1,x_2)\in\R^2\setminus\Q^2$, 
the sequence 
$
\left(f(p-1)x_1+g(p-1)x_2\right)_{p\in P,\,p\equiv1[q]}
$ is u.d. mod 1. We conclude by Proposition \ref{K-MF3}.
\end{proof}
 Several other examples of vdC sets are presented in Subsection \ref{ex-enh-vdc}.
\subsubsection{One more corollary \`a la Ruzsa}

Following \cite{R}, we deduce from Proposition~\ref{K-MF1} a new combinatorial property of vdC sets.
\begin{corollary}[Cf. \cite{R}, Corollary 3]\label{last}
Any vdC set in $\Z^{2}$ can be partitionned into infinitely many
pairwise disjoint vdC sets.
\end{corollary}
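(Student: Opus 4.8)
The plan is to invoke the Kamae--Mend\`es France criterion (Proposition \ref{K-MF1}), which reduces the vdC property to the existence, for every $\epsilon>0$, of a real trigonometric polynomial with \emph{finite} spectrum contained in the set, taking value $1$ at the origin and bounded below by $-\epsilon$. The finiteness of these spectra is exactly what makes a partition feasible: each witness polynomial ``consumes'' only finitely many elements of $D$, so fresh witnesses can be extracted indefinitely.

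First I would record two closure properties. (i) By the parenthetical remark in Corollary \ref{Ramsey}, removing a finite subset from a vdC set leaves a vdC set; hence at every finite stage of a construction the still-unused part of $D$ remains vdC. (ii) Any subset of $\Z^2\setminus\{0\}$ that contains a vdC set is itself vdC; this is immediate from the spectral characterization (Theorem \ref{spectralchar}), since enlarging the set only strengthens the hypothesis ``$\widehat\sigma$ vanishes on the set'', so the conclusion ``$\sigma$ has no mass at $0$'' persists.

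Next I would build the pieces $E_1,E_2,\ldots$ by a diagonal procedure. Fix a bijection $t\mapsto(k(t),m(t))$ from $\N$ onto $\N\times\N$ and process $t=1,2,\ldots$, maintaining a finite ``used'' set $U_{t}$ of frequencies (with $U_0=\emptyset$). At step $t$, apply Proposition \ref{K-MF1} with $\epsilon=1/m(t)$ to the set $D\setminus U_{t-1}$, which is vdC by closure property (i); this yields a trigonometric polynomial $P_t$ with $P_t(0)=1$, $P_t\geq-1/m(t)$, and finite spectrum in $\pm S_t$ for some finite $S_t\subset D\setminus U_{t-1}$. Note $S_t\neq\emptyset$, since $0\notin D$ forces $P_t$ to have no constant term while $P_t(0)=1$. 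Assign $S_t$ to $E_{k(t)}$ and put $U_t=U_{t-1}\cup S_t$. By construction the $E_k$ are pairwise disjoint, and for each fixed $k$ the indices $t$ with $k(t)=k$ realize every value $m(t)=m$, so $E_k$ contains, for every $m$, a witness polynomial of level $1/m$ with spectrum in $E_k$. Hence Proposition \ref{K-MF1} shows that each $E_k$ is a vdC set.

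Finally I would place every element of $D$ not used by any $S_t$ into $E_1$. By closure property (ii) the enlarged $E_1$ is still vdC, the family $(E_k)_{k\in\N}$ now partitions $D$, and since $k$ ranges over all of $\N$ there are infinitely many pieces (each nonempty, in fact infinite, since no finite set is vdC). The one point requiring care --- and the crux of the argument --- is the interplay of the two closure properties with the finiteness of the spectra: it is precisely because each witness is finite that the unused part of $D$ stays vdC at every finite stage, which is what guarantees that the next witness exists.
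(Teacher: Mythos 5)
Your proof is correct and follows essentially the same route as the paper: both extract, via the direct implication of Proposition \ref{K-MF1} together with the finite-removal consequence of Corollary \ref{Ramsey}, a sequence of pairwise disjoint finite spectra carrying witness polynomials at levels tending to $0$, and then apply the converse implication of Proposition \ref{K-MF1} to conclude each piece is vdC. Your diagonal bookkeeping (and the explicit absorption of the unused part of $D$ into $E_1$) just spells out details the paper leaves implicit in ``any infinite union of the $I_k$'s is a vdC set.''
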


\begin{proof} Let $D$ be a vdC set in $\Z^{2}$.
There exists a sequence $(I_{k})_{k\geq1}$ of pairwise disjoint
finite subsets of $D$, and for each $k$, a trigonometric polynomial
$P_{k}$ with spectrum in $I_{k}$ and such that $P_{k}(0)=1,
P_{k}+\frac1k>0$. The existence of $I_{k}$ and $P_{k}$ can be proved
by induction using the direct implication in Proposition \ref{K-MF1}
and the fact that, for each $k$, the set $D\setminus\left(I_{1}\cup
I_{2}\cup\ldots\cup I_{k}\right)$ is vdC (see Corollary \ref{Ramsey}).
From
the inverse implication in Proposition \ref{K-MF1}, we deduce that
any infinite union of the $I_{k}$'s is a vdC set. We can consider an
infinite family of pairwise disjoint such sets.
\end{proof}

\subsection{Positive-definite multiparameter sequences and generalized vdC
inequality}\label{inequality}
\subsubsection{The inequality}

We show in this subsection that the Kamae-Mend\`es France criterion can
be formulated in terms of positive-definite sequences. This will allow us, for a given 
vdC set $D$, to obtain a quantitative van
der Corput type inequality in which only correlations $\gamma(N,d)$ for $d\in D$ are involved. 

\begin{proposition}\label{pos-def}
        Let $(a_{h})_{h\in\Z^{2}}$ be a family of complex
numbers such that all but finitely many of $a_h$ are zero. This family is positive-definite
if and only if the trigonometric polynomial $T(x):=\sum_{h}a_{h}e(h\cdot x)$,
$x\in\R^{2}$, takes only nonnegative values.
\end{proposition}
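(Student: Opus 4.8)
The plan is to prove both implications directly, viewing the statement as a finite, elementary counterpart of the Bochner--Herglotz theorem that underlies the rest of the section. Throughout I write $h\cdot x=h_1x_1+h_2x_2$ and I will use the fact that, by orthogonality of the characters on $\T^2$, the coefficients are recovered from $T$ via
\[
a_h=\int_{\T^2}e(-h\cdot x)\,T(x)\,\text{d}x\;.
\]

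For the implication ``$T\geq0\Rightarrow$ positive-definite'' I would simply expand the defining quadratic form. Given finitely many indices $h_1,\ldots,h_k\in\Z^2$ and scalars $c_1,\ldots,c_k\in\CC$, I substitute the integral formula for each $a_{h_i-h_j}$ and interchange the (finite) sum with the integral to obtain
\[
\sum_{i,j}c_i\overline{c_j}\,a_{h_i-h_j}=\int_{\T^2}T(x)\,\Big|\sum_i c_i\,e(-h_i\cdot x)\Big|^2\,\text{d}x\;,
\]
which is $\geq0$ precisely because $T\geq0$. This is the routine half; note that Hermitian symmetry $a_{-h}=\overline{a_h}$ is not needed here since $T\geq0$ is already real.

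The converse ``positive-definite $\Rightarrow T\geq0$'' is where I expect the only genuine work, and the idea is Fej\'er-type averaging. Fix $x_0\in\R^2$; the goal is $T(x_0)\geq0$. I would test positive-definiteness on the box $B_M=\{n\in\Z^2:0\leq n_1,n_2<M\}$ with the coefficients $c_n=e(n\cdot x_0)$, which gives
\[
0\leq\sum_{n,m\in B_M}c_n\overline{c_m}\,a_{n-m}=\sum_{h}\#\{(n,m)\in B_M^2:n-m=h\}\;e(h\cdot x_0)\,a_h\;.
\]
The number of pairs realizing a prescribed difference $h=(h_1,h_2)$ equals $(M-|h_1|)_+(M-|h_2|)_+$, so after dividing by $M^2$ the coefficient of $e(h\cdot x_0)\,a_h$ is the triangular weight $w_M(h)=\frac{(M-|h_1|)_+(M-|h_2|)_+}{M^2}$, satisfying $0\leq w_M(h)\leq1$ and $w_M(h)\to1$ for each fixed $h$. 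Since only finitely many $a_h$ are nonzero, letting $M\to+\infty$ yields $0\leq\sum_h e(h\cdot x_0)\,a_h=T(x_0)$, and as $x_0$ is arbitrary this gives $T\geq0$.

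The main obstacle is thus purely a matter of selecting the right averaging scheme: the Fej\'er weights emerge automatically from counting lattice pairs inside a square, and their convergence to $1$ (together with the finiteness of the support of $(a_h)$) is exactly what upgrades positivity of the difference-form into pointwise positivity of $T$. An alternative would be to invoke Bochner--Herglotz to write $(a_h)=\widehat{\nu}$ for a positive measure $\nu$ on $\T^2$ and to observe that finiteness of the support of the Fourier coefficients forces $\nu$ to be absolutely continuous with a trigonometric-polynomial density equal, up to reflection, to $T$; but the Fej\'er averaging keeps the argument self-contained and avoids appealing to the full strength of Bochner--Herglotz.
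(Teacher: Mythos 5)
Your proof is correct and follows essentially the same route as the paper: the forward direction is the standard quadratic-form expansion (the paper phrases it as ``$(a_h)$ is the Fourier transform of the measure with density $T$''), and your converse via Fej\'er-type averaging over the box $B_M$ with test coefficients $c_n=e(n\cdot x_0)$ is exactly the paper's computation with the weights $(c-|h_1|)(c-|h_2|)/c^2$. Nothing to add.
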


\begin{proof}
        Recall that the family $(a_{h})$ of complex numbers
        is positive-definite if, for any
        family $(z_{h})_{h\in\Z^{2}}$ of complex numbers, all zero but finitely
        many,
        $$
        \sum_{h,h'\in\Z^{2}}a_{h-h'}z_{h}\overline{z_{h'}}\geq0\;.
        $$
        We will denote $h=(h_{1},h_{2})$.

        The family $(a_{h})$ is the Fourier transform of the measure having
        density $T$ with respect to Lebesgue measure on the 2-torus.
        Thus it is clear that if the trigonometric polynomial is
        positive, then the family is positive-definite.
In the opposite direction, suppose that $(a_{h})$ is positive-definite (and that
        $a_{h}=0$ for all $h$ but finitely many). For $x\in\mathbb R^2$ and for all positive integers $c$,
     $$
        \sum_{0\leq h,h'<(c,c)}a_{h-h'}e(h\cdot x)e(-h'\cdot x)\geq0\;.
        $$
This can be written
$$
\sum_{(-c,-c)<h<(c,c)}(c-|h_{1}|)(c-|h_{2}|)a_{h}e(h\cdot x)\geq0\;.
$$
Dividing this expression by $c^{2}$, and letting $c$ go to infinity,
we obtain
$$
\sum_{h}a_{h}e(h\cdot x)\geq0\;.
$$
\end{proof}
\begin{remark}\label{pos-def-crit}
        The Kamae - Mend\`es France criterion (Proposition \ref{K-MF1})
can now be rewritten as follows : a subset $D$ of $\Z^2\setminus\{0\}$ is a
vdC set if and only if, for all $\epsilon>0$, there exists a positive-definite family $(a_{d})_{d\in\Z^2}$ such that :
\begin{itemize}\item
all but finitely many $a_{d}$ are zero;
\item
$a_{d}=0$ whenever $d\neq 0$ and $d\notin D\cup(-D)$;
\item
$a_{0}\leq\epsilon$ and $\sum_{d}a_{d}=1$.
\end{itemize}\end{remark}
\medbreak
As in the first section, we will denote
$$
\gamma(N,h):=\sum_{\begin{subarray}{1}0< n\leq N\\0< n+h\leq N\end{subarray}}
u_{n+h}\cdot\overline{u_{n}}\;,
$$
if $h\in\Z^2$, $N\in\N^{2}$ and $(u_{n})_{0\leq n<N}$ is a family of
complex numbers. We will write also
$$
\|u\|_{\infty}:=\max_{n}|u_{n}|\;.
$$
\begin{theorem} \label{gen.vdC.ineq}
        Let $H\in\N^{2}$ and $(a_{h})_{-H<h<H}$ be a finite
positive-definite family of complex numbers, with $\sum_{h}a_{h}=1$.
Let $N\in\N^{2}$ and $(u_{n})_{0<n\leq N}$ be a finite family of
complex numbers. We have
$$
\Big|\sum_{0<n\leq N}u_{n}\Big|^{2}\leq N_{1}N_{2} \Big(
\sum_{h}a_{h}\gamma(N,h)
+5\|u\|_{\infty}^{2}\sum_{h}(|h_{1}|N_{2}+|h_{2}|N_{1}
+|h_{1}h_{2}|)|a_{h}|\Big).
$$
\end{theorem}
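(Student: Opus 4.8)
The plan is to follow the same route that produced the abstract van der Corput inequality (\ref{vdca}), but to replace the ``flat'' averaging over a box $D$ by a weighted averaging dictated by the positive-definite family $(a_h)$. The starting point is Proposition \ref{pos-def}: since $(a_h)_{-H<h<H}$ is positive-definite, by Bochner's direction of that proposition there exists a finite family of complex numbers $(b_k)_{0\le k<H}$ (the ``square root'' of the trigonometric polynomial, obtained by writing the associated measure's density as $|\sum_k b_k e(k\cdot x)|^2$ and matching Fourier coefficients) such that $a_h=\sum_{k}b_{k+h}\overline{b_k}$, with the convention $b_k=0$ outside $[0,H)$. The normalization $\sum_h a_h=1$ becomes $\sum_k |b_k|^2=1$. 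This factorization is exactly the multidimensional surrogate for Fejér's theorem: we do not need to factor the polynomial itself, only to realize the coefficient sequence $(a_h)$ as an autocorrelation, which is automatic from positive-definiteness.

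First I would form the weighted sum $S:=\sum_{0<n\le N}\sum_{0\le k<H} b_k\, u_{n+k}$ and compare it to $\big(\sum_k b_k\big)\sum_{0<n\le N}u_n$. The point is that $S$ differs from $\big(\sum_k b_k\big)\sum_n u_n$ only through boundary terms: for each fixed $k$, the index set of $u_{n+k}$ is the box $(0,N]$ shifted by $k$, which overlaps $(0,N]$ in all but a boundary layer of size at most $|k_1|N_2+|k_2|N_1+|k_1k_2|$ cells. Thus, after applying the Cauchy–Schwarz inequality to $S$ in the form $|S|^2\le \big(\sum_k|b_k|^2\big)\cdot \|{\cdot}\|^2$ exactly as in the proof of the abstract inequality, expanding the resulting double sum over $k,k'$ and collecting terms by $h=k-k'$, the ``bulk'' contribution is precisely $N_1N_2\sum_h a_h\gamma(N,h)$, while the discrepancy between $S$ and the genuine object $\big|\sum_{0<n\le N}u_n\big|^2$ is controlled by $\|u\|_\infty^2$ times the total boundary size $\sum_h(|h_1|N_2+|h_2|N_1+|h_1h_2|)|a_h|$. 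The numerical constant $5$ is the slack absorbed when one bounds all the boundary corrections — both the mismatch in $S$ itself and the edge terms in reassembling $\sum_k b_{k+h}\overline{b_k}=a_h$ — crudely by $|a_h|$ and $\|u\|_\infty^2$.

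The main obstacle will be the careful bookkeeping of the boundary layers, rather than any conceptual difficulty. One must track that replacing each shifted box by the common box $(0,N]$ costs at most $\|u\|_\infty^2$ per lost lattice point, and that the number of lost points for the pair $(k,k')$ contributing to a given $h=k-k'$ is bounded by the stated expression $|h_1|N_2+|h_2|N_1+|h_1h_2|$. I expect to estimate each overlap region in $\Z^2$ by splitting it into a horizontal strip of width $|h_1|$ and height $N_2$, a vertical strip of width $N_1$ and height $|h_2|$, and a corner rectangle of area $|h_1h_2|$, which is exactly where those three terms come from. A small amount of care is also needed because $b$ is supported on $[0,H)$ while the correlations $\gamma(N,h)$ are the unnormalized ones defined at the start of this subsection; the key identity $\sum_k b_{k+h}\overline{b_k}=a_h$ must be reconciled with $\gamma(N,h)$ by commuting the sum over $n$ inside, and the resulting edge discrepancies are again absorbed into the $5\|u\|_\infty^2$ term. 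Once these elementary geometric bounds are in place, the inequality follows by combining the Cauchy–Schwarz step with the triangle inequality.
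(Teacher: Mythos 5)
There is a genuine gap at the foundation of your argument: the claimed factorization $a_h=\sum_k b_{k+h}\overline{b_k}$ with $(b_k)$ finitely supported is \emph{not} automatic from positive-definiteness in two variables. By Proposition \ref{pos-def}, positive-definiteness of the finitely supported family $(a_h)$ is equivalent to nonnegativity of the trigonometric polynomial $T(x)=\sum_h a_h e(h\cdot x)$; and realizing $(a_h)$ as the autocorrelation of a finitely supported family $(b_k)$ is equivalent to writing $T(x)=\bigl|\sum_k b_k e(k\cdot x)\bigr|^2$, i.e.\ to the Fej\'er--Riesz factorization of $T$. That factorization is a genuinely one-dimensional phenomenon: in two or more variables there exist nonnegative trigonometric polynomials which are not squared moduli (nor even finite sums of squared moduli) of trigonometric polynomials. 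This is exactly the obstruction the paper flags at the start of Section 1 as the one point where the one-dimensional theory (Ruzsa's argument, which is what you have reproduced) does not carry over, and it is the reason Subsection \ref{inequality} takes a different route. A secondary slip: even granting the factorization, the normalization $\sum_h a_h=1$ would translate into $\bigl|\sum_k b_k\bigr|^2=1$, since $\sum_h a_h=T(0)$; the quantity $\sum_k|b_k|^2$ equals $a_0$, not $1$.

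The paper's proof avoids any factorization. Set $m:=\frac1{N_1N_2}\sum_{0<n\le N}u_n$ and $v_n:=u_n-m$, extended by zero outside the box, and expand $\gamma(N,h)=A_h+B_h+C_h+D_h$ according to $u=v+m$. The key term $\sum_h a_h A_h=\sum_{n,n'}a_{n-n'}v_n\overline{v_{n'}}$ is nonnegative \emph{directly} by the definition of positive-definiteness applied to the finitely supported family $(v_n)$ --- no square root of $(a_h)$ is needed. The cross terms $B_h,C_h$ are small because $\sum_{0<n\le N}v_n=0$ reduces them to boundary sums, and $D_h$ reproduces the main term $\frac1{N_1N_2}\bigl|\sum_{0<n\le N} u_n\bigr|^2$ up to boundary corrections; the geometric bookkeeping with the bound $|h_1|N_2+|h_2|N_1+|h_1h_2|$ that you describe does appear there, and that part of your proposal is sound. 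If you want to salvage your approach, the fix is precisely this: apply positive-definiteness to the centered data $(v_n)$ rather than trying to factor $(a_h)$.
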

This inequality should be compared to the ``generalized van der Corput
Lemma" stated in \cite{Mont} (Chap.2, Lemma 1).

If we consider a bounded family of complex numbers $(u_{n})_{n\in\N^2}$,
we deduce from Theorem \ref{gen.vdC.ineq} the following inequality
$$
\left|\frac1{N_{1}N_{2}}\sum_{0<n\leq N}u_{n}\right|^{2}\leq
\sum_{h}a_{h}\frac1{N_{1}N_{2}}\gamma(N,h)+
O\left(\max\left(\frac1{N_{1}},\frac1{N_{2}}\right)\right)\;,
$$
which will be utilized when describing the vdC property of Cartesian products of vdC sets.

Corollary \ref{quant} below, which is a direct consequence of Theorem 
\ref{gen.vdC.ineq}, gives what one might call a quantitative version of the 
van der Corput trick. The ``if'' part of the Kamae-Mend\`es France criterion is a direct
consequence of this corollary.

\begin{corollary}\label{quant}
         Let $(a_{h})_{-H<h<H}$ be a
positive-definite family of complex numbers, and $(u_{n})_{n\in\N^{2}}$
be a family of complex numbers. If, for any $h$ such that $h\neq0$ and
$a_{h}\neq0$ we have
$$
\lim_{N_{1},N_{2}\to+\infty}\frac1{N_{1}N_{2}}\gamma(N,h)=0
$$
then
$$
\limsup_{N_{1},N_{2}\to+\infty}\left|\frac1{N_{1}N_{2}}\sum_{0<
n\leq N}u_{n}\right|\leq \|u\|_{\infty}\sqrt{a_{0}}\;.
$$
\end{corollary}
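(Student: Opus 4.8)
The plan is to read Corollary~\ref{quant} off Theorem~\ref{gen.vdC.ineq} by dividing that inequality by $(N_1N_2)^2$ and passing to the $\limsup$. Two preliminary observations fix the framework. First, since $(a_h)$ is positive-definite, Proposition~\ref{pos-def} tells us that the trigonometric polynomial $T(x)=\sum_h a_h e(h\cdot x)$ is nonnegative, whence $a_0=\int_{\T^2}T\,\mathrm dx\geq0$ is a nonnegative real number; this is what will let me bound the diagonal term below. Second, the corollary inherits from Theorem~\ref{gen.vdC.ineq} the normalization $\sum_h a_h=1$, so that theorem applies to our family directly.

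Applying it and dividing by $(N_1N_2)^2$ gives
$$
\left|\frac1{N_1N_2}\sum_{0<n\leq N}u_n\right|^2\leq \sum_h a_h\,\frac{\gamma(N,h)}{N_1N_2}+5\|u\|_\infty^2\sum_h\left(\frac{|h_1|}{N_1}+\frac{|h_2|}{N_2}+\frac{|h_1h_2|}{N_1N_2}\right)|a_h|\,.
$$
I would then take $\limsup_{N_1,N_2\to+\infty}$ and estimate the right-hand side term by term, using crucially that $(a_h)_{-H<h<H}$ is finitely supported. For the error term, each summand tends to $0$ as $N_1,N_2\to+\infty$, and since only finitely many $a_h$ are nonzero the whole sum tends to $0$. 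For the main term I split off $h=0$: each summand with $h\neq0$ vanishes in the limit, being identically zero when $a_h=0$ and tending to $0$ by hypothesis when $a_h\neq0$. The remaining diagonal summand is controlled by the elementary bound $\gamma(N,0)=\sum_{0<n\leq N}|u_n|^2\leq N_1N_2\|u\|_\infty^2$, so, using $a_0\geq0$, one gets $\limsup a_0\,\gamma(N,0)/(N_1N_2)\leq a_0\|u\|_\infty^2$. Combining the three estimates yields $\limsup\bigl|\frac1{N_1N_2}\sum_{0<n\leq N}u_n\bigr|^2\leq a_0\|u\|_\infty^2$, and taking square roots gives the stated conclusion.

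There is no genuine obstacle here; the work is entirely bookkeeping, and the only two points requiring attention are both resolved by hypotheses already in place. The vanishing of the error term relies on the finiteness of the support of $(a_h)$—so that a finite sum of terms each tending to $0$ still tends to $0$—which holds since the family is indexed by $-H<h<H$. The bound on the diagonal term requires knowing in advance that $a_0$ is a nonnegative real, which is exactly what positive-definiteness supplies via Proposition~\ref{pos-def}. Everything else reduces to the trivial estimate $\gamma(N,0)\leq N_1N_2\|u\|_\infty^2$.
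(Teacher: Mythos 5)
Your proof is correct and is exactly the paper's intended argument: the paper offers no separate proof, stating only that Corollary~\ref{quant} is ``a direct consequence of Theorem \ref{gen.vdC.ineq}'', and your derivation (divide by $(N_1N_2)^2$, take $\limsup$, kill the finitely supported error term and the off-diagonal terms, bound $a_0\gamma(N,0)/(N_1N_2)$ by $a_0\|u\|_\infty^2$) is precisely that deduction with the bookkeeping written out. You were also right to import the normalization $\sum_h a_h=1$ from the theorem, since the corollary's statement omits it but is false without it (e.g.\ for the identically zero family, or $a_h=c\,\delta_{h,0}$ with $c<1$).
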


\begin{proof}[Proof of Theorem \ref{gen.vdC.ineq}]
Let us define
$$
m:=\frac1{N_{1}N_{2}}\sum_{0< n\leq N}u_{n}\quad\text{and}\quad
v_{n}:=u_{n}-m\;.
$$
We have
$$
\gamma(N,h)=\sum_{\begin{subarray}{1}0<n\leq N\\0<n+h\leq N\end{subarray}}
(v_{n+h}+m)(\overline{v_{n}}+\overline{m})=A_{h}+B_{h}+C_{h}+D_{h}\;,
        $$
where :
$$
A_{h}:=\sum_{\begin{subarray}{1}0<n\leq N\\0<n+h\leq N\end{subarray}}
v_{n+h}\cdot\overline{v_{n}}\;,
\qquad
B_{h}:=m\sum_{\begin{subarray}{1}0<n\leq N\\0<n+h\leq N\end{subarray}}
 \overline{v_{n}}\;,
$$
$$
C_{h}:=\overline{m}\sum_{\begin{subarray}{1}0<n\leq N\\0<n+h\leq N\end{subarray}}
 v_{n+h}\;,
\qquad
D_{h}:=|m|^{2}\sum_{\begin{subarray}{1}0<n\leq N\\0<n+h\leq N\end{subarray}}
1\;.
$$
Since the family $(a_{h})$ is positive-definite, we have
$$
\sum_{h}a_{h}A_{h}\geq0\;.
$$
The number of points $n$ in the square $[1,N_{1}]\times[1,N_{2}]$, such
that we do not have $0<n+h\leq N$, is less or equal than
$|h_{1}|N_{2}+|h_{2}|N_{1}$. Since
$\displaystyle
\sum_{0<n\leq N}v_{n}=0
$
we deduce that
\begin{multline*}
|B_{h}|\leq|m|\left(|h_{1}|N_{2}+|h_{2}|N_{1}\right)\|v\|_{\infty}\leq
2|m|\left(|h_{1}|N_{2}+|h_{2}|N_{1}\right)\|u\|_{\infty} \\ \leq
2\left(|h_{1}|N_{2}+|h_{2}|N_{1}\right)\|u\|_{\infty}^{2}\;.
\end{multline*}
The same inequality holds for $|C_{h}|$.\\
We have also \begin{multline*}
D_{h}=(N_{1}-|h_{1}|)(N_{2}-|h_{2}|)|m|^{2}\\ \geq
\frac1{N_{1}N_{2}}\Big|\sum_{0< n\leq N}u_{n}\Big|^2 -
\left(|h_{1}|N_{2}+|h_{2}|N_{1} +
|h_{1}h_{2}|\right)\|u\|_{\infty}^{2}\;.
\end{multline*}
    From these inequalities, we deduce that
\begin{multline*}
\sum_{h}a_{h}\gamma(N,h)\geq \sum_{h}a_{h}A_{h} + \sum_{h}a_{h}D_{h}
-\sum_{h}a_{h}\left(|B_{h}|+|C_{h}|\right) \\ \geq
\frac1{N_{1}N_{2}}\Big|\sum_{0<n\leq N}u_{n}\Big|^2-5\sum_{h}|a_{h}|
\left(|h_{1}|N_{2}+|h_{2}|N_{1}
+|h_{1}h_{2}|\right)\|u\|_{\infty}^{2}\;.
\end{multline*}
and the result follows.
\end{proof}

In the next two subsections we present corollaries of Theorem \ref{gen.vdC.ineq}.

\subsubsection{Cartesian products of vdC sets}
\begin{corollary}\label{cartesian}
Let $k$, $\ell$ be positive integers, and $D$, $E$ be vdC
sets in, respectively, $\Z^k$ and $\Z^\ell$. The product set $D\times E$
is a vdC set in $\Z^{k+\ell}$.
\end{corollary}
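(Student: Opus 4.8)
The plan is to verify, for the product set $D\times E\subset\Z^{k+\ell}$, the spectral criterion (S2) of Theorem \ref{spectralchar}: I must show that every positive measure $\sigma$ on $\T^{k+\ell}=\T^k\times\T^\ell$ (which we may take to be a probability measure) whose Fourier transform vanishes on $D\times E$ satisfies $\sigma(\{0\})=0$. Equivalently, one may feed the product family into the generalized inequality of Theorem \ref{gen.vdC.ineq} and pass to a limit through Lemma \ref{class_spect}; this leads to the same reduction. Because $D$ and $E$ are vdC, the Kamae--Mend\`es France criterion in its positive-definite form (Proposition \ref{K-MF1} and Remark \ref{pos-def-crit}) provides, for every $\epsilon>0$, nonnegative trigonometric polynomials $P$ on $\T^k$ and $Q$ on $\T^\ell$, with spectra in $\{0\}\cup\pm D$ and $\{0\}\cup\pm E$, normalized by $P(0)=Q(0)=1$, and with arbitrarily small constant terms $\alpha_0:=\int P\le\epsilon$ and $\beta_0:=\int Q\le\epsilon$. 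The obvious move is then to integrate the tensor product $P(x)Q(y)\ge0$ against $\sigma$.

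First I would confront the one genuine difficulty. The hypothesis gives $\widehat\sigma=0$ only on the ``diagonal'' block $D\times E$ (and, by the conjugate symmetry $\widehat\sigma(-n)=\overline{\widehat\sigma(n)}$, on $(-D)\times(-E)$), while the spectrum of $P(x)Q(y)$ also meets the ``mixed'' blocks $D\times(-E)$ and $(-D)\times E$, about which nothing is known. To kill the mixed blocks I would symmetrize: let $\check\sigma$ be the image of $\sigma$ under $(x,y)\mapsto(x,-y)$ and set $\mu:=\sigma\ast\check\sigma$. Then $\mu\ge0$, and since $\widehat\mu(d,e)=\widehat\sigma(d,e)\,\widehat\sigma(d,-e)$, the transform $\widehat\mu$ now vanishes on the full grid $(\pm D)\times(\pm E)$. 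Moreover $\mu(\{0\})\ge\sigma(\{0\})^2$, so it is enough to prove $\mu(\{0\})=0$.

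Now I would simply compute. Writing $P=\sum_d\alpha_d e(d\cdot)$ and $Q=\sum_e\beta_e e(e\cdot)$, and using that every term with both $d\in\pm D$ and $e\in\pm E$ drops out, only the ``axis'' terms $d=0$ or $e=0$ survive, giving the identity $\int P(x)Q(y)\,d\mu=\alpha_0\int Q\,d\mu_2+\beta_0\int P\,d\mu_1-\alpha_0\beta_0$, where $\mu_1,\mu_2$ are the marginals of $\mu$. On the other hand $PQ\ge0$ together with $P(0)Q(0)=1$ forces $\int PQ\,d\mu\ge\mu(\{0\})$. Combining, $\mu(\{0\})\le\alpha_0\int Q\,d\mu_2+\beta_0\int P\,d\mu_1$, and the goal becomes to make the right-hand side tend to $0$ as $\alpha_0,\beta_0\to0$.

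The step I expect to be the main obstacle is exactly this last estimate. Bounding $\int Q\,d\mu_2\le\|Q\|_\infty$ and $\int P\,d\mu_1\le\|P\|_\infty$ reduces the task to choosing the kernels so that $\alpha_0\|Q\|_\infty$ and $\beta_0\|P\|_\infty$ are simultaneously small, and there is a real tension because driving the mean $\int P$ to $0$ tends to inflate $\|P\|_\infty$. The clean way to finish is to fix $Q$ (hence $\beta_0$ and the finite number $\int Q\,d\mu_2$), let $\alpha_0\to0$ along kernels $P$ for $D$ whose integrals $\int P\,d\mu_1$ stay bounded, which annihilates the first term and leaves $\mu(\{0\})\le\beta_0\cdot C$, and only then let $\beta_0\to0$. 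Thus everything hinges on the quantitative lemma that a vdC set admits nonnegative KMF kernels with vanishing mean and uniformly bounded size (bounded sup-norm, equivalently bounded Wiener norm $\sum_d|\alpha_d|$); establishing this controlled version of the Kamae--Mend\`es France criterion, rather than the algebraic bookkeeping above, is the substantive part of the argument.
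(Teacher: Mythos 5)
Your reductions are sound as far as they go: the passage to criterion (S2) of Theorem \ref{spectralchar}, the symmetrization $\mu:=\sigma\star\check\sigma$ (which legitimately upgrades the hypothesis from $D\times E$ to the full grid $(\pm D)\times(\pm E)$ while keeping $\mu(\{0\})\ge\sigma(\{0\})^2$), the identity $\int PQ\,\text{d}\mu=\alpha_0\int Q\,\text{d}\mu_2+\beta_0\int P\,\text{d}\mu_1-\alpha_0\beta_0$, and the lower bound $\int PQ\,\text{d}\mu\ge\mu(\{0\})$ are all correct. But the proposal is not a proof: everything is made to rest on a ``controlled Kamae--Mend\`es France lemma'' (kernels $P$ for $D$ with $\int P\to 0$ while $\int P\,\text{d}\mu_1$, or $\|P\|_\infty$, or the Wiener norm $\sum_d|\widehat P(d)|$, stays bounded), which you do not prove and which does not follow from anything in the paper. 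Proposition \ref{K-MF1} is obtained by Hahn--Banach separation and yields no size control whatsoever; positive-definiteness gives only $|\widehat P(d)|\le\widehat P(0)=\alpha_0$, so the Wiener norm is bounded only by $\alpha_0$ times the number of frequencies of $P$, a quantity that is always at least $P(0)=1$ and has no a priori upper bound as $\alpha_0\to0$. Note also that your lemma, in its universal form, would imply that every vdC set $D$ satisfies $\sigma(\{0\})\le C\sup_{d\in D}|\widehat\sigma(d)|$ for every positive measure $\sigma$ --- a uniform property of ``nice FC$^{+}$'' type, and the paper deliberately treats such quantitative strengthenings as distinct notions whose relation to plain vdC sets is left open (Section \ref{vdc-recur}). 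So the step you defer is genuinely the heart of the matter, not bookkeeping.

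For comparison, the paper never leaves the sequence side: it takes the positive-definite families $(a_d)$, $(b_e)$ of Remark \ref{pos-def-crit} (the coefficient form of your $P$ and $Q$), observes that the tensor family $(a_db_e)$ is positive-definite on $\Z^{k+\ell}$, and feeds it into the generalized van der Corput inequality (Theorem \ref{gen.vdC.ineq}). The terms with $d\ne0\ne e$ disappear by hypothesis, and the surviving ``axis'' terms --- the exact counterpart of your $\alpha_0\int Q\,\text{d}\mu_2+\beta_0\int P\,\text{d}\mu_1$ --- are disposed of by the computation $\sum_{d\text{ or }e=0}a_db_e=a_0\sum_e b_e+b_0\sum_d a_d-a_0b_0=a_0+b_0-a_0b_0\le 2\epsilon$, i.e.\ using only the normalizations $\sum_d a_d=\sum_e b_e=1$ and never the absolute sums $\sum_d|a_d|$, $\sum_e|b_e|$. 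That normalization device is what your argument lacks. Two further remarks: your symmetrization is a cleaner treatment of the sign problem than the paper's one-line assertion that (\ref{H}) persists for $d\in(-D)$ or $e\in(-E)$; and the delicate point in both arguments is the very one you isolated --- the axis correlations $\gamma(N,(d,0))$, resp.\ the marginal coefficients $\widehat\mu(d,0)$, are unknown and need not be positive, so passing from absolute to signed coefficient sums is exactly where all the difficulty is concentrated, and examining how the paper's estimate handles this would be the natural next step for you.
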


\begin{proof}

Let us consider, as a typical example, the case $k=\ell=2$. We consider
two vdC sets $D$ and $E$ in $\Z^{2}$. Let
$(u_{n,m})_{n,m\in\Z^2}$ be a family of complex numbers of modulus one
indexed by $\Z^4$, and satisfying : for all $d\in D$ and all $e\in E$,
\begin{equation}\label{H}
\lim_{\begin{subarray}{1}N_{1},N_{2}\to+\infty\\M_{1},M_{2}\to+\infty
\end{subarray}}\frac1{N_{1}N_{2}M_{1}M_{2}}
\sum_{\begin{subarray}{1}0\leq n<(N_{1},N_{2})\\0\leq m<(M_{1},M_{2})
\end{subarray}}u_{n+d,m+e}\cdot\overline{u_{n,m}}=0\;.
\end{equation}
It is not hard to verify that (\ref{H}) is still true when $d\in(-D)$ or $e\in(-E)$.

Let us fix $\epsilon>0$. By Remark \ref{pos-def-crit}, there exist two
positive-definite families $(a_{d})$ and $(b_{e})$ indexed by $\Z^2$
such that $a_{d}$ (resp. $b_{e}$) is zero whenever $d$ (resp. $e$)
is outside a finite subset of $D\cup(-D)\cup\{0\}$ (resp. $E\cup(-E)\cup\{0\}$),
with $a_{0}<\epsilon$, $b_{0}<\epsilon$ and
$\sum_{d}a_{d}=\sum_{e}b_{e}=1$.

It is clear from Proposition \ref{pos-def} (or from the Bochner-Herglotz
Theorem) that the family $(a_{d}b_{e})_{(d,e)\in\Z^{4}}$ is positive
definite. Let us denote $P:=N_{1}N_{2}M_{1}M_{2}$ and
$p:=\min\{N_{1},N_{2},M_{1},M_{2}\}$.
The generalized vdC inequality (Theorem~\ref{gen.vdC.ineq})
applied to $\Z^{4}$ gives
$$
\left|\sum_{\begin{subarray}{1}0\leq n<(N_{1},N_{2})\\0
\leq m<(M_{1},M_{2})\end{subarray}}u_{n,m}\right|^{2}\leq
P\sum_{d,e}a_{d}b_{e}
\sum_{\begin{subarray}{1}0\leq n,n+d<(N_{1},N_{2})\\0\leq m,m+e<(M_{1},M_{2})
\end{subarray}}u_{n+d,m+e}\cdot\overline{u_{n,m}} + P^{2}
O\left(1/p\right)\;.
$$
Dividing by $P^{2}$, letting $p$ go to infinity and using (\ref{H}),
we obtain
$$
\limsup_{\begin{subarray}{1}N_{1},N_{2}\to+\infty\\M_{1},M_{2}\to+\infty
\end{subarray}}\left|\frac1{N_{1}N_{2}M_{1}M_{2}}
\sum_{\begin{subarray}{1}0\leq n<(N_{1},N_{2})\\0
\leq
m<(M_{1},M_{2})\end{subarray}}u_{n,m}\right|^{2}\leq
\sum_{d\text{ or }e=0}a_{d}b_{e}\;.
$$
Since $\displaystyle
\sum_{d\text{ or }e=0}
a_{d}b_{e}=a_{0}\sum_{e}b_{e}+b_{0}\sum_{d}a_{d}-a_{0}b_{0}\leq2\epsilon$,
we conclude that the last limsup is zero.
\end{proof}

\subsubsection{Sequences in Hilbert space}
The goal of this short subsection is to point out that generalized van der Corput 
inequalities can be extended from numerical sequence to sequences of vectors in a 
Hilbert space. One of the reasons to be interested in such extensions is that they 
provide useful convergence criteria for multiple ergodic averages (see for example 
the references mentioned at the end of the introduction).

Let $\mathcal H$ be a Hilbert space and $(u_{n})_{n\in\N^{2}}$ be a
 doubly indexed family of vectors in this space. We will denote, for
any $h\in\Z^{2}$,
$$
\gamma(N,h):=\sum_{\begin{subarray}{1}0<n\leq N\\0<n+h\leq N\end{subarray}}
<u_{n+h},u_{n}>\;,
$$
and
$$
\|u\|_{\infty}:=\sup_{n}\|u_{n}\|\;.
$$
\begin{proposition} \label{gen.vdC.ineq.hilb}
        Let $H\in\N^{2}$ and $(a_{h})_{-H<h<H}$ be a finite
positive-definite family of complex numbers, with $\sum_{h}a_{h}=1$.
     We have
$$
\Big\|\sum_{0<n\leq N}u_{n}\Big\|^{2}\leq N_{1}N_{2} \Big(
\sum_{h}a_{h}\gamma(N,h)
+5\|u\|_{\infty}^{2}\sum_{h}(|h_{1}|N_{2}+|h_{2}|N_{1}
+|h_{1}h_{2}|)|a_{h}|\Big).
$$
\end{proposition}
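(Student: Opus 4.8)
The plan is to transcribe the proof of Theorem~\ref{gen.vdC.ineq} word for word, replacing the product $u_{n+h}\overline{u_n}$ by the inner product $\langle u_{n+h},u_n\rangle$ and each modulus by the corresponding norm in $\H$. Thus I would put $m:=\frac1{N_1N_2}\sum_{0<n\le N}u_n\in\H$ and $v_n:=u_n-m$, extend $v_n$ by $0$ for $n$ outside $[1,N_1]\times[1,N_2]$, and expand $\gamma(N,h)$ using bilinearity of $\langle\cdot,\cdot\rangle$ as $\gamma(N,h)=A_h+B_h+C_h+D_h$, where $A_h=\sum\langle v_{n+h},v_n\rangle$, $B_h=\sum\langle m,v_n\rangle$, $C_h=\sum\langle v_{n+h},m\rangle$ and $D_h=(N_1-|h_1|)(N_2-|h_2|)\|m\|^2$, all sums taken over $0<n\le N$ with $0<n+h\le N$.

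The only step that is not a purely formal transcription is the positivity
$$
\sum_h a_h A_h\ge0,
$$
which in the scalar case was exactly the definition of a positive-definite family. To recover it for vectors I would first re-index $n'=n+h$ to write $\sum_h a_hA_h=\sum_{n,n'}a_{n'-n}\langle v_{n'},v_n\rangle$, where the outer sums are finite since only the $v_n$ inside the box are nonzero. Choosing an orthonormal basis $(e_k)$ of the (finite-dimensional) span of these vectors and writing $v_n=\sum_k c_{n,k}e_k$, this becomes $\sum_k\big(\sum_{n,n'}a_{n'-n}c_{n',k}\overline{c_{n,k}}\big)$, and each inner sum is nonnegative by the scalar positive-definiteness of $(a_h)$ applied to the coefficients $z_{n'}=c_{n',k}$. (Equivalently, one may invoke Proposition~\ref{pos-def}: writing $a_h$ as a Fourier coefficient of the nonnegative trigonometric polynomial $T=\sum_h a_h e(h\cdot x)$ turns $\sum_h a_hA_h$ into $\int_{\T^2}T(x)\,\|\sum_n e(-n\cdot x)\,v_n\|^2\,\mathrm dx\ge0$.)

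Everything else goes through verbatim, since the bounds on the error terms use only the Cauchy--Schwarz and triangle inequalities, which hold in any Hilbert space. Using $\sum_{0<n\le N}v_n=0$ and the fact that the number of lattice points $n$ with $0<n\le N$ but not $0<n+h\le N$ is at most $|h_1|N_2+|h_2|N_1$, together with $\|m\|\le\|u\|_\infty$ and $\|v\|_\infty\le2\|u\|_\infty$, I would bound $|B_h|$ and $|C_h|$ by $2(|h_1|N_2+|h_2|N_1)\|u\|_\infty^2$ and estimate $D_h$ from below by $\frac1{N_1N_2}\|\sum_{0<n\le N}u_n\|^2-(|h_1|N_2+|h_2|N_1+|h_1h_2|)\|u\|_\infty^2$. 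Assembling $\sum_h a_h\gamma(N,h)\ge\sum_h a_hA_h+\sum_h a_hD_h-\sum_h|a_h|(|B_h|+|C_h|)$ and inserting these bounds gives the stated inequality. The main (indeed the only) obstacle is the positivity step above; it is the one place where the scalar proof implicitly uses commutativity, and the orthonormal-basis reduction---or equivalently the integral representation from Proposition~\ref{pos-def}---is precisely what carries it over to $\H$.
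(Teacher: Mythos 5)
Your proof is correct and is precisely the argument the paper intends: the paper omits the proof of Proposition \ref{gen.vdC.ineq.hilb}, saying only that it is ``similar to the scalar case,'' and your line-by-line transcription of the proof of Theorem \ref{gen.vdC.ineq} (with $u_{n+h}\overline{u_n}$ replaced by $\langle u_{n+h},u_n\rangle$ and moduli by norms) is exactly that. Moreover, you correctly isolate and fully justify the one step that is not a formal transcription---the positivity $\sum_h a_h A_h\geq 0$ for vector-valued $v_n$---both via the orthonormal-basis reduction to scalar positive-definiteness and via the integral representation $\int_{\T^2}T(x)\,\bigl\|\sum_n e(-n\cdot x)v_n\bigr\|^2\,\mathrm{d}x\geq 0$ coming from Proposition \ref{pos-def}.
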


The proof of Proposition \ref{gen.vdC.ineq.hilb} is similar to the scalar case and will 
be omitted. Combined with Remark \ref{pos-def-crit}, this proposition leads to the following extension of the notion of vdC set to families in Hilbert space.
\begin{corollary}\label{cor.gen.vdC.ineq.hilb}
     Let $D$ be a vdC set in $\Z^{2}$ and $(u_{n})_{n\in\Z^{2}}$ be a
     bounded family in $\mathcal H$. If$$
\forall d\in D,\quad\lim_{N_{1},N_{2}\to+\infty}
\frac1{N_{1}N_{2}}\sum_{0< n\leq (N_{1},N_{2})}<u_{n+d},u_{n}>=0
$$
then
$$
\lim_{N_{1},N_{2}\to+\infty} \frac1{N_{1}N_{2}}\sum_{0<n\leq (N_{1},N_{2})}u_{n}=0\;.
$$
\end{corollary}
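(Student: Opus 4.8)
The plan is to derive this statement from the generalized van der Corput inequality in Hilbert space (Proposition~\ref{gen.vdC.ineq.hilb}) in exactly the way Corollary~\ref{quant} was derived from its scalar analogue, feeding in the positive-definite family furnished by the Kamae--Mend\`es France criterion in the form of Remark~\ref{pos-def-crit}. Fix $\epsilon>0$. Since $D$ is a vdC set, Remark~\ref{pos-def-crit} provides a positive-definite family $(a_{h})_{-H<h<H}$, all but finitely many of whose members vanish, such that $\sum_{h}a_{h}=1$, $a_{0}\leq\epsilon$, and $a_{h}=0$ whenever $h\neq0$ and $h\notin D\cup(-D)$.

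Before applying the inequality I would observe that the hypothesis, which is stated only for $d\in D$, automatically covers $-D$ as well. Indeed, substituting $m=n+h$ in the defining sum shows that $\gamma(N,-h)=\overline{\gamma(N,h)}$ for every $h\in\Z^{2}$, so the convergence $\frac1{N_{1}N_{2}}\gamma(N,d)\to0$ for $d\in D$ entails the same for $-d$. Consequently $\frac1{N_{1}N_{2}}\gamma(N,h)\to0$ for every $h\neq0$ with $a_{h}\neq0$, since any such $h$ lies in $D\cup(-D)$.

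Next I would apply Proposition~\ref{gen.vdC.ineq.hilb} to this family and divide both sides by $(N_{1}N_{2})^{2}$, obtaining
$$\left\|\frac1{N_{1}N_{2}}\sum_{0<n\leq N}u_{n}\right\|^{2}\leq \sum_{h}a_{h}\,\frac{\gamma(N,h)}{N_{1}N_{2}}+5\|u\|_{\infty}^{2}\sum_{h}\left(\frac{|h_{1}|}{N_{1}}+\frac{|h_{2}|}{N_{2}}+\frac{|h_{1}h_{2}|}{N_{1}N_{2}}\right)|a_{h}|.$$
The error sum is finite and each of its terms tends to $0$ as $N_{1},N_{2}\to+\infty$, so it disappears in the limit. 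In the first sum the contribution of every $h\neq0$ tends to $0$ by the previous paragraph, while the term $h=0$ is bounded by $a_{0}\|u\|_{\infty}^{2}\leq\epsilon\|u\|_{\infty}^{2}$, because $\frac1{N_{1}N_{2}}\gamma(N,0)=\frac1{N_{1}N_{2}}\sum_{0<n\leq N}\|u_{n}\|^{2}\leq\|u\|_{\infty}^{2}$. Taking $\limsup_{N_{1},N_{2}\to+\infty}$ therefore yields a bound of $\epsilon\|u\|_{\infty}^{2}$ on the limit superior of the left-hand side, and letting $\epsilon\to0$ gives the claimed convergence to zero.

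I do not expect a serious obstacle here: the argument is a transcription of the scalar one, with $|\cdot|$ replaced by $\|\cdot\|$. The one point demanding a moment's care is the symmetry $\gamma(N,-h)=\overline{\gamma(N,h)}$, which is precisely what allows the one-sided hypothesis over $D$ to control the full symmetric support $D\cup(-D)$ of the positive-definite family; once that is in hand, everything reduces to the routine limiting estimate already performed in the proof of Corollary~\ref{quant}.
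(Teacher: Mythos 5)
Your proof is correct and follows exactly the route the paper intends: the paper derives Corollary~\ref{cor.gen.vdC.ineq.hilb} by combining Proposition~\ref{gen.vdC.ineq.hilb} with Remark~\ref{pos-def-crit}, leaving the limiting argument (which mirrors Corollary~\ref{quant}) implicit. Your explicit treatment of the symmetry $\gamma(N,-h)=\overline{\gamma(N,h)}$, which lets the hypothesis over $D$ control the symmetric support $D\cup(-D)$, is precisely the routine verification the paper omits.
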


\subsection{A new spectral characterization}\label{new-spectral}
We work in this subsection with ordinary sequences indexed by $\Z$. The extension to the
multidimensional case is straightforward. We have the following spectral characterization of vdC sets, which completes the classical Theorem \ref{spectralchar}.

\begin{theorem}\label{n-spectralchar}
Let $D\subset \Z$. Then        $D$ is a van der Corput set if and only if
        any positive measure $\sigma$ on the torus $\T$ such
        that
$
\sum_{d\in D} \left|\widehat\sigma(d)\right|<+\infty
$  is continuous.
\end{theorem}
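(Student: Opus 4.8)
The statement splits into an easy and a hard direction, and the plan is to reduce each to tools already available in the excerpt: the spectral characterization (Theorem~\ref{spectralchar}, condition (S3)) and the Kamae--Mend\`es France criterion (Proposition~\ref{K-MF1}), both in their one-dimensional form.

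For the easy direction I would argue that the displayed continuity condition implies (S3). Indeed, if $\sigma$ is a positive measure with $\widehat\sigma(d)=0$ for every $d\in D$, then trivially $\sum_{d\in D}|\widehat\sigma(d)|=0<+\infty$, so the hypothesis forces $\sigma$ to be continuous; by Theorem~\ref{spectralchar} this is exactly the vdC property. So the whole content lies in the converse.

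For the hard direction, assume $D$ is a vdC set and let $\sigma$ be a positive measure with $\sum_{d\in D}|\widehat\sigma(d)|<+\infty$; the goal is to show $\sigma$ is continuous. The obstacle is that summability of $|\widehat\sigma|$ on $D$ is far weaker than the vanishing hypothesis in (S2)/(S3), so $\sigma$ cannot be fed directly into the spectral characterization. The plan is to pass to the auxiliary positive measure $\mu:=\sigma\star\sigma^{-}$, where $\sigma^{-}$ is the image of $\sigma$ under $x\mapsto -x$. This $\mu$ has three convenient features: its Fourier coefficients $\widehat\mu(d)=|\widehat\sigma(d)|^{2}$ are nonnegative; its point mass at $0$ equals $\sum_{a}\sigma(\{a\})^{2}$, the sum over the atoms of $\sigma$, so that $\sigma$ is continuous if and only if $\mu(\{0\})=0$; and $\sum_{d\in D}\widehat\mu(d)=\sum_{d\in D}|\widehat\sigma(d)|^{2}\le\sigma(\T)\sum_{d\in D}|\widehat\sigma(d)|<+\infty$.

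Now I would bring in the vdC property through Proposition~\ref{K-MF1}: for each $k$ there is a real trigonometric polynomial $P_k$ with spectrum in $\pm D$, with $P_k(0)=1$ and $P_k\ge -1/k$. The crucial observation is that, since the spectrum avoids $0$, $P_k$ has zero mean, and combined with $P_k\ge -1/k$ this forces $\|P_k\|_{1}\le 2/k$, so every Fourier coefficient of $P_k$ is bounded by $2/k$ in modulus. Integrating $P_k$ against $\mu$ then produces two competing estimates: on one hand $\int_{\T}P_k\,d\mu\ge \mu(\{0\})-\frac1k\mu(\T)$, because $P_k(0)=1$ and $P_k\ge -1/k$ off the atom; on the other hand $\int_{\T}P_k\,d\mu=\sum_{d\in\pm D}\widehat{P_k}(d)\widehat\mu(d)$, whence $\bigl|\int_{\T}P_k\,d\mu\bigr|\le \frac2k\sum_{d\in\pm D}\widehat\mu(d)\le \frac4k\sum_{d\in D}\widehat\mu(d)\to 0$. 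Letting $k\to\infty$ yields $\mu(\{0\})=0$, i.e. $\sigma$ has no atoms. The two points needing careful verification are the identity $\mu(\{0\})=\sum_{a}\sigma(\{a\})^{2}$ (the continuous part of $\sigma$ contributes nothing to the point masses of the convolution) and the uniform $L^{1}$-bound on $P_k$ coming from its zero mean; I expect the latter to be the real crux, since it is what converts mere summability of $\widehat\sigma$ on $D$ into a usable estimate, while the rest is bookkeeping.
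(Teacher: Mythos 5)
Your proof is correct, and it takes a genuinely different route from the paper's. The paper never pairs trigonometric polynomials against $\sigma$ directly: it uses the positive-definite form of the Kamae--Mend\`es France criterion (Remark \ref{pos-def-crit}) together with the quantitative generalized vdC inequality (the one-dimensional case of Theorem \ref{gen.vdC.ineq}), and then, following Ruzsa, constructs a random sequence $(Y_n)$ of modulus-one numbers (via the law-of-large-numbers Lemmas \ref{lemmeprob.1.1} and \ref{lemmeprob.2.1}) whose Ces\`aro means converge a.s.\ to $\sigma(\{0\})$ and whose correlations converge to $\widehat\sigma(h)$; feeding $(Y_n)$ into that inequality gives $|\sigma(\{0\})|^2\le\epsilon\left(1+2\sum_{d\in D}|\widehat\sigma(d)|\right)$, whence $\sigma(\{0\})=0$, and continuity follows by translating $\sigma$ (translation preserves $|\widehat\sigma|$, hence the summability hypothesis). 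You instead stay entirely on the measure-theoretic side: the symmetrization $\mu=\sigma\star\sigma^-$, with $\widehat\mu=|\widehat\sigma|^2\ge0$ and $\mu(\{0\})=\sum_a\sigma(\{a\})^2$, reduces continuity of $\sigma$ to killing a single point mass, and the $L^1$ bound $\|P_k\|_1\le 2/k$ (zero mean plus $P_k\ge-1/k$) turns summability of $\widehat\mu$ over $\pm D$ into the vanishing of $\int P_k\,\text{d}\mu$. Both of the steps you flag for verification are sound: the atom identity is the standard Wiener-type computation $\mu(\{0\})=\int\sigma(\{y\})\,\text{d}\sigma(y)$, and the $L^1$ bound follows since the negative part of $P_k$ is $\le 1/k$ pointwise and has the same integral as the positive part. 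What each approach buys: yours is more elementary and self-contained given Proposition \ref{K-MF1} --- no probabilistic construction, no generalized vdC inequality, and continuity comes out in one shot rather than atom-by-atom --- while the paper's route produces an explicit quantitative bound on $\sigma(\{0\})$ in terms of the summability constant and reuses random-sequence machinery it needs anyway for Theorems \ref{spectralchar} and \ref{enhanced-sp.char}. Note finally that both proofs consume the same essential input (Proposition \ref{K-MF1}, equivalently Remark \ref{pos-def-crit}), whose own proof rests on Theorem \ref{spectralchar}; so Ruzsa's probabilistic argument is still present in the background of your proof, merely hidden inside the criterion.
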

This result is not surprising. Why?  Because we have a ``parallel" fact pertaining to recurrence properties. It is not difficult to prove that if a set 
$D$ is a set of recurrence, then, for any m.p.s. $(X,\A,\mu,T)$ and any set $A$ in $\A$ 
such that $\mu(A)>0$, not only does there exist $d\in D$ such that 
$\mu(A\cap T^{d}A)>0$, but also $\sum_{d\in D}\mu(A\cap 
T^{d}A)=+\infty$.

\begin{proof}[Proof of Theorem \ref{n-spectralchar}]
Let $D$ be a vdC set in $\Z$, and fix $\epsilon>0$.
By Remark~\ref{pos-def-crit},
we know that there 
exists a positive-definite sequence $(a_{h})_{h\in\Z}$ such that :
\begin{itemize}\item
all but finitely many $a_{h}$ are zero;
\item
$a_{h}=0$ whenever $h\neq0$ and $h\notin D\cup(-D)$;
\item
$a_{0}\leq\epsilon$ and $\sum_{d}a_{d}=1$.
\end{itemize}
Moreover, for any positive-definite sequence $(b_{h})_{h\in\Z}$ 
with support in $\{-H+1,\ldots,H-1\}$ and such that $\sum_{h}b_{h}=1$, 
we have the following vdC 
inequality (simply the one-dimensional version of Theorem \ref{gen.vdC.ineq}): 
for any complex numbers $u_{1},u_{2},\ldots,u_{N}$,
$$
 \left|\sum_{n=1}^Nu_{n}\right|^2\leq 
 N\left(\sum_h b_h\gamma(N,h) +5\|u\|_{\infty}^2\sum_h|hb_h|\right).$$
We apply this inequality to the sequence $(a_h)$ after noticing that 
since the sequence is positive-definite, we 
have $|a_{h}|\leq a_{0}$. We obtain
$$
    \left|\sum_{n=1}^Nu_{n}\right|^2\leq 
 Na_0\left(\gamma(N,0)+\sum_{\begin{subarray}{1}{d\in D\cup(-D)}\\|d|\leq 
H\end{subarray}}\left|\gamma(N,d)\right|+5\|u\|_{\infty}^2H^2\right),
 $$
Hence
\begin{equation}\label{ineq}
 \left|\frac1N\sum_{n=1}^Nu_{n}\right|^2\leq 
 \epsilon\left(\frac1N\sum_{n=1}^N |u_n|^2+\sum_{\begin{subarray}{1}{d\in D\cup(-D)}\\|d|\leq H\end{subarray}}\left|\frac1N\gamma(N,d)\right|+\frac5{N}\|u\|_{\infty}^2H^2\right)
 \end{equation}
 
Let $\sigma$ be a probability measure on the torus such that
$
\sum_{d\in D} \left|\widehat\sigma(d)\right|<+\infty
$.

Following Ruzsa (\cite{R}), we consider a sequence $(Y_{n})_{n\in\N}$ of complex random 
variables of modulus one such that almost surely,
$$
\frac1N\sum_{0< n\leq N}Y_{n}\to\sigma(\{0\})\quad\text{and}\quad
\frac1N\sum_{0< n\leq N}Y_{n+h}\overline{Y_{n}}\to\widehat{\sigma}(h)\;.
$$
(Details of a construction of such a sequence $(Y_n)$ are given below, 
in Lemmas \ref{lemmeprob.1.1} and \ref{lemmeprob.2.1} and in the text which follows 
these lemmas.)\\

We apply (\ref{ineq}) to $u_n=Y_n$ and 
let $N$ go to infinity. After noticing that $$\frac1N\gamma(N,d)=\frac1N\sum_{\begin{subarray}{1}0<n\leq N\\0<n+d\leq N\end{subarray}}Y_{n+d}\overline{Y_{n}}\to\widehat{\sigma}(d)\;,$$ we obtain
$$
|\sigma(\{0\})|^2\leq\epsilon\left(1+2\sum_{d\in 
D}\left|\widehat\sigma(d)\right|\right)\;.
$$
This proves that $\sigma(\{0\})=0$. 
\end{proof}

\section{Enhanced van der Corput sets}\label{enhancedvdcset}
\subsection{Introduction}
In this section, we introduce a new property which we call \emph{enhanced vdC}. It is a
natural concept for several reasons :\begin{itemize}\item
the set of all integers is enhanced vdC, and it is often this property
which is classicaly used in equidistribution theory and ergodic
theory;\item
the spectral characterization of enhanced vdC sets is given by the
FC$^{+}$ property (Theorem \ref{enhanced-sp.char});\item
in the manner that the notion of vdC set is linked to the notion of set of recurrence, the notion of enhanced vdC set is linked to the notion of
set of strong recurrence (see Subsection \ref{enhvdc-sr}). \end{itemize}

We give here the definition and the spectral characterization of
 enhanced vdC sets in $\Z$, extension to $\Z^d$ being completely routine.

\subsection{Definitions and a spectral characterization}\label{enhanced-def-sec}
\label{spectral-char2}
\begin{definition}\label{enhanced-def}
An infinite set of integers $D$ is enhanced van der
Corput if, for any sequence $(u_{n})_{n\in\Z}$ of complex numbers
of modulus 1 such that
\begin{equation}\label{enh-w-lim}
\forall d\in D,\quad\gamma(d):=\lim_{N\to+\infty}
\frac1{N}\sum_{n=0}^{N-1}u_{n+d}\overline{u_{n}}\qquad\text{exists}
\end{equation}and$$
\lim_{|d|\to+\infty, d\in D}\gamma(d)=0\;,
$$
we have
$$
\lim_{N\to+\infty} \frac1{N}\sum_{n=0}^{N-1}u_{n}=0\;.
$$
\end{definition}
(Note that we obtain an equivalent definition if we replace lim by limsup 
in (\ref{enh-w-lim}). See Proposition \ref{def2dvdc}.)
\begin{definition}\label{fcplus-def}
An infinite set of integers $D$ is FC$^{+}$ if every positive
measure $\sigma$ on the torus $\T$ having the property that
$\displaystyle
\ \lim_{|d|\to+\infty, d\in D}\widehat\sigma(d)=0\
$ is continuous.
\end{definition}
This definition appears in \cite{K-MF} and in \cite{JB}. We
remark that in \cite{Peres}, Peres uses the notation FC$^+$ for sets satisfying the apparently weaker Condition (S3) of Theorem \ref{spectralchar}. We ask in Question \ref{first} whether Condition (S3) is actually strictly weaker than Condition FC$^+$.

\begin{theorem}\label{enhanced-sp.char}
The notions of enhanced vdC set and FC$^{+}$ set coincide.
\end{theorem}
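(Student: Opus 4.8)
The plan is to prove the two implications separately, following closely the proof of the classical spectral characterization (Theorem \ref{spectralchar}), with one essential new ingredient: the defining hypothesis of an FC$^+$ set, namely $\lim_{|d|\to+\infty,\,d\in D}\widehat\sigma(d)=0$, is invariant under translation of the measure $\sigma$, since translation multiplies each Fourier coefficient by a unimodular factor and hence leaves $|\widehat\sigma(d)|$ unchanged (and $z\to0$ iff $|z|\to0$). Consequently, just as conditions (S2) and (S3) of Theorem \ref{spectralchar} are equivalent, once one shows that the enhanced vdC property forces $\sigma(\{0\})=0$, full continuity of $\sigma$ will follow automatically.

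First I would prove that FC$^+$ implies enhanced vdC. Let $(u_{n})_{n\in\Z}$ be of modulus $1$, with $\gamma(d)$ existing for every $d\in D$ and $\gamma(d)\to0$ as $|d|\to+\infty$ in $D$. Choose an increasing sequence $(N_{j})$ along which $\frac1{N_{j}}\bigl|\sum_{0\le n<N_{j}}u_{n}\bigr|$ tends to $\limsup_{N}\frac1N\bigl|\sum u_{n}\bigr|$ and along which $\gamma^{*}(h):=\lim_{j}\frac1{N_{j}}\sum u_{n+h}\overline{u_{n}}$ exists for every $h\in\Z$. By the one-dimensional version of Lemma \ref{class_spect}, $\gamma^{*}$ is the Fourier transform of a positive measure $\sigma$ on $\T$ satisfying $\limsup_{N}\frac1N|\sum u_{n}|\le\sqrt{\sigma(\{0\})}$. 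Since the full limits $\gamma(d)$ exist for $d\in D$, we have $\widehat\sigma(d)=\gamma^{*}(d)=\gamma(d)\to0$; the FC$^+$ hypothesis then gives that $\sigma$ is continuous, so $\sigma(\{0\})=0$, whence the Ces\`aro means of $(u_{n})$ tend to $0$.

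For the converse, enhanced vdC $\Rightarrow$ FC$^+$, I would argue probabilistically in the spirit of Ruzsa, as in the (S1)$\Rightarrow$(S2) part of Theorem \ref{spectralchar}. Given a positive measure $\sigma$ (normalized to a probability) with $\widehat\sigma(d)\to0$ along $D$, and a point $t\in\T$, I apply the construction of Lemmas \ref{lemmeprob.1.1} and \ref{lemmeprob.2.1} to the translated measure $\sigma_{t}$ (the pushforward of $\sigma$ under $x\mapsto x-t$), obtaining a modulus-$1$ random sequence $(Y_{n})$ for which, almost surely, $\frac1N\sum_{0\le n<N}Y_{n}\to\sigma_{t}(\{0\})=\sigma(\{t\})$ and $\frac1N\sum Y_{n+h}\overline{Y_{n}}\to\widehat{\sigma_{t}}(h)=e(-ht)\widehat\sigma(h)$ for all $h$. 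For $d\in D$ the correlation limit has modulus $|\widehat\sigma(d)|$, which tends to $0$; hence the enhanced vdC property applied to $(Y_{n})$ forces $\frac1N\sum Y_{n}\to0$, that is, $\sigma(\{t\})=0$. As $t$ is arbitrary, $\sigma$ is continuous.

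The main obstacle --- really the only point that goes beyond a transcription of the proof of Theorem \ref{spectralchar} --- is that FC$^+$ demands full continuity of $\sigma$, not merely the absence of an atom at $0$. This is exactly where the translation-invariance noted above is used: modulating the random sequence (equivalently, translating $\sigma$) reduces the vanishing of each atom $\sigma(\{t\})$ to the case $t=0$, and the enhanced vdC hypothesis still applies because the relevant correlations tend to $0$ in modulus. One should also verify the harmless but necessary point that along the chosen subsequence the limit $\gamma^{*}(d)$ agrees with the prescribed full limit $\gamma(d)$ for $d\in D$, which is immediate since a convergent sequence and each of its subsequences share the same limit.
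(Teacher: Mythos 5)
Your proof is correct and takes essentially the same route as the paper's: the FC$^{+}\Rightarrow$ enhanced vdC direction via a subsequence extraction and the one-parameter affinity lemma (Lemma \ref{affinite}), and the converse via Ruzsa's probabilistic construction (Lemmas \ref{lemmeprob.1.1} and \ref{lemmeprob.2.1}) combined with translation of $\sigma$. Your explicit use of the translated measure $\sigma_t$ is exactly the paper's closing remark that ``the same argument can be applied to all the images of $\sigma$ by translations of the torus,'' so there is nothing genuinely different here.
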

\begin{proof} 
      The proof of this proposition follows the lines of the spectral
      characterization of vdC sets. In order to prove that FC$^{+}$ sets
      are  enhanced vdC, we use the following lemma, which is the one parameter version of Lemma~\ref{class_spect}.

      \begin{lemma}\label{affinite}
Let $(u_{n})_{n\in\N}$ be a bounded sequence of complex numbers
and $(N_j)_{j\in\N}$ be an increasing sequence of positive integers.
If for all $h\in\N$
$$
\gamma(h):=\lim_{j\to+\infty}\frac1{N_j}
\sum_{n=1}^{N_{j}}u_{n+h}\overline{u_{n}}\qquad\mbox{exists}\;,
$$
then there exists a positive measure $\sigma$ on the torus
such that, for all $h\in\N$,
$$
\widehat\sigma(h)=\gamma(h)
$$
and this measure satisfies
$$
\limsup_{j\to+\infty}\frac1{N_j}\left|\sum_{n=1}^{N_{j}}u_{n}\right|
\leq\sqrt{\sigma\left(\{0\}\right)}\;.
$$
\end{lemma}

Let $D$ be an FC$^{+}$ set. Let $(u_{n})$
be a bounded sequence of complex numbers such that
$$
\lim_{|d|\to+\infty, d\in D}\lim_{N\to+\infty}
\frac1{N}\sum_{n=1}^{N}u_{n+d}\overline{u_{n}}=0\;.
$$
There exists an increasing sequence $(N_j)_{j\in\N}$ of positive
integers such that
\begin{itemize}
         \item
         $\displaystyle\lim_{j\to+\infty}\frac1{N_{j}}\left|\sum_{n=1}^{N_{j}}
         u_{n}\right|=
\limsup_{N\to+\infty} \frac1{N}\left|\sum_{n=1}^{N}u_{n}\right|$\;,
\item
$\displaystyle\forall h\in \N,\quad \gamma(h):=
\lim_{j\to+\infty}\frac1{N_{j}}\sum_{n=1}^{N_{j}}
u_{n+h}\overline{u_{n}}\qquad\mbox{exists}$\;.
\end{itemize}
The map $\gamma$ is the Fourier transform of a positive measure
$\sigma$ on the torus. We have
$\lim_{|d|\to+\infty, d\in D}\widehat\sigma(d)=0$. By hypothesis, this
forces the measure $\sigma$ to be continuous. We have
$\sigma\left(\{0\}\right)=0$ and, using the above lemma, we obtain
the Ces\`aro convergence of $(u_{n})$ to zero. The set $D$ is  enhanced vdC.

In order to prove that any  enhanced vdC set is FC$^{+}$, the arguments of
Ruzsa (\cite{R}) can be adapted
and we use the following probabilistic lemmas.
\begin{lemma}\label{lemmeprob.1.1}
         Let $(\theta_{n})_{n\in\N}$ be an i.i.d. sequence of random
         variables with values in the torus $\T$. We define a new sequence
         of complex random variables
         $(Y_{n})$ by
         $$
         Y_{n}:=
         e\left(r\theta_m\right)\;,
         $$
         if $n=m^{2}+r$, with $0\leq r\leq2m$.

         We have, almost surely,
         $$
         \lim_{N\to+\infty}\frac1{N}\sum_{n=1}^{N}Y_{n} =
         \P\left(\theta=0\right)\;.
         $$
         \end{lemma}

\begin{lemma}\label{lemmeprob.2.1}
Let $(X_{n})_{n\in\N}$ be an i.i.d. sequence of bounded complex
random variables. We define a new sequence of complex random variables
$(Z_n)$ by
$$
         Z_n:=X_{m}
         $$
         if $n=m^{2}+r$, with $0\leq r\leq2m$.

         We have, almost surely,
          $$
         \lim_{N\to+\infty}\frac1{N}\sum_{n=1}^{N}Z_{n} =\E\left[X\right]\;.
         $$
         \end{lemma}

Let $D$ be an enhanced vdC set, and let $\sigma$ be a positive measure on
$\T$. We suppose that the Fourier coefficient $\widehat\sigma(d)$
goes to zero when $d$ goes to infinity in $D$.
Without loss of generality, we can suppose that $\sigma$ is a
probability measure, and we consider a sequence of independent random
variables
$(\theta_{n})$ of law $\sigma$. We
define, as in Lemma~\ref{lemmeprob.1.1}, the family of complex random
variables $(Y_{n})$. Let us fix $h\in\N$. We define $Z_n=e(h\theta_m)$ for $n=m^2+r $ and $0\leq r\leq2m$. By Lemma \ref{lemmeprob.2.1} we know that, almost surely, $
         \lim_{N\to+\infty}\frac1{N}\sum_{n=1}^{N}Z_{n} =\E\left[e(h\theta)\right]
         $. Furthermore, the set of positive integers $n$ such that $Y_{n+h}\overline{Y_n}=Z_n$ has full density. Thus, almost surely,
$$
\lim_{N\to+\infty}\frac1{N}\sum_{n=1}^{N}
Y_{n+h}\overline{Y_{n}}=\E\left[e(h\theta)\right]\;.
$$

This last quantity is exactly $\widehat{\sigma}(h)$ and, by
hypothesis, it goes to zero when $h$ goes to infinity in $D$.
Since the set $D$ is  enhanced vdC, we conclude that
$$
\lim_{N\to+\infty}\frac1{N}\sum_{n=1}^{N}
Y_{n}=0\;.
$$
By Lemma \ref{lemmeprob.1.1}, this means that
$\P\left(\theta=0\right)=0$, that is to say
$\sigma\left(\{0\}\right)=0$. The same argument can be applied to all
the images of $\sigma$ by translations of the torus, and we conclude
that $\sigma$ is a continuous measure. Hence $D$ is FC$^{+}$.
\end{proof}
The spectral characterization makes it possible to give an alternative
definition of  enhanced vdC sets.
\begin{proposition}\label{def2dvdc}
      An infinite set of integers $D$ is enhanced vdC if and only if for any sequence $(u_{n})_{n\in\Z}$ of complex numbers
of modulus 1 such that
$$
\lim_{|d|\to+\infty, d\in D}\,\limsup_{N\to+\infty}
\left|\frac1{N}\sum_{n=0}^{N-1}u_{n+d}\overline{u_{n}}\right|=0\;,
$$
one has
$$
\lim_{N\to+\infty} \frac1{N}\sum_{n=0}^{N-1}u_{n}=0\;.
$$
\end{proposition}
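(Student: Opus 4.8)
The plan is to prove the two implications separately, one of which is immediate while the other carries all the content. Throughout I abbreviate $S_N(d) := \frac1N\sum_{n=0}^{N-1}u_{n+d}\overline{u_n}$.

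First I would dispatch the easy direction: the stated (limsup) property implies the enhanced vdC property of Definition \ref{enhanced-def}. Indeed, suppose $D$ has the limsup property and let $(u_n)_{n\in\Z}$ be a modulus-one sequence for which all the limits $\gamma(d)=\lim_{N\to+\infty} S_N(d)$ exist for $d\in D$ and satisfy $\gamma(d)\to 0$ as $|d|\to+\infty$ in $D$. Since each limit exists, one has $\limsup_{N\to+\infty}|S_N(d)| = |\gamma(d)|$, so that $\lim_{|d|\to+\infty,\, d\in D}\limsup_{N\to+\infty}|S_N(d)| = 0$. Thus the hypothesis of the limsup property is met, and it gives $\frac1N\sum_{n=0}^{N-1}u_n\to 0$, which is exactly the conclusion required for $D$ to be enhanced vdC.

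For the converse — that enhanced vdC implies the limsup property — I would pass through the spectral characterization. Assume $D$ is enhanced vdC, hence FC$^+$ by Theorem \ref{enhanced-sp.char}, and let $(u_n)_{n\in\Z}$ be a modulus-one sequence satisfying $\lim_{|d|\to+\infty,\, d\in D}\limsup_{N\to+\infty}|S_N(d)| = 0$. Choose an increasing sequence $(N_j)$ along which $\frac1{N_j}\bigl|\sum_{n} u_n\bigr|$ converges to $\limsup_{N\to+\infty}\frac1N\bigl|\sum_n u_n\bigr|$, and, by a diagonal argument, pass to a subsequence along which $\gamma(h):=\lim_{j\to+\infty}\frac1{N_j}\sum_{n} u_{n+h}\overline{u_n}$ exists for every $h\in\Z$. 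By Lemma \ref{affinite} there is then a positive measure $\sigma$ on $\T$ with $\widehat\sigma=\gamma$ (a probability measure, since $\gamma(0)=1$) satisfying $\limsup_{N\to+\infty}\frac1N\bigl|\sum_n u_n\bigr| \leq \sqrt{\sigma(\{0\})}$.

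The crux is then to verify the FC$^+$ hypothesis for $\sigma$. For each $d\in D$ I would note that $|\widehat\sigma(d)| = |\gamma(d)| \leq \limsup_{N\to+\infty}|S_N(d)|$, since a subsequential limit is dominated by the full limsup of the moduli; combined with the hypothesis on $(u_n)$, this yields $\lim_{|d|\to+\infty,\, d\in D}\widehat\sigma(d) = 0$. As $D$ is FC$^+$, the measure $\sigma$ is continuous, so in particular $\sigma(\{0\})=0$, whence $\limsup_{N\to+\infty}\frac1N\bigl|\sum_n u_n\bigr| = 0$ and the Cesàro averages of $(u_n)$ converge to zero. The main obstacle is precisely this last passage: because the correlations $S_N(d)$ need not converge along the full sequence, one cannot apply Definition \ref{enhanced-def} directly, and it is the detour through the spectral (FC$^+$) characterization, together with the elementary inequality relating subsequential limits to the full limsup, that overcomes the gap between the two formulations.
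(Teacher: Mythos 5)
Your proof is correct and follows exactly the route the paper intends: the easy implication is immediate because the limsup hypothesis is weaker on $(u_n)$ than existence of vanishing limits, and the substantive implication is obtained by passing through the FC$^+$ characterization (Theorem \ref{enhanced-sp.char}) together with Lemma \ref{affinite}, using the inequality $|\gamma(d)|\leq\limsup_{N}|S_N(d)|$ for subsequential limits. This matches the paper, which states the proposition as a consequence of the spectral characterization and whose proof of the FC$^+\Rightarrow$ enhanced vdC direction is precisely this subsequence-plus-spectral-measure argument.
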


\subsection{Some properties of enhanced vdC sets}
From the spectral characterization we deduce various corollaries. We omit detailed proofs since they are similar to proofs of the corresponding statements for vdC sets (see Subsection \ref{ex.}).
\begin{corollary}[Ramsey property]
        If $D=D_{1}\cup D_{2}$ is an enhanced vdC set, then at least one
        of the sets $D_{1}$ or $D_{2}$ is  enhanced vdC. 
        \end{corollary}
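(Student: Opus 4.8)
The plan is to argue via the spectral characterization of Theorem~\ref{enhanced-sp.char}, which identifies enhanced vdC sets with FC$^{+}$ sets, and then to mimic the proof of the Ramsey property for ordinary vdC sets (Corollary~\ref{Ramsey}), replacing the algebraic statement ``$\widehat\sigma$ vanishes on $D_i$'' by its asymptotic counterpart ``$\widehat\sigma(d)\to 0$ as $|d|\to+\infty$ in $D_i$''. I would prove the contrapositive: assuming that neither $D_1$ nor $D_2$ is FC$^{+}$, I would produce a single positive measure witnessing that $D=D_1\cup D_2$ is not FC$^{+}$.

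Concretely, since $D_i$ is not FC$^{+}$, Definition~\ref{fcplus-def} provides for each $i$ a positive measure $\sigma_i$ on $\T$ which is \emph{not} continuous and which satisfies $\lim_{|d|\to+\infty,\,d\in D_i}\widehat{\sigma_i}(d)=0$. Each $\sigma_i$ then has an atom at some point $t_i$; since translating a measure multiplies its Fourier coefficients by a unimodular character, the modulus $|\widehat{\sigma_i}(d)|$ is unchanged, so after translating I may assume each atom sits at $0$, the limit condition being preserved. I would then form the convolution $\sigma:=\sigma_1\star\sigma_2$, a positive measure with $\widehat\sigma(d)=\widehat{\sigma_1}(d)\,\widehat{\sigma_2}(d)$ and with an atom at $0$ of mass $\sigma(\{0\})\geq\sigma_1(\{0\})\,\sigma_2(\{0\})>0$, exactly as in the proof of Corollary~\ref{Ramsey}. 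Thus $\sigma$ is not continuous.

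It remains to check that $\widehat\sigma(d)\to 0$ as $|d|\to+\infty$ in $D$, and this is the one point where the present argument genuinely differs from the plain vdC case. There one only needs that a product of two numbers vanishes as soon as one factor does; here I need the asymptotic version, which rests on the elementary but essential fact that the Fourier coefficients of a finite measure are bounded, $|\widehat{\sigma_i}(d)|\leq\sigma_i(\T)$. Given $\epsilon>0$, for $|d|$ large with $d\in D_1$ the factor $\widehat{\sigma_1}(d)$ is small while $\widehat{\sigma_2}(d)$ stays bounded, and symmetrically for $d\in D_2$; since every $d\in D$ with $|d|$ large lies in $D_1$ or in $D_2$, the product is small in either case. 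Hence $\widehat\sigma(d)\to 0$ in $D$, and $\sigma$ exhibits $D$ as not FC$^{+}$, i.e.\ not enhanced vdC, which by Theorem~\ref{enhanced-sp.char} contradicts the hypothesis. I expect this splitting-over-the-union step, together with the translation normalization of the atoms, to be the only place requiring care; everything else is a transcription of the argument for Corollary~\ref{Ramsey}.
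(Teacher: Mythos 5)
Your proof is correct and is essentially the argument the paper intends: the paper omits the proof of this corollary, stating only that it follows from the spectral characterization (Theorem~\ref{enhanced-sp.char}) in the same way that Corollary~\ref{Ramsey} follows from Theorem~\ref{spectralchar}, namely by convolving two witnessing measures with atoms at $0$. The details you supply --- translating the atoms to $0$ (harmless, since translation preserves the moduli $|\widehat{\sigma_i}(d)|$ and hence the decay condition) and using the bound $|\widehat{\sigma_i}(d)|\leq\sigma_i(\T)$ to pass the decay of $\widehat{\sigma_1}\widehat{\sigma_2}$ to the union $D_1\cup D_2$ --- are exactly what is needed to carry the vdC argument over to the FC$^{+}$ setting.
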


\begin{corollary}[Sets of differences]
        Let $D\subset\N$. Suppose that, for all $n>0$ there exist $a_1<a_2<\ldots<a_n$ such that $\{a_j-a_i\,:\,1\leq i<j\leq n\}\subset D$. Then $D$ is an enhanced vdC set.
        \end{corollary}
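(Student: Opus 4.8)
The plan is to run everything through the spectral characterization of Theorem~\ref{enhanced-sp.char}, which identifies enhanced vdC sets with FC$^+$ sets (Definition~\ref{fcplus-def}). Thus it suffices to prove that every positive measure $\sigma$ on $\T$ satisfying $\lim_{|d|\to+\infty,\,d\in D}\widehat\sigma(d)=0$ is continuous. After normalizing, I may assume $\sigma$ is a probability measure. Since translating $\sigma$ on $\T$ leaves $|\widehat\sigma(d)|$ unchanged, it preserves the hypothesis while moving any prescribed atom to the origin; hence it is enough to rule out an atom at $0$. This reduces the statement to an $L^{2}(\sigma)$ orthogonality estimate of exactly the same flavour as the proof of the corresponding vdC ``Sets of differences'' statement in Subsection~\ref{ex.}.

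Suppose, for contradiction, that $c:=\sigma(\{0\})>0$. Fix $\epsilon>0$ and, using the hypothesis, choose $M$ so large that $|\widehat\sigma(d)|<\epsilon$ for every $d\in D$ with $|d|\geq M$. Now fix $n\geq1$ and apply the assumption on $D$ \emph{to the integer $nM$}: there exist $b_{1}<b_{2}<\cdots<b_{nM}$ with all pairwise differences lying in $D$. From this long configuration I would extract the \emph{sparse} subset $a_{j}:=b_{jM}$, $1\leq j\leq n$. Its pairwise differences are still differences of elements of the $b$-configuration, hence still lie in $D\cup(-D)$, but now they are large: because the $b_{k}$ are strictly increasing integers, $|a_{j}-a_{i}|=b_{jM}-b_{iM}\geq (j-i)M\geq M$ whenever $i<j$, so $|\widehat\sigma(a_{j}-a_{i})|<\epsilon$ for all $i\neq j$.

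With this configuration the estimate is routine. On one side, since the integrand takes the value $n^{2}$ at the atom,
\begin{equation*}
n^{2}c\;\leq\;\int_{\T}\Big|\sum_{j=1}^{n}e(a_{j}x)\Big|^{2}\,\mathrm{d}\sigma(x).
\end{equation*}
On the other side, expanding the square and integrating term by term gives
\begin{equation*}
\int_{\T}\Big|\sum_{j=1}^{n}e(a_{j}x)\Big|^{2}\,\mathrm{d}\sigma(x)=\sum_{i,j=1}^{n}\widehat\sigma(a_{i}-a_{j})\;\leq\;n+n(n-1)\epsilon,
\end{equation*}
the diagonal contributing $n\,\widehat\sigma(0)=n$ and each of the $n(n-1)$ off-diagonal terms being bounded by $\epsilon$. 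Combining the two displays and dividing by $n^{2}$ yields $c\leq \tfrac1n+\epsilon$; letting $n\to+\infty$ gives $c\leq\epsilon$, and then $\epsilon\to0$ forces $c=0$, a contradiction.

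The only genuinely new ingredient compared with the vdC case is the middle paragraph. In the vdC setting $\widehat\sigma$ vanishes identically on $D$, so \emph{any} large finite configuration works; here we only control $\widehat\sigma(d)$ for large $d$ and must therefore manufacture a configuration whose \emph{differences} are large. I expect the sparsification step---passing from a size-$nM$ configuration to the equally spaced representatives $b_{M},b_{2M},\dots,b_{nM}$, and checking that this simultaneously keeps the differences inside $D$ and pushes them past $M$---to be the crux of the argument; everything else is the familiar positive-definiteness estimate.
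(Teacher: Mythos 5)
Your proof is correct and follows exactly the route the paper intends: it invokes Theorem \ref{enhanced-sp.char} to reduce to the FC$^+$ property and then adapts the $L^{2}(\sigma)$ orthogonality estimate from the vdC ``Sets of differences'' corollary of Subsection \ref{ex.}, which is precisely what the paper means when it omits the proof as ``similar.'' The sparsification step (passing from a size-$nM$ configuration to $a_j=b_{jM}$ so that all differences exceed $M$ while remaining in $D$) is the right way to fill the gap left by having only $\widehat\sigma(d)\to 0$ along $D$ instead of $\widehat\sigma$ vanishing on $D$, and your quantifier order ($\epsilon$, then $M$, then $n\to\infty$) is sound.
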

       
       \begin{corollary}[Linear transformations]
Let $d$ and $e$ be positive integers, and $L$ be a linear
transformation from $\Z^d$ into $\Z^e$ (i.e. an $e\times d$ matrix with
integers entries).
\begin{enumerate}
        \item
        If $D$ is an enhanced vdC set in $\Z^d$ and if $0\notin L(D)$, then $L(D)$ is an enhanced vdC set in $\Z^e$.
        \item
        Let $D\subset\Z^d$. If the linear map $L$ is one to one, and if $L(D)$ is an enhanced vdC set
        in $\Z^e$, then $D$ is an enhanced vdC set in $\Z^d$.
\end{enumerate}
\end{corollary}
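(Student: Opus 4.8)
The plan is to reduce everything to the spectral characterization of enhanced vdC sets as FC$^{+}$ sets (Theorem~\ref{enhanced-sp.char}, Definition~\ref{fcplus-def}) and then to run, \emph{mutatis mutandis}, the measure-pullback arguments used for ordinary vdC sets in Corollary~\ref{linear-transf}. The only genuinely new feature is that the defining spectral condition is now an \emph{asymptotic} one, namely $\widehat\sigma(d)\to0$ as $|d|\to+\infty$ in $D$, rather than the pointwise vanishing $\widehat\sigma(d)=0$ used for vdC sets; transporting such a limit condition along $L$ is where the real work lies.

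For assertion (1), suppose $D$ is FC$^{+}$ in $\Z^d$, that $0\notin L(D)$, and let $\sigma$ be a positive measure on $\T^e$ with $\widehat\sigma(m)\to0$ as $|m|\to+\infty$ in $L(D)$. As in the proof of Corollary~\ref{linear-transf}, I would introduce the dual map $^t\!L\colon\T^e\to\T^d$ determined by $k\cdot{}^t\!L(x)=L(k)\cdot x$, and set $\sigma'$ to be the image of $\sigma$ under $^t\!L$, so that $\widehat{\sigma'}(k)=\widehat\sigma(L(k))$ for every $k\in\Z^d$. To deduce continuity of $\sigma'$ from the FC$^{+}$ property of $D$, I must check that $\widehat{\sigma'}(k)=\widehat\sigma(L(k))\to0$ as $|k|\to+\infty$ in $D$; once $\sigma'$ is continuous, the identity $\widehat{\sigma'}(k)=\widehat\sigma(L(k))$ forces $\sigma$ to have no atom at $0$ as well, exactly as in the vdC case. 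This verification goes through precisely when $|L(k)|\to+\infty$ whenever $|k|\to+\infty$ with $k\in D$, i.e. when $L$ is finite-to-one along $D$; this is automatic when $L$ is injective (then $|L(k)|\geq c|k|$), and is the one place where the hypotheses on $L$ must be put to use.

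For assertion (2), $L$ is one-to-one. First I would reduce to an endomorphism of $\Z^d$ by the same lattice-normalization step as in Corollary~\ref{linear-transf}: by the classical structure lemma (\cite{godement}, Exercise~8 of Chapter~31) I choose a basis $n_1,\ldots,n_e$ of $\Z^e$ and positive integers $p_1,\ldots,p_d$ with $L(\Z^d)=p_1\Z n_1+\cdots+p_d\Z n_d$, which lets me regard $L(D)$ as an enhanced vdC set inside $\Z^d\simeq\Z n_1+\cdots+\Z n_d$ and $L$ as an injective endomorphism of $\Z^d$. Then $^t\!L\colon\T^d\to\T^d$ is onto and finite-to-one (it is $|\det L|$-to-one). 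Given a positive measure $\sigma'$ on $\T^d$ with $\widehat{\sigma'}(k)\to0$ as $|k|\to+\infty$ in $D$, I use surjectivity of $^t\!L$ to write $\sigma'={}^t\!L_{*}\sigma$ for a positive measure $\sigma$ on $\T^d$, whence $\widehat\sigma(L(k))=\widehat{\sigma'}(k)$; since $L$ is injective, $L(k)\to\infty$ in $L(D)$ as $k\to\infty$ in $D$, so $\widehat\sigma(m)\to0$ along $L(D)$, and the FC$^{+}$ property of $L(D)$ makes $\sigma$ continuous. Finally, because $^t\!L$ is finite-to-one, the pushforward $\sigma'={}^t\!L_{*}\sigma$ inherits continuity (an atom of $\sigma'$ at $z$ would force $\sigma$ to charge the finite fibre $({}^t\!L)^{-1}(z)$), so $D$ is enhanced vdC.

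I expect the main obstacle to be exactly the transfer of the asymptotic spectral condition through $L$ in part (1): unlike the pointwise vanishing available for vdC sets, the condition $\widehat\sigma\to0$ survives pullback only when $L$ does not collapse an infinite part of $D$ onto a bounded set. Establishing that $L$ is finite-to-one along $D$ (automatic from the injectivity hypothesis in (2), and the reason the standing assumption $0\notin L(D)$ is imposed in (1)) is the single point at which the enhanced case requires an argument with no counterpart in the proof of Corollary~\ref{linear-transf}; everything else is a routine transcription of that proof.
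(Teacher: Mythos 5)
Your part (2) is essentially correct and is exactly the route the paper intends: reduce via Theorem~\ref{enhanced-sp.char} to the FC$^+$ formulation, normalize the lattice $L(\Z^d)$ as in Corollary~\ref{linear-transf}, and push measures through the finite-to-one, onto map $^t\!L$; here injectivity of $L$ gives $|L(k)|\geq c|k|$, so the \emph{asymptotic} spectral hypothesis does transfer. The genuine gap is in part (1). You correctly isolate the needed condition --- that $|L(k)|\to+\infty$ as $|k|\to+\infty$ in $D$, i.e.\ that every fibre $L^{-1}(m)\cap D$ is finite --- but you then assert that this is what the hypothesis $0\notin L(D)$ provides (``the reason the standing assumption $0\notin L(D)$ is imposed in (1)''). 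That is false: $0\notin L(D)$ only says that $D$ avoids the single coset $\ker L$, not that $D$ meets every coset of $\ker L$ finitely, and statement (1) does \emph{not} assume $L$ injective. Concretely, take $d=2$, $e=1$, $L(x,y)=x$, and $D=\{(n,n^2):n\in\N\}\cup\{(1,m):m\in\N\}$. The first piece is enhanced vdC (Subsection~\ref{polex}), hence so is the superset $D$ (the FC$^+$ hypothesis along $D$ implies it along any infinite subset), and $0\notin L(D)=\N$; yet $L$ collapses the infinite set $\{(1,m):m\in\N\}$ to the point $1$. For a positive measure $\sigma$ on $\T$ with $\widehat\sigma\to0$ along $\N$, the pullback satisfies $\widehat{\sigma'}(1,m)=\widehat\sigma(1)$ for every $m$, and $\widehat\sigma(1)$ need not vanish; so $\widehat{\sigma'}$ does not tend to $0$ along $D$ and the FC$^+$ property of $D$ cannot be invoked. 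Thus your argument proves (1) only under the additional hypothesis that $L$ is finite-to-one on $D$, which is strictly weaker than what is claimed.

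The gap is not cosmetic, because the missing case requires ideas with no counterpart in the vdC proof. When only finitely many fibres $L^{-1}(m)\cap D$ are infinite, one can repair the argument: those fibres lie in finitely many cosets $k_0+\ker L$ with $k_0\notin\ker L$ (this is where $0\notin L(D)$ really enters), any subset of such a finite union of cosets fails to be vdC (convolve measures of the form $\tfrac12(\delta_0+\delta_{x_0})$ whose transforms vanish identically on the cosets, as in the proof of Corollary~\ref{lattices}), and then the Ramsey property forces the complementary, finite-to-one part of $D$ to be enhanced vdC, where your pullback runs. But when infinitely many fibres are infinite and $\widehat\sigma$ is nonzero on all of $L(D)$, this scheme stalls; one can check that such a configuration would force $L(D)$ to be a vdC set which is not FC$^+$, so ruling it out is tied to the paper's open Question~\ref{first}. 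In short: part (2) stands, but part (1) as written has a real hole, and you should either add the finite-to-one hypothesis or supply the Ramsey-plus-coset argument (and address its residual case) explicitly.
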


\begin{corollary}[Lattices are (enhanced vdC)*]
        If $G$ is any $d$-dimensional lattice in $\Z^d$, and if $D$ is an enhanced vdC
        set in $\Z^d$, then $G\cap D$ is an enhanced vdC set in $\Z^d$.
        \end{corollary}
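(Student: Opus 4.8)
The plan is to transcribe the proof of Corollary~\ref{lattices} (the analogous statement for ordinary vdC sets), substituting the FC$^+$ spectral characterization of enhanced vdC sets (Theorem~\ref{enhanced-sp.char}) for Theorem~\ref{spectralchar}, and the enhanced Ramsey property (the first corollary of this subsection) for the vdC one. First I would record that, since $G$ is a $d$-dimensional lattice in $\Z^d$, it has finite index $m=[\Z^d:G]$; hence there are finitely many points $z_1,\ldots,z_k\in\Z^d\setminus G$ giving the coset decomposition $\Z^d=G\cup\bigcup_{i=1}^k(z_i+G)$, and therefore $D=(G\cap D)\cup\bigcup_{i=1}^k\big((z_i+G)\cap D\big)$.

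The key step is to show that no translate $z+G$ with $z\notin G$, and indeed no subset of such a translate, is enhanced vdC. This is the spectral form of the remark ``test the definition on the indicator of $G$'' used in Corollary~\ref{lattices}. Concretely, I would let $G^\perp\subset\T^d$ denote the annihilator of $G$, a finite subgroup of order $m$, and set $\sigma=\frac1m\sum_{y\in G^\perp}\delta_y$. By orthogonality of characters on the finite group $\Z^d/G$ one gets $\widehat\sigma(n)=\mathbf 1_{G}(n)$, so $\widehat\sigma$ vanishes \emph{identically} on $z+G$ while $\sigma(\{0\})=1/m>0$, i.e.\ $\sigma$ is not continuous. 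Since $\widehat\sigma\equiv 0$ on the whole of $z+G$, the hypothesis $\lim_{|d|\to\infty,\,d\in S}\widehat\sigma(d)=0$ of the FC$^+$ definition holds for \emph{every} $S\subseteq z+G$; by Theorem~\ref{enhanced-sp.char} no infinite such $S$ is enhanced vdC (finite sets being excluded from the definition). In particular each $(z_i+G)\cap D$ fails to be enhanced vdC.

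To conclude, I would invoke the enhanced Ramsey property, extended from two pieces to the finite union above by an immediate induction on $k$: as $D$ is enhanced vdC, at least one of the $k+1$ pieces is enhanced vdC, and since the $k$ pieces $(z_i+G)\cap D$ have just been ruled out, the remaining piece $G\cap D$ must be enhanced vdC, which is the claim.

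The one place demanding care — the feature genuinely absent from the plain vdC argument of Corollary~\ref{lattices} — is the extra decay condition $\lim_{|d|\to\infty}\widehat\sigma(d)=0$ built into the FC$^+$/enhanced vdC definitions and its behaviour under passage to subsets. This is precisely why I arrange for $\widehat\sigma$ to vanish identically on $z+G$ (rather than merely carry a single null Fourier coefficient, as would suffice for Theorem~\ref{spectralchar}): identical vanishing makes the limit hypothesis automatic for every subset, so the witnessing measure $\sigma$ disqualifies $(z_i+G)\cap D$ directly without any separate monotonicity lemma. Once this is observed the remainder is a routine copy of the vdC proof.
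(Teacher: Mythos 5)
Your proof is correct and takes essentially the approach the paper intends: the paper omits the proof of this corollary, saying it follows from the spectral characterization just as Corollary \ref{lattices} follows from Theorem \ref{spectralchar}, namely coset decomposition plus the Ramsey property, with each translate $z+G$ ($z\notin G$) disqualified by the purely atomic measure $\frac1m\sum_{y\in G^\perp}\delta_y$ whose Fourier transform is $\mathbf{1}_G$ --- the spectral counterpart of the paper's ``test the definition on the indicator function of $G$''. Your added observation, that the identical vanishing of $\widehat\sigma$ on $z+G$ makes the FC$^+$ decay hypothesis automatic for every infinite subset of $(z_i+G)\cap D$ (finite sets being excluded by definition), is exactly the detail needed to carry the vdC argument over to the enhanced setting, where testing the definition directly on a non-unimodular sequence such as $\mathbf{1}_G$ is not available.
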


\subsection{Questions}\label{enhancedquestions}
 
\begin{question}\label{first} Our intuition is that there exist vdC sets which are not enhanced vdC.
Is it true? Is it possible to exhibit a particular example?
\end{question}

\begin{question} \label{q2}We know (Corollary \ref{cor.gen.vdC.ineq.hilb}) that
the notions of \emph{vdC set for families in a Hilbert space}
and of vdC set coincide. Is the analogous fact true for enhanced
vdC sets?
\end{question}

\begin{question} \label{q3}We know (Corollary \ref{last}) that any vdC set can be partitionned into infinitely many vdC sets. Is the analogous fact true for enhanced vdC sets?
\end{question}

\begin{question} \label{q4}We know (Corollary \ref{cartesian}) that the Cartesian product of two vdC sets is a vdC set. Is the analogous fact true for enhanced vdC sets?
\end{question}

\subsection{Examples}\label{ex-enh-vdc}
\subsubsection{Ergodic sequences}\label{ergseq}
A sequence of integers $(d_n)_{n\in\N}$ is called ergodic if the following mean ergodic theorem is valid : given an ergodic m.p.s. $(X,\A,\mu,T)$ and $f\in L^2(\mu)$, the averages $\frac1N\sum_{n=1}^N f\circ T^{d_n}$ converge in $L^2$ to $\int f\,\text{d}\mu$ when $N$ goes to infinity.

It follows from the spectral theorem that the sequence $(d_n)$ is ergodic if and only if, for all $x\in\R\setminus\Z$,
\begin{equation}\label{ergodic}
\lim_{N\to+\infty}\frac1N\sum_{n=1}^N e(d_nx)=0\;.
\end{equation}

\begin{proposition}\label{erg-seq}
Any ergodic sequence is an enhanced vdC sequence.
\end{proposition}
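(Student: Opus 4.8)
The plan is to show that the set of values $D=\{d_n:n\in\N\}$ is FC$^+$ and then invoke Theorem~\ref{enhanced-sp.char}, which identifies the FC$^+$ and enhanced vdC properties. So I let $\sigma$ be a positive measure on $\T$ satisfying $\lim_{|d|\to+\infty,\,d\in D}\widehat\sigma(d)=0$, and aim to prove that $\sigma$ is continuous. Since every translate $\sigma'$ of $\sigma$ again satisfies $|\widehat{\sigma'}(d)|=|\widehat\sigma(d)|\to0$ while carrying the atoms of $\sigma$ to the translated positions, it is enough to prove $\sigma(\{0\})=0$ for every such measure.

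The starting point is the identity obtained by averaging Fourier coefficients along the sequence,
$$
\frac1N\sum_{n=1}^N\widehat\sigma(d_n)=\int_\T\Big(\frac1N\sum_{n=1}^N e(d_nx)\Big)\,\text{d}\sigma(x)\;.
$$
By the ergodicity criterion (\ref{ergodic}) the bracketed average tends to $0$ for every $x\in\T\setminus\{0\}$ and equals $1$ at $x=0$; being bounded by $1$, the dominated convergence theorem yields
$$
\lim_{N\to+\infty}\frac1N\sum_{n=1}^N\widehat\sigma(d_n)=\sigma(\{0\})\;.
$$
It then remains to show that this limit is in fact $0$ under the decay hypothesis on $\widehat\sigma$.

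The key step, and the only one requiring care, is to check that bounded values of $d_n$ are negligible in density. Applying (\ref{ergodic}) to the rational points $x=k/q$ for $1\le k<q$ and using Fourier inversion on $\Z/q\Z$, one finds that each residue class modulo $q$ is occupied by the sequence with density exactly $1/q$; hence $\{n:d_n=v\}$ has upper density at most $1/q$ for every $q$, so density $0$, for each fixed value $v$. By finite subadditivity, $\{n:d_n\in F\}$ has density $0$ for every finite $F\subset\Z$ (in particular $D$ is infinite, as the definition of enhanced vdC requires). Now I fix $\epsilon>0$ and choose $M$ with $|\widehat\sigma(d)|<\epsilon$ for $d\in D$, $|d|>M$. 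Splitting the average according to whether $|d_n|\le M$, bounding the first part by $\sigma(\T)\cdot\frac1N\#\{n\le N:|d_n|\le M\}$ and the second by $\epsilon$, and letting $N\to+\infty$, the first part vanishes by the density statement, leaving $\limsup_N\big|\frac1N\sum_{n=1}^N\widehat\sigma(d_n)\big|\le\epsilon$. As $\epsilon$ is arbitrary the limit is $0$, whence $\sigma(\{0\})=0$.

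The main obstacle is precisely this density computation: one must rule out the a priori possibility that $d_n$ lingers on a bounded set of values along a positive-density set of indices, which would let the average $\frac1N\sum\widehat\sigma(d_n)$ fail to register the decay of $\widehat\sigma$ at infinity. The equidistribution-mod-$q$ consequence of ergodicity disposes of this cleanly. Everything else is dominated convergence together with the translation reduction, and the final appeal to Theorem~\ref{enhanced-sp.char} completes the argument.
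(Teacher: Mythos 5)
Your proof is correct and follows essentially the same route as the paper: the dominated-convergence identity $\lim_{N\to+\infty}\frac1N\sum_{n=1}^N\widehat\sigma(d_n)=\sigma(\{0\})$ obtained from the ergodicity criterion, followed by the spectral characterization of enhanced vdC sets (Theorem \ref{enhanced-sp.char}). The only difference is that the paper declares the passage from the decay hypothesis to $\sigma(\{0\})=0$ ``immediate,'' whereas you justify it carefully --- equidistribution mod $q$ forces each level set $\{n:d_n=v\}$ to have density zero, and translation handles atoms away from $0$ --- a worthwhile elaboration, since for ergodic sequences that do not satisfy $|d_n|\to+\infty$ this step genuinely requires your density argument.
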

\begin{proof}
Let $(d_n)$ be an ergodic sequence and $\sigma$ a finite measure on the torus. Using the dominated convergence theorem we deduce from (\ref{ergodic}) that
$$
\lim_{N\to+\infty}\frac1N\sum_{n=1}^N\widehat\sigma(d_n) =\sigma\left(\{0\}\right)\;.
$$
Hence it is immediate that the sequence $(d_n)$ is FC$^+$.
\end{proof}

Propostion \ref{erg-seq} can be used to exhibit many examples of enhanced vdC sets. 

\medbreak
(i) In \cite{B-K-Q-W} the authors consider sequences of the form $d_n=[a(n)]$ where the 
function $a$ belongs to some Hardy field. They characterize those of them which are 
ergodic. See Theorems 3.2, 3.3, 3.4, 3.5 and 3.8 in \cite{B-K-Q-W}.
Here are some examples of ergodic sequences, coming from \cite{B-K-Q-W}.
\begin{itemize}\item
$\{[bn^c]\,:\,n\in\N\}$, where $c$ is irrational $>1$ and $b\neq0$.
\item
$\{[bn^c+dn^a]\,:\,n\in\N\}$, where $b,d\neq0$, $b/d$ is irrational, $c\geq1$, $a>0$ and $a\neq c$.
\item
$\{[bn^c(\log n)^d]\,:\,n\in\N\}$, where $b\neq0$, $c$ is irrational $>1$ and $d$ is any number.
\item
$\{[bn^c(\log n)^d]\,:\,n\in\N\}$, where $b\neq0$, $c$ is rational $>1$ and $d\neq0$.
\item
$\{[bn^c+d(\log n)^a]\,:\,n\in\N\}$, where $b,d\neq0$, $c\geq1$, and $a>1$.
\end{itemize}

The paper \cite{B-K-Q-W} contains also interesting examples of non ergodic sequences. For example the sequence $[\sqrt2 n^{3/2} + \log n]$ is not ergodic, whereas 
sequences $[\sqrt2 n^{3/2} + (\log n)^2]$ and $[\sqrt2 n^{\pi/2} + \log n]$ are ergodic. Is $\{[\sqrt2 n^{3/2} + \log n]\,:\,n\in\N\}$ an enhanced vdC set ? We leave this as an open question.

\medbreak
(ii) In \cite{VB-IH2} a mean ergodic theorem along a \emph{tempered sequence} is proved. More precisely, it is shown (see Theorem 8.1 in \cite{VB-IH2}) that, for any \emph{tempered function}\footnote{A real valued function $g$ defined on a half line $[\alpha,+\infty)$ is called a tempered function if there exist $k\in\N$ such that $g$ is $k$ times continuously differentiable, $g^{(k)}(x)$ tends monotonically to zero as $x\to+\infty$, and $\lim_{x\to+\infty}x\left|g^{(k)}(x)\right|=+\infty$. This notion is classical in the theory of uniform distribution, see \cite{cigler}.} $g$, the sequence $([g(n)])$ is ergodic. 
This gives a new large class of examples. For example, the function $g(x)=x^a\left(\cos((\log x)^b)+2\right)$, where $a>0$ and $0<b<1$ is a tempered function (which does not belong to any Hardy field).

\medbreak
(iii) A different type of example is provided by so-called ``automatic sequences''. 
Characterizations of ergodic automatic sequences are well known (see for example 
\cite{mauduit}). A typical example of such a sequence is the Morse sequence 
$(0,3,5,6,9,10,\ldots)$, which is the sequence of integers the sum of whose digits in 
base two is even.

\medbreak
(iv) As a consequence of the Wiener-Wintner ergodic theorem, we know that for any 
weakly mixing m.p.s. $(X,\A,\mu,T)$ and for any $A\in\A$ with $\mu(A)>0$, for almost 
every $x\in X$ the sequence $\left\{n\in\N\,:\,T^nx\in A\right\}$  is ergodic.

In \cite{LLPVW} other types of random sequences that are almost surely ergodic are 
constructed, namely sequences of the form $\left(\sum_{n=0}^{N-1} f\circ T^n\right)$
where $f$ is an integer valued function on a m.p.s. $(X,\A,\mu,T)$, under conditions on the m.p.s. and the function.

\subsubsection{Polynomial sequences}\label{polex}

The examples given in Subsection \ref{pol+prime} not only have the ordinary vdC-property, 
but they have also the enhanced vdC property (in $\Z^d$). We restrict ourselves here to 
the one-parameter case.

The following criterion, which generalizes Proposition \ref{erg-seq}, 
is useful in obtaining additional interesting examples.
\begin{proposition}\label{source}
Let $D=(d_n)_{n\in\N}$ be a sequence of nonzero integers.
Suppose that 
\begin{enumerate}\item[(i)] For all $q\in\N$, $D\cap q\Z$ has positive upper density in $D$;
\item[(ii)] For all irrational real numbers $x$, the sequence $(d_nx)$ is 
uniformly distributed mod 1.\end{enumerate}
Then $D$ is an enhanced vdC sequence.
\end{proposition}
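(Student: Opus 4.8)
The plan is to establish the FC$^{+}$ property and then appeal to Theorem~\ref{enhanced-sp.char}, which identifies it with the enhanced vdC property. So I take a positive measure $\sigma$ on $\T$ with $\widehat\sigma(d)\to 0$ as $|d|\to+\infty$ in $D$, and must show that $\sigma$ is continuous. Since replacing $\sigma$ by its image $\sigma_{x_0}$ under a rotation $x\mapsto x-x_0$ multiplies $\widehat\sigma$ by a unimodular factor, it changes neither $|\widehat\sigma|$ nor the standing hypothesis, while $\sigma_{x_0}(\{0\})=\sigma(\{x_0\})$. Hence it is enough to prove the single assertion that every positive $\sigma$ satisfying the hypothesis has $\sigma(\{0\})=0$: applying this to all rotates then yields continuity. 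In fact I will prove the stronger statement $\sigma(\Q/\Z)=0$, that $\sigma$ carries no mass on the rationals.

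Two preliminary observations drive the argument. First, hypothesis (ii) forces the values $d_n$ to escape every bounded set in density: if some $v\neq0$ occurred among the $d_n$ with positive upper density, then for irrational $x$ the empirical distributions of $(d_nx)$ would retain a fixed atom at $vx$, contradicting uniform distribution. Consequently $\{n:|d_n|\le M\}$ has density zero for every $M$, whence $\frac1N\sum_{n\le N}\phi(d_n)\to 0$ for any bounded $\phi$ on $D$ with $\phi(d)\to0$ as $|d|\to\infty$; I will use this for $\phi=\widehat\sigma$ and for ``detected'' variants of it. Second, hypothesis (i) enters through the sets $A_q:=\{n:q\mid d_n\}$, which have positive upper density: for $n\in A_q$ and any rational $x$ whose denominator divides $q$ one has $e(d_nx)=1$, so the part of $\sigma$ supported on $\frac1q\Z/\Z$ contributes the \emph{constant} $\sigma(\frac1q\Z/\Z)$ to $\widehat\sigma(d_n)$.

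Now I fix $Q$, write $A=A_{Q!}$, $R_{[Q!]}:=\sigma(\frac1{Q!}\Z/\Z)$ and $\eta_Q:=\sigma(\Q/\Z)-R_{[Q!]}$, so that $R_{[Q!]}\uparrow\sigma(\Q/\Z)$ and $\eta_Q\to0$ as $Q\to\infty$. Splitting $\sigma$ into its irrational part $\sigma_{\mathrm{irr}}$, its part on $\frac1{Q!}\Z/\Z$, and the remaining rational part (of total mass $\eta_Q$), the second observation gives, for every $n\in A$, the decomposition $\widehat\sigma(d_n)=\widehat{\sigma_{\mathrm{irr}}}(d_n)+R_{[Q!]}+\rho_n$ with $|\rho_n|\le\eta_Q$. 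I then average over $n\in A\cap[1,N]$ along a subsequence $N_k$ realizing the upper density $\alpha:=\overline d(A)>0$, and compare two evaluations of the limit. On one hand, by the first observation the average of $\widehat\sigma(d_n)$ tends to $0$. On the other hand, the constant term averages to $R_{[Q!]}$, the $\rho_n$-average has modulus at most $\eta_Q$, and the $\widehat{\sigma_{\mathrm{irr}}}(d_n)$-average tends to $0$: it equals $\int_{\T}\frac{N_k}{|A\cap[1,N_k]|}\,G_{Q,N_k}(x)\,d\sigma_{\mathrm{irr}}(x)$, where $G_{Q,N}(x)=\frac1N\sum_{n\le N}\mathbf{1}[Q!\mid d_n]\,e(d_nx)$, and the orthogonality identity $\mathbf{1}[Q!\mid d]=\frac1{Q!}\sum_{j=0}^{Q!-1}e(dj/Q!)$ rewrites $G_{Q,N}(x)=\frac1{Q!}\sum_{j}g_N(x+j/Q!)$ with $g_N(y)=\frac1N\sum_{n\le N}e(d_ny)$; for irrational $x$ each $x+j/Q!$ is irrational, so $G_{Q,N}(x)\to0$ by (ii), the prefactor stays bounded (it tends to $1/\alpha$ along $N_k$), and dominated convergence applies since $\sigma_{\mathrm{irr}}$ lives on the irrationals. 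Passing to a further subsequence so that the $\rho_n$-average converges, these two evaluations force $R_{[Q!]}\le\eta_Q$; letting $Q\to\infty$ gives $\sigma(\Q/\Z)\le 0$, hence $\sigma(\Q/\Z)=0$, which completes the proof.

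I expect the main obstacle to be precisely the term $\widehat{\sigma_{\mathrm{irr}}}(d_n)$: one needs its average over the \emph{thin} positive-density set $A_{Q!}$ to vanish, whereas hypothesis (ii) only supplies equidistribution of $(d_nx)$ along the \emph{full} index set, and a positive-density subsequence of a uniformly distributed sequence need not itself be uniformly distributed. The device that circumvents this is the detector identity above, which expresses the restricted average through the unrestricted averages $g_N(x+j/Q!)$ at the irrational points $x+j/Q!$, where (ii) is directly applicable; the positivity of $\alpha$ coming from (i) is exactly what keeps the normalising prefactor $N_k/|A\cap[1,N_k]|$ bounded. This is the point where conditions (i) and (ii) genuinely combine.
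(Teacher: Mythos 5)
Your proof is correct and takes essentially the same route as the paper's: both establish the FC$^{+}$ property (invoking Theorem \ref{enhanced-sp.char}), and both rest on the same key device, the identity $\mathbf{1}[Q!\mid d]=\frac{1}{Q!}\sum_{j=0}^{Q!-1}e(dj/Q!)$, which turns averages over the positive-density index set $\{n:Q!\mid d_n\}$ provided by (i) into full averages of $e\bigl(d_n(x+j/Q!)\bigr)$ at irrational points where (ii) applies, leading in both cases to the conclusion $\sigma(\Q/\Z)=0$. The differences are organizational rather than substantive — the paper integrates the normalized trigonometric polynomials $P_{q,k}$ against $\sigma$ and applies dominated convergence twice, whereas you decompose $\sigma$ into its irrational part, its part on $\frac{1}{Q!}\Z/\Z$ and a small remainder and evaluate one restricted average in two ways — and you additionally make explicit two points the paper leaves tacit (that $\{n:|d_n|\le M\}$ has density zero, and the rotation argument upgrading the absence of an atom at $0$ to continuity).
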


\begin{proof}
Fix $q\in\N$. There exists an increasing sequence of positive integers $\left(N_k^{(q)}\right)_{k\in\N}$ such that
\begin{equation}\label{density}
\liminf_{k\to+\infty}\frac1{N_k^{(q)}}\#\left\{n\in \left[1,N_k^{(q)}\right]\,:\,q!\text{ divides }d_n\right\}\ >0\;.\end{equation}

Let us define a family of uniformly bounded trigonometric polynomials with spectrum
contained in $D$, by the formula
\begin{equation}\label{trig-p}
P_{q,k}(x):=\frac1{\#\{n\leq N_k^{(q)},\,q!|d_n\}}\sum_{n\leq N_k^{(q)},\,q!|d_n}e\left(d_n x\right)\;.
\end{equation}

Replacing if necessary the sequence $\left(N_k^{(q)}\right)$ by a subsequence, we can 
suppose that, for all rational numbers $y$, the sequence $\left(P_{q,k}(y)\right)$ 
converges as $k\rightarrow+\infty$.

Consider now an irrational real number $x$. We have
\begin{align*}
P_{q,k}(x)&=&\frac1{\#\{n\leq N_k^{(q)},\,q!|d_n\}}\sum_{n\leq N_k^{(q)}}e\left(d_n x\right)\frac1{q!}\sum_{j=0}^{q!-1}e(d_nj/q!)\\&=&
\frac{N_k^{(q)}}{\#\{n\leq N_k^{(q)},\,q!|d_n\}}\frac1{q!}\sum_{j=0}^{q!-1}
\frac1{N_k^{(q)}}\sum_{n\leq N_k^{(q)}}e\left(d_n\left(x+\frac{j}{q!}\right)\right).
\end{align*}
Using (\ref{density}) and hypothesis (ii), we see that $\lim_{k\to+\infty}P_{q,k}(x)=0$.

We denote by $g_q$ the pointwise limit of the sequence $(P_{k,q})_{k\in\N}$. For all 
rational numbers $y$, we have $P_{q,k}(y)=1$ for all large enough $q$. 

Letting $q$ go to infinity, we see that the sequence $(g_q)$ converges everywhere to 
the characteristic function of the rationals. Applying the dominated convergence theorem 
twice, we observe that, for all finite measures $\sigma$ on $\T$,
$$
\lim_{q\to+\infty}\lim_{k\to+\infty}\int_{\T}P_{q,k}\,\text{d}\sigma =\sigma \left(\Q/\Z\right).
$$

Let $\sigma$ be a positive measure on $\T$ such that $\lim_{n\to+\infty}\widehat
\sigma(d_n)=0$. From (\ref{trig-p}), we deduce that $\lim_{k\to+\infty}\int_{\T}P_{q,k}\,
\text{d}\sigma =0$, hence $\sigma \left(\Q/\Z\right)=0$, and in particular $\sigma(\{0\})=0$. 

We have proved that $D$ is an FC$^+$ set.

\end{proof}

From Proposition \ref{source}, one can deduce the following (not too surprising) corollaries.
\begin{corollary}
Let $p$ be a polynomial with integer coefficients.
The sequence $(p(n))_{n\in\N}$
is enhanced vdC if and only if for all positive integers $q$, there
exists $n\geq1$ such that $q$ divides $p(n)$.
\end{corollary}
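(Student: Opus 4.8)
The plan is to prove the two implications separately, obtaining sufficiency from Proposition~\ref{source} and necessity from the corollary ``Lattices are (enhanced vdC)*''. Throughout I may assume that $p$ is non-constant: a nonzero constant polynomial violates the divisibility condition for all large $q$, and the zero polynomial yields the degenerate constant sequence $(0,0,\ldots)$, which is excluded because enhanced vdC sets live in $\Z\setminus\{0\}$. A non-constant $p$ has only finitely many integer roots, so after discarding the finitely many indices $n$ with $p(n)=0$ --- a modification that changes neither the asymptotic densities below nor the enhanced vdC property --- I may treat $D=(p(n))_{n}$ as a sequence of nonzero integers, as Proposition~\ref{source} requires.

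For necessity, suppose $D$ is enhanced vdC and fix $q\in\N$. Since $q\Z$ is a (one-dimensional) lattice in $\Z$, the corollary ``Lattices are (enhanced vdC)*'' shows that $q\Z\cap D$ is again enhanced vdC, hence infinite and in particular nonempty. Thus some value $p(n)$ lies in $q\Z$, i.e.\ there exists $n\geq1$ with $q\mid p(n)$; this is precisely the divisibility condition.

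For sufficiency I would check the two hypotheses of Proposition~\ref{source}. Hypothesis~(ii) follows from Weyl's equidistribution theorem: for irrational $x$ the polynomial $n\mapsto p(n)x$ has, as its coefficient of highest degree, the product of $x$ with the leading coefficient of $p$, an irrational number since that leading coefficient is a nonzero integer; hence $(p(n)x)_{n}$ is uniformly distributed mod~$1$. For hypothesis~(i), fix $q\in\N$ and use the divisibility hypothesis to pick $n_{0}$ with $q\mid p(n_{0})$. Because $p$ has integer coefficients we have $p(n)\equiv p(n_{0})\equiv 0\pmod q$ for every $n\equiv n_{0}\pmod q$, and this residue class has density $1/q$ in $\N$, so
$$
\liminf_{N\to+\infty}\frac1N\,\#\{\,n\leq N\,:\,q\mid p(n)\,\}\ \geq\ \frac1q\ >\ 0 .
$$
Thus $D\cap q\Z$ has positive (lower, hence upper) density in $D$, and Proposition~\ref{source} applies to give that $D$ is enhanced vdC.

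I do not expect a genuine difficulty here: one-dimensional Weyl equidistribution and the periodicity of $p$ modulo $q$ supply both hypotheses of Proposition~\ref{source} at once, and the only real care needed is the bookkeeping around the degenerate constant and zero polynomials and the finitely many integer roots of $p$, dealt with by the asymptotic and finite-modification remarks above.
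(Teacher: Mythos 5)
Your proof is correct and takes essentially the same route the paper intends: sufficiency via Proposition~\ref{source} (Weyl's theorem giving hypothesis (ii), periodicity of $p$ modulo $q$ giving hypothesis (i)), and necessity via the ``lattices are (enhanced vdC)$^*$'' corollary, exactly parallel to the paper's proof of Proposition~\ref{polynomials}. Your explicit handling of the degenerate constant and zero polynomials and of the finitely many roots of $p$ merely fills in bookkeeping that the paper's terse ``one can deduce'' leaves implicit.
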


\begin{corollary}
Let $f$ be a (non zero) polynomial with integer coefficients and zero constant term. Sequences $\{(f(p-1)): p\in P\}$ and $\{(f(p+1)):p\in P\}$ are enhanced vdC.
\end{corollary}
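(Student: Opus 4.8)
The plan is to verify the two hypotheses of Proposition \ref{source} for the sequence $D=(d_n)_{n\in\N}$, where $d_n:=f(p_n-1)$ and $(p_n)_{n\in\N}$ is the increasing enumeration of the primes $P$ (the case $\{f(p+1):p\in P\}$ will be entirely analogous). Since $f$ is a non-zero polynomial it has only finitely many integer roots, so $d_n=0$ for at most finitely many $n$; discarding these indices affects neither hypothesis nor the conclusion, so I may assume all $d_n\neq0$, as required by the proposition.

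For hypothesis (i), fix $q\in\N$. The key observation is that because $f$ has zero constant term, $f(m)\equiv f(0)=0\pmod q$ whenever $m\equiv0\pmod q$ (each monomial $a_km^k$ with $k\geq1$ is divisible by $q$). Hence whenever $p\equiv1\pmod q$ one has $p-1\equiv0\pmod q$, so $q\mid f(p-1)$. As $\gcd(1,q)=1$, Dirichlet's theorem on primes in arithmetic progressions (equivalently, the prime number theorem for progressions) guarantees that the primes $p\equiv1\pmod q$ have relative density $1/\varphi(q)>0$ among all primes. Therefore $D\cap q\Z$ has positive upper density in $D$, which is exactly hypothesis (i).

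For hypothesis (ii), fix an irrational real number $x$ and set $\tilde h(t):=x\,f(t-1)$. Since $f$ is non-zero with vanishing constant term it is non-constant, so $f(t-1)$ is non-constant and at least one of its non-constant coefficients is a non-zero integer; multiplying by $x$ yields a non-constant coefficient of $\tilde h$ that is irrational. Thus $\tilde h-\tilde h(0)$ has an irrational coefficient, and the Vinogradov-type equidistribution result (\ref{vino}) (see Theorem \ref{vino-rhin} and \cite{rhin}) applies: the sequence $(\tilde h(p))_{p\in P}=(f(p-1)x)_{p\in P}$ is uniformly distributed mod $1$. This is precisely hypothesis (ii).

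With both hypotheses in hand, Proposition \ref{source} gives that $\{f(p-1):p\in P\}$ is an enhanced vdC sequence. The sequence $\{f(p+1):p\in P\}$ is treated identically, using primes $p\equiv-1\pmod q$ (again coprime to $q$) for hypothesis (i) and the polynomial $\tilde h(t)=x\,f(t+1)$ for hypothesis (ii). I do not anticipate any serious obstacle, since both verifications reduce to standard inputs; the one point requiring slight care is confirming that $x\,f(t\mp1)$ genuinely possesses an irrational non-constant coefficient, and this rests solely on $f$ being non-constant, which is guaranteed by its being non-zero with zero constant term.
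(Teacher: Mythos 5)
Your proof is correct and follows essentially the same route as the paper, which states this corollary as a direct consequence of Proposition \ref{source}: your verification of hypothesis (i) (zero constant term plus the prime number theorem in arithmetic progressions applied to $p\equiv\pm1\pmod q$) and of hypothesis (ii) (Rhin's equidistribution result (\ref{vino}) applied to $\tilde h(t)=x\,f(t\mp1)$) is exactly the intended derivation, including the harmless discarding of the finitely many zero terms.
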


Let us describe one more family of examples, coming from \emph{generalized polynomials}\footnote{The class of polynomial functions is obtained, starting from the constants and the identity function $x\mapsto x$, by the use of addition and multiplication. To define the class of generalized polynomials just add the greatest integer function as an allowed operation.}, dealt with in \cite{VB-IH}. Let $q$ be an integer valued generalized polynomial. Corollary 3.5 of \cite{VB-IH} gives a sufficient condition for the sequence $(q(n))$ to be an \emph{averaging sequence of recurrence} and this condition is the same as the hypothesis of our Proposition \ref{source}. In particular averaging sequences of recurrence in \cite{VB-IH} (see page 106), provide examples of enhanced vdC sets. Here are two of these examples.
\begin{itemize}\item
For $\alpha_1,\alpha_2,\ldots,\alpha_k$ non zero real numbers and $k\geq3$,$$
\{[\alpha_1n][\alpha_2n]\ldots[\alpha_kn]\,:\,n\in\N\} \quad\text{is an enhanced vdC-set.}
$$\item
For $\alpha$ a non zero real number,
$
\{[\alpha n]n^2\,:\,n\in\N\} $ is an enhanced vdC-set.
\end{itemize}

\section{Van der Corput sets and sets of recurrence}\label{vdc-recur}
In this section we discuss some links between the vdC property and recurrence in dynamical systems. 
\subsection{Sets of strong recurrence}\label{def-rec-sec}

        Recall that
a subset $D$ of $\Z$ is a \emph{set of recurrence} if, given any m.p.s.
$(X,\A,\mu,T)$ and
any subset $A$ in $\A$ of positive $\mu$-measure, there exists $d\in
D,\,d\neq0$ such that $\mu\left(A\cap T^dA\right)>0$.
\begin{definition}\label{rec-def}
An infinite subset $D$ of $\Z$ is a \emph{set of  strong recurrence}
 if, given any m.p.s.
$(X,\A,\mu,T)$ and
any subset $A$ in $\A$ of positive $\mu$-measure,
$$\limsup_{d\in D, |d|\to+\infty}\mu\left(A\cap T^dA\right)>0\;.$$
\end{definition}

One of the reasons to be interested in sets of strong recurrence is that they naturally appear in combinatorial applications. See for example Theorem~4.1 in \cite{bergelson}.

Alan Forrest (\cite{AF}) gave an example of a set of recurrence
which is not a set of strong recurrence.

\subsection{VdC sets and sets of recurrence}
Recall once more the definition of a vdC set (cf. Definition \ref{vdc-def}).

A set of non zero integers $D$ is a van der
Corput set if, for any sequence $(u_{n})_{n\in\N}$ of complex numbers
of modulus 1 such that
$$
\forall d\in D,\quad\gamma(d):=\lim_{N\to+\infty}
\frac1{N}\sum_{n=1}^{N}u_{n+d}\overline{u_{n}}=0\;,
$$
we have
$$
\lim_{N\to+\infty} \frac1{N}\sum_{n=1}^{N}u_{n}=0\;.
$$

We know that we obtain an equivalent definition if we replace in the
last sentence ``any sequence $(u_{n})_{n\in\N}$ of complex numbers
of modulus 1'' by ``any bounded sequence $(u_{n})_{n\in\N}$ of complex
numbers''. (This is a consequence of the generalized vdC inequality,
as Corollary \ref{cor.gen.vdC.ineq.hilb} follows from Proposition~\ref{gen.vdC.ineq.hilb}.)

A set $D$ is a set of recurrence if and only if it is
\emph{intersective}, namely satisfies the following condition: for any set of integers $E$ of
positive upper density, one has $D\cap(E-E)\neq\emptyset$. This fact is well known (see
\cite{Anne} and \cite{bergelson}). This fact is utilized in the proof of the following theorem.

\begin{theorem}\label{vdc01}Let $D\subset\Z\setminus\{0\}$.
The set $D$ is a set of recurrence if and only if it satisfies the following van
der Corput's type property :
for any sequence $(u_{n})_{n\in\N}$ of $0$'s and $1$'s such that
$$
\forall d\in D,\quad\gamma(d):=\lim_{N\to+\infty}
\frac1{N}\sum_{n=1}^{N}u_{n+d}u_{n}=0
$$
we have
$$
\lim_{N\to+\infty} \frac1{N}\sum_{n=1}^{N}u_{n}=0\;.
$$
\end{theorem}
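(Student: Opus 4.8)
The plan is to exploit the dictionary between $\{0,1\}$-valued sequences and subsets of $\N$, together with the equivalence, recalled just above (and in the Introduction), between the notions \emph{set of recurrence}, \emph{intersective set}, and \emph{set of combinatorial recurrence}. Given a $\{0,1\}$-valued sequence $(u_n)_{n\in\N}$, I would set $E:=\{n\in\N:u_n=1\}$. The two quantities in the statement then acquire transparent combinatorial meaning: $\frac1N\sum_{n=1}^N u_n=\frac1N\left|E\cap[1,N]\right|$, whose $\limsup$ is $\overline d(E)$, while for each $d$ one has $u_{n+d}u_n=1$ exactly when $n\in E\cap(E-d)$, so that $\frac1N\sum_{n=1}^N u_{n+d}u_n=\frac1N\left|E\cap(E-d)\cap[1,N]\right|$. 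Thus the hypothesis $\gamma(d)=0$ for all $d\in D$ says precisely that $\overline d\left(E\cap(E-d)\right)=0$ for every $d\in D$, while the conclusion $\frac1N\sum_{n=1}^N u_n\to0$ says that $\overline d(E)=0$.

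For the implication \textbf{(set of recurrence $\Rightarrow$ the vdC-type property)} I would argue by contradiction. Suppose $(u_n)$ satisfies $\gamma(d)=0$ for all $d\in D$ but $\frac1N\sum_{n=1}^N u_n$ does not tend to $0$; since these averages lie in $[0,1]$, this forces $\overline d(E)>0$. As recalled in the Introduction, a set of recurrence is a set of combinatorial recurrence, so there exists $d\in D$ with $\overline d\left(E\cap(E-d)\right)>0$. This contradicts the reformulated hypothesis $\overline d\left(E\cap(E-d)\right)=0$, and the desired Ces\`aro convergence follows.

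For the converse \textbf{(the vdC-type property $\Rightarrow$ set of recurrence)} I would prove the contrapositive. If $D$ is not a set of recurrence then, by the equivalence recalled above, it is not intersective, so there is a set $E\subset\N$ with $\overline d(E)>0$ and $D\cap(E-E)=\emptyset$. Take $u_n:=\mathbf 1_E(n)$. For every $d\in D$ we have $d\notin E-E$, hence $E\cap(E-d)=\emptyset$, so $\frac1N\sum_{n=1}^N u_{n+d}u_n=0$ for all $N$ and in particular $\gamma(d)=0$ (the limit exists and equals $0$). Yet $\limsup_N\frac1N\sum_{n=1}^N u_n=\overline d(E)>0$, so the Ces\`aro averages of $(u_n)$ do not converge to $0$. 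Hence $(u_n)$ witnesses the failure of the vdC-type property, as required.

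The only genuinely non-routine ingredient is the passage, in the forward direction, from bare intersectivity ($D\cap(E-E)\neq\emptyset$, which yields a single return carrying possibly zero density) to the \emph{density} statement of combinatorial recurrence ($\overline d\left(E\cap(E-d)\right)>0$ for some $d\in D$); this strengthening is exactly the equivalence invoked above and ultimately rests on Furstenberg's correspondence principle. It is also precisely this point that explains the restriction to $\{0,1\}$-valued (equivalently, bounded nonnegative real) sequences: permitting arbitrary modulus-one complex sequences would instead yield the genuine vdC property, which by Bourgain's example is strictly stronger than recurrence.
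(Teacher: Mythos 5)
Your proof is correct and follows essentially the same route as the paper: the converse direction via the intersectivity characterization and the indicator sequence of a set $E$ with $D\cap(E-E)=\emptyset$, and the forward direction via Furstenberg's correspondence principle upgraded to the density statement $\overline d\left(E\cap(E-d)\right)>0$ for some $d\in D$. Your closing remark correctly isolates the one non-routine point (bare intersectivity versus combinatorial recurrence), which is exactly the point the paper also emphasizes.
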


It is an exercise to verify that we obtain an equivalent statement if we
replace in the preceding sentence
``for any sequence $(u_{n})_{n\in\N}$ of $0$'s and $1$'s'' by
``for any bounded sequence $(u_{n})_{n\in\N}$ of positive real numbers''.

As a consequence of Theorem \ref{vdc01}, we obtain the well known
fact that any van der Corput set is a set of recurrence (\cite{K-MF}). Answering a
question of Ruzsa, Bourgain proved in \cite{JB} that there exist
sets of recurrence which are not vdC.

\begin{proof}[Proof of Theorem \ref{vdc01}]
If $D$ is not a set of recurrence, then there exists a set $E\subset\N$ such that
$$
\overline{d}(E):=\limsup_{N\to+\infty}\frac1N|E\cap[1,N]|>0
\quad\text{
and
}\quad
D\cap(E-E)=\emptyset\;.
$$
If we consider the sequence $(u_{n})$ defined by
$$
u_{n}=1\ \text{if}\ n\in E\ ,\quad u_{n}=0\ \text{if}\ n\notin E\;,
$$
we see that
$$
\forall d\in D,\quad
\frac1{N}\sum_{n=1}^{N}u_{n+d}u_{n}=0\;,
\quad\text{
but
}\quad
\limsup\frac1{N}\sum_{n=1}^{N}u_{n}>0\;.
$$
This proves the ``if'' part of the Theorem.

Suppose now that $D$ is a set of recurrence.
The fact that if $E$ is a set of
positive upper density, then there exists $d\in D$ such that
$\{n\in E\,:\,n+d\in E\}\neq\emptyset$ is a consequence of
Furstenberg's correspondence
principle. But this principle gives more\footnote{For a statement of Furstenberg's correspondence
principle in the form we utilize here, see for example Theorem 1.1 in \cite{VB3}.} : there exists $d\in D$ such that
the set $\{n\in E\,:\,n+d\in E\}$ has positive upper density.

Hence if a sequence $(u_{n})$ is the indicator of a set $E$ of
positive upper density, then there exists $d\in D$ such that
$$
\limsup_{N\to+\infty}
\frac1{N}\sum_{n=1}^{N}u_{n+d}u_{n}>0\;.
$$

\end{proof}

The similarity and the distinction between the recurrence property and the
vdC property is also illustrated by the next proposition (to be
compared with the spectral characterization of vdC sets - Theorem \ref{spectralchar}). 

If $(X,\A,\mu,T)$ is a m.p.s. and if $A\in\A$, we denote by $\sigma_A$ the \emph{spectral measure of $A$}, which is defined by $\mu\left(A\cap T^{-n}A\right)=\widehat\sigma_A(n)$, for any $n\in\Z$. If $f$ is a square integrable function on $X$, we denote by $\sigma_f$ the \emph{spectral measure of $f$}, which is defined by $\int f\circ T^n\cdot f\,\text{d}\mu=\widehat\sigma_f(n)$ , for any $n\in\Z$. (Of course, we have $\sigma_A=\sigma_{{\bf1}_A}$.)

\begin{proposition}\label{spectral01} Let $D\subset\Z\setminus\{0\}$.
The set $D$ is a set of recurrence if and only if one of the two equivalent following properties is satisfied:\begin{itemize}\item
In any ergodic m.p.s., if the Fourier transform $\widehat{\sigma_A}$ of a set $A$ vanishes on $D$, then $\sigma_A=0$.
\item
In any ergodic m.p.s., if the Fourier transform $\widehat{\sigma_f}$ of a bounded positive function $f$ vanishes on $D$, then $\sigma_f=0$.
\end{itemize}
\end{proposition}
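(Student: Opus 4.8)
The plan is to prove the proposition in three moves, after recording one elementary reduction. Since $\sigma_A$ is a positive measure whose total mass is $\widehat{\sigma_A}(0)=\mu(A)$, we have $\sigma_A=0$ if and only if $\mu(A)=0$; likewise the total mass of $\sigma_f$ is $\widehat{\sigma_f}(0)=\int f\cdot f\,\mathrm d\mu=\|f\|_2^2$, so $\sigma_f=0$ if and only if $f=0$ $\mu$-a.e. Moreover, because $\widehat{\sigma_A}$ is real and even (one has $\mu(A\cap T^{-n}A)=\mu(A\cap T^nA)$ by invariance of $\mu$), the hypothesis ``$\widehat{\sigma_A}$ vanishes on $D$'' is exactly the statement that $\mu(A\cap T^dA)=0$ for every $d\in D$. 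Thus the first bullet asserts precisely that in an \emph{ergodic} system no set $A$ of positive measure can satisfy $\mu(A\cap T^dA)=0$ for all $d\in D$, i.e.\ it is the recurrence property of $D$ restricted to ergodic systems. This is what makes the result a close analogue of, yet genuinely distinct from, the spectral characterization in Theorem~\ref{spectralchar}.

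Next I would establish the equivalence of the two bullets. The implication from the function statement to the set statement is immediate, taking $f=\mathbf 1_A$, since then $\sigma_f=\sigma_A$. For the converse, suppose the set statement holds and let $f\ge0$ be bounded with $\widehat{\sigma_f}(d)=\int f\cdot f\circ T^d\,\mathrm d\mu=0$ for all $d\in D$. Because $f\ge0$, each such integral being zero forces $f\cdot f\circ T^d=0$ $\mu$-a.e.; writing $A=\{f>0\}$, the event $\{f\cdot f\circ T^d>0\}$ equals $A\cap T^{-d}A$, so this says exactly $\mu(A\cap T^{-d}A)=0$, that is, $\widehat{\sigma_A}$ vanishes on $D$. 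The set statement then gives $\mu(A)=0$, whence $f=0$ $\mu$-a.e.\ and $\sigma_f=0$.

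It remains to match the ergodic set statement with the recurrence property of $D$ for \emph{all} m.p.s. One direction is trivial: if $D$ is a set of recurrence then, in particular in every ergodic system, a set $A$ with $\mu(A)>0$ would produce some $d\in D$ with $\mu(A\cap T^dA)>0$, contradicting $\widehat{\sigma_A}|_D=0$; hence $\mu(A)=0$ and $\sigma_A=0$. The substantive direction is the converse, and here lies the main obstacle: the set statement is assumed only for ergodic systems, whereas recurrence must be verified for arbitrary m.p.s. I would argue by contraposition through the ergodic decomposition. Suppose $D$ is not a set of recurrence, so some m.p.s.\ $(X,\A,\mu,T)$ carries a set $A$ with $\mu(A)>0$ and $\mu(A\cap T^dA)=0$ for all $d\in D$. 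Writing $\mu=\int\mu_\omega\,\mathrm dP(\omega)$ for its ergodic decomposition, for each fixed $d$ the identity $\int\mu_\omega(A\cap T^dA)\,\mathrm dP(\omega)=0$ with nonnegative integrand forces $\mu_\omega(A\cap T^dA)=0$ for $P$-a.e.\ $\omega$. Since $D$ is countable, the (null) exceptional sets can be discarded simultaneously, so for $P$-a.e.\ $\omega$ one has $\mu_\omega(A\cap T^dA)=0$ for every $d\in D$. On the other hand $\int\mu_\omega(A)\,\mathrm dP(\omega)=\mu(A)>0$, so $\mu_\omega(A)>0$ on a set of positive $P$-measure. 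Choosing an $\omega$ in the intersection of these two full-measure/positive-measure events produces an ergodic system $(X,\A,\mu_\omega,T)$ in which $\widehat{\sigma_A}$ vanishes on $D$ yet $\mu_\omega(A)>0$, i.e.\ $\sigma_A\ne0$, contradicting the ergodic set statement. The only points demanding care are the countability of $D$ (to clear exceptional sets for all $d$ at once) and the measurability properties of the ergodic decomposition.
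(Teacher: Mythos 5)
Your proof is correct, but it takes a genuinely different route from the paper's. The paper proves the cycle (recurrence) $\Rightarrow$ (function statement) $\Rightarrow$ (set statement) $\Rightarrow$ (recurrence), and its substantive step --- recurrence implies the function statement --- is dynamical: in an ergodic system with $\widehat{\sigma_f}$ vanishing on $D$, the Birkhoff ergodic theorem gives, for almost every $x$ and every $d\in D$, $\lim_{N\to+\infty}\frac1N\sum_{n=0}^{N-1}f(T^nx)\,f(T^{n-d}x)=0$, and then Theorem~\ref{vdc01} (more precisely the remark following it, about bounded sequences of positive reals), applied to $u_n=f(T^nx)$, yields $\lim_{N\to+\infty}\frac1N\sum_{n=0}^{N-1}f(T^nx)=0$, hence $\int f\,\text{d}\mu=0$ by the ergodic theorem again; so the paper routes the argument through the vdC-type characterization of recurrence, which itself rests on Furstenberg's correspondence principle. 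You bypass all of this with the elementary positivity observation that $\widehat{\sigma_f}(d)=\int f\cdot f\circ T^d\,\text{d}\mu=0$ with $f\geq0$ forces $\mu(A\cap T^{-d}A)=0$ for $A=\{f>0\}$, reducing the function statement to the set statement with no ergodic theorem and no Theorem~\ref{vdc01}. For the remaining direction, (ergodic set statement) $\Rightarrow$ (recurrence), the paper simply \emph{asserts} that a failure of recurrence can be witnessed by an ergodic system, whereas you justify this via ergodic decomposition, correctly using countability of $D$ to discard the exceptional null sets simultaneously. So your argument is more elementary and in fact fills in the one step the paper leaves unproved; what the paper's route buys is the thematic link between recurrence and vdC properties that Section~\ref{vdc-recur} is devoted to. The only caveat --- which you flag yourself --- is that ergodic decomposition requires a standard (Lebesgue) space; for an arbitrary m.p.s. one first passes to the factor generated by the countable family $\{T^nA\,:\,n\in\Z\}$, which admits a standard model, and the paper's unproved assertion needs the same remedy, so this is not a defect of your proof relative to the paper's.
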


 \begin{proof}
Suppose that $D$ is not a set of recurrence. There exists an ergodic m.p.s. $(X,\A,\mu,T)$ and a set $A$ in $\A$, with positive measure such that,
for all
$d\in D$, $\mu\left(A\cap T^{d}A\right)=0$. The spectral measure $\sigma_A$ of the set $A$ satisfies $\widehat\sigma(d)=0$ for all $d\in D$, and $\sigma_A(\{0\})=\mu(A)\neq0.$

Suppose that $D$ is a set of recurrence. Let $\sigma_f$ be the spectral measure of a bounded positive function $f$. Suppose that for all $d\in D$, $\widehat{\sigma_f}(d)=0$. By the ergodic theorem, we have almost surely, for all $d\in D$,
$$
0=\int f\cdot f\circ T^d\,\text{d}\mu=\lim_{N\to+\infty}\frac1N\sum_{n=0}^{N-1}f\circ T^n\cdot f\circ T^{n-d}.
$$
Using Theorem \ref{vdc01} (and more precisely the remark immediately following the theorem), we obtain that, almost surely,
$$
\lim_{N\to+\infty}\frac1N\sum_{n=0}^{N-1}f\circ T^n =0\;.
$$
The ergodic theorem gives $\int f\,\text{d}\mu=0$, hence $\sigma_f=0$.
\end{proof}
\subsection{Enhanced vdC sets and strong recurrence}\label{enhvdc-sr}
The results in this subsection indicate that the link between  enhanced van der
Corput sets and sets of strong recurrence
is parallel to the link between van der Corput sets and sets of
recurrence. However, we don't know if there exists here any example of
Bourgain's type (\cite{JB}). Such an example would give a negative answer to the following question.

\begin{question} (\emph{perhaps very difficult})
Is every set of strong recurrence an FC$^+$ set (or, equivalently, an enhanced
van der Corput set)?
\end{question}

The following question also comes naturally.
\begin{question}
Is there any inclusion between the collection of sets of strong recurrence  and the collection of van der Corput sets ?
\end{question}

The next theorem gives an equivalence between strong recurrence
and \emph{strong intersectivity} (which is defined by (SR2) below).

\begin{theorem}\label{dbl-inters}
Let $D\subset\Z$. There is equivalence between the following assertions.
\begin{enumerate}\item[(SR1)]
      $D$ is a set of strong recurrence.
      \item[(SR2)]
      For any $E\subset\N$ of upper density $\overline{d}(E)>0$,
      there exists
      $\epsilon>0$ and infinitely many $d\in D$ such that
      $$
      \overline{d}\left(E\cap (E+d)\right)>\epsilon\;.
      $$
      \item[(SR3)]
      For any sequence $(u_{n})_{n\in\N}$ of $0$'s and $1$'s such that
$$
\lim_{|d|\to+\infty,d\in D}\,\limsup_{N\to+\infty}
\frac1{N}\sum_{n=1}^{N}u_{n+d}\overline{u_{n}}=0
$$
we have
$$
\lim_{N\to+\infty} \frac1{N}\sum_{n=1}^{N}u_{n}=0\;.
$$
\end{enumerate}
\end{theorem}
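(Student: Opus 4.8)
The plan is to prove the two combinatorial conditions equivalent by a direct dictionary between $0$-$1$ sequences and subsets of $\N$, and to bridge the combinatorial picture with the dynamical condition (SR1) through Furstenberg's correspondence principle, exactly as in the proof of Theorem~\ref{vdc01}. Identifying a $0$-$1$ sequence $(u_n)$ with the set $E=\{n\in\N:u_n=1\}$, the two translations I use repeatedly are that, for every $d$,
$$\limsup_{N\to+\infty}\frac1N\sum_{n=1}^N u_{n+d}u_n=\overline d\big(E\cap(E-d)\big)=\overline d\big(E\cap(E+d)\big),$$
the upper density being insensitive to a fixed shift, and that $\lim_N\frac1N\sum_{n=1}^N u_n=0$ if and only if $\overline d(E)=0$. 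Granting these, (SR2) and (SR3) are the same statement: the hypothesis of (SR3) reads $\overline d(E\cap(E+d))\to0$ as $|d|\to+\infty$ in $D$ and its conclusion reads $\overline d(E)=0$, so by contraposition (SR3) asserts that $\overline d(E)>0$ forces $\limsup_{|d|\to\infty,\,d\in D}\overline d(E\cap(E+d))>0$, i.e.\ the existence of $\epsilon>0$ and infinitely many $d\in D$ with $\overline d(E\cap(E+d))>\epsilon$, which is precisely (SR2). Thus the only genuine work is the equivalence of (SR1) with (SR2).

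For (SR1)$\Rightarrow$(SR2) I manufacture a dynamical system from a given set. Let $E\subset\N$ with $\overline d(E)>0$ and pick $N_i\to\infty$ realizing this upper density. On $X=\{0,1\}^{\Z}$ with the shift $T$, put $x=\mathbf 1_E$ and let $\mu$ be a weak-$*$ limit of $\frac1{N_i}\sum_{n=1}^{N_i}\delta_{T^n x}$; with $A=\{\omega:\omega(0)=1\}$ one gets $\mu(A)=\overline d(E)>0$ and, for every $d$,
$$\mu(A\cap T^dA)=\mu(A\cap T^{-d}A)\le\overline d\big(E\cap(E-d)\big)=\overline d\big(E\cap(E+d)\big).$$
Applying strong recurrence to $(X,\mu,T)$ and $A$ yields $\epsilon>0$ and infinitely many $d\in D$ with $\mu(A\cap T^dA)>\epsilon$, whence $\overline d(E\cap(E+d))>\epsilon$ for these $d$; this is (SR2).

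For (SR2)$\Rightarrow$(SR1) I run the correspondence in the opposite direction, using a generic point. Given a m.p.s.\ and $A$ with $\mu(A)>0$, I would first reduce to the ergodic case and then, for a $\mu$-typical $x$, set $E=\{n:T^nx\in A\}$; the pointwise ergodic theorem gives $\overline d(E)=\mu(A)>0$ and, \emph{simultaneously for all} $d$, $\overline d(E\cap(E+d))=\mu(A\cap T^dA)$. Feeding this $E$ into (SR2) produces $\epsilon>0$ and infinitely many $d\in D$ with $\mu(A\cap T^dA)>\epsilon$, that is $\limsup_{|d|\to\infty,\,d\in D}\mu(A\cap T^dA)>0$, which is (SR1).

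The main obstacle lies in this last implication, precisely in passing from ergodic systems to arbitrary ones. For an ergodic system the conditional correlations $\E[\mathbf 1_A\cdot\mathbf 1_A\circ T^d\mid\I]$ are constant and equal to $\mu(A\cap T^dA)$, so the single set $E$ built from one generic point sees the true correlation sequence and the argument closes. In the non-ergodic case a generic point sees the $\I$-measurable functions $g_d$ with $\int g_d\,d\mu=\mu(A\cap T^dA)$, and the indices $d$ along which $g_d(x)$ is large may depend on $x$, so that pointwise positivity of $\limsup_d g_d(x)$ need not survive integration (a ``typewriter'' phenomenon). One must therefore either reduce strong recurrence to its ergodic components or choose the combinatorial set so that the quantifier ``infinitely many $d$'' of (SR2) is matched uniformly with the single $\limsup$ of (SR1). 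This uniformity is the delicate step, and it is the analogue here of the extra strength—positive upper density of the return set, not mere non-emptiness—that Furstenberg's principle supplies in the proof of Theorem~\ref{vdc01}.
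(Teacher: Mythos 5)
Your handling of (SR2)$\Leftrightarrow$(SR3) and of (SR1)$\Rightarrow$(SR2) is correct: the dictionary between $0$-$1$ sequences and subsets of $\N$, and the weak-$*$ limit construction (which is just the proof of Furstenberg's correspondence principle, cited rather than reproved in the paper), both go through as written. The genuine gap is in (SR2)$\Rightarrow$(SR1), and it is exactly the one you flag without closing: your generic-point argument works only for ergodic systems, and the ``reduction to the ergodic case'' is not a routine step that can be deferred --- it is the whole difficulty. Writing $\mu=\int\mu_\omega\,\text{d}P(\omega)$ for the ergodic decomposition, applying (SR2) component by component gives, for each $\omega$ with $\mu_\omega(A)>0$, some $\epsilon_\omega>0$ and an infinite set $D_\omega\subset D$ along which $\mu_\omega(A\cap T^dA)>\epsilon_\omega$; but the sets $D_\omega$ may escape to infinity at rates depending on $\omega$, and the inequality Fatou provides, $\limsup_{d}\int\mu_\omega(A\cap T^dA)\,\text{d}P(\omega)\leq\int\limsup_{d}\mu_\omega(A\cap T^dA)\,\text{d}P(\omega)$, goes in the wrong direction: integration can kill the $\limsup$. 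This is precisely the ``typewriter'' phenomenon you describe, so as it stands your argument proves (SR2)$\Rightarrow$(SR1) only for ergodic systems, which is strictly weaker than (SR1).

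The paper closes this gap with a device (communicated by Anthony Quas) that bypasses ergodicity altogether. Instead of one generic point, take i.i.d.\ random points $x_1,x_2,x_3,\ldots$ in $X$ with law $\mu$ and concatenate orbit segments of growing length,
$$
(y_{n}):=(x_{1},\;x_{2},Tx_{2},\;x_{3},Tx_{3},T^{2}x_{3},\;x_{4},\ldots,T^{3}x_{4},\;x_{5},\ldots,T^{4}x_{5},\ldots)\;,
$$
and set $E=\{n:y_n\in A\}$. A law of large numbers for uncorrelated random variables, applied to the block variables $Y_k:=\bigl(\frac1k\sum_{j=0}^{k-1}\mathbf{1}_A(T^jx_k)\bigr)-\mu(A)$, shows that almost surely $\lim_N\frac1N\sum_{n=1}^{N}\mathbf{1}_E(n)=\mu(A)$, and the block structure gives likewise $\lim_N\frac1N\sum_{n=1}^{N}\mathbf{1}_E(n)\mathbf{1}_E(n+d)=\mu(A\cap T^{-d}A)$ for every $d$ simultaneously. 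Here the independence of the $x_k$ supplies the spatial averaging that the pointwise ergodic theorem supplies only in the ergodic case: the correlations of the single random set $E$ equal the true correlations $\mu(A\cap T^{-d}A)$ with no ergodicity hypothesis. Feeding this $E$ into (SR2) then yields a uniform $\epsilon>0$ and infinitely many $d\in D$ with $\mu(A\cap T^{-d}A)>\epsilon$, which is (SR1). This probabilistic block construction is the missing idea in your proposal.
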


\begin{proof}[Proof of Theorem \ref{dbl-inters}]

It is clear that properties (SR2) and (SR3) are the same.
The fact that (SR1)$\Rightarrow $(SR2) follows directly from Furstenberg's
correspondence principle.
The following proof of (SR2)$\Rightarrow $(SR1) has been communicated to
us by Anthony Quas .
Let $(X,\A,\mu,T)$ be a m.p.s. and $A\in\A$, with
$\mu(A)>0$. Let $(x_{n})_{n\geq1}$ be a sequence of random points in $X$ chosen
independently and
with the law $\mu$. We consider a new sequence in $X$ defined by
$$
(y_{n}):=(x_{1},x_{2},Tx_{2},x_{3},Tx_{3},T^{2}x_{3},
x_{4},\ldots,T^3x_{4},x_{5},\ldots,T^{4}x_{5},\ldots)\;,
$$
and the random set $E$ of numbers $n$ such that $y_{n}\in A$.
We claim that, almost surely,
\begin{equation}\label{E-dens}
\lim_{N\to+\infty}\frac1N\sum_{n=1}^{N}{\bf1}_{E}(n)=\mu(A)\;.
\end{equation}
This claim can be justified by the following law of large numbers,
applied to the mutually independent random variables
$$
Y_{k}:=\left(\frac1k\sum_{j=0}^{k-1}{\bf1}_{A}(T^jx_{k})\right)
-\mu(A)\;.$$
\begin{lemma}[law of large numbers]
      Let $(Y_{k})$ be a sequence of random variables
      such that $\sup_{k}\E(Y_{k}^{2})<+\infty$, $\E(Y_{k})=0$, and
      $\E(Y_{k}Y_{\ell})=0$ if $k\neq\ell$. Almost surely we have
      \begin{equation}\label{orth-rv}
      \lim_{n\to+\infty}\frac1{n^{2}}\sum_{k=1}^n kY_{k}=0\;.
      \end{equation}
\end{lemma}
(The convergence (\ref{orth-rv}) is a direct consequence of some easy $L^2$ estimates. It can be also deduced from the convergence of ordinary Ces\`aro averages. We omit the proof.)

Using the block structure of the sequence $(y_{n})$ a
similar argument gives (almost surely)
\begin{equation}\label{EetE+d-dens}
\lim_{N\to+\infty}\frac1N\sum_{n=1}^{N}{\bf1}_{E}(n)
{\bf1}_{E}(n+d)=\mu(A\cap T^{-d}A)\;.
\end{equation}

Assume now that condition (SR2) is satisfied. From (\ref{E-dens}) we
deduce that $\overline{d}(E)>0$, hence there exists
      $\epsilon>0$ and infinitely many $d\in D$ such that
      $$
      \overline{d}\left(E\cap (E+d)\right)>\epsilon\;,
      $$
which means (by (\ref{EetE+d-dens})) that $\mu(A\cap T^{-d}A)>\epsilon$.
\end{proof}

\begin{proposition}\label{e-vdc-a-rec}
      Any  enhanced vdC set is a set of strong recurrence.
      \end{proposition}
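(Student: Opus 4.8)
The plan is to route the argument through the spectral characterization, exploiting that by Theorem~\ref{enhanced-sp.char} the enhanced vdC property coincides with the FC$^+$ property of Definition~\ref{fcplus-def}. Fix a m.p.s. $(X,\A,\mu,T)$ and a set $A\in\A$ with $\mu(A)>0$, and form its spectral measure $\sigma_A$, characterized by $\widehat{\sigma_A}(n)=\mu(A\cap T^{-n}A)$ for all $n\in\Z$. The whole proof rests on two elementary features of $\sigma_A$: first, its Fourier coefficients $\widehat{\sigma_A}(d)=\mu(A\cap T^{-d}A)$ are \emph{nonnegative real numbers}; second, $\sigma_A$ carries positive mass at the origin.

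For the second point I would compute $\sigma_A(\{0\})=\lim_{N\to+\infty}\frac1N\sum_{n=0}^{N-1}\widehat{\sigma_A}(n)$ (the usual recovery of the atom at $0$ from the Ces\`aro mean of the Fourier coefficients, since $\frac1N\sum_{n=0}^{N-1}e(nx)\to\mathbf 1_{\{0\}}(x)$ boundedly), and identify this limit, via the mean ergodic theorem, with $\|P\mathbf 1_A\|_{L^2}^2$, where $P$ denotes the orthogonal projection onto $T$-invariant functions. Since $\mathbf 1$ is invariant, Cauchy--Schwarz gives $\|P\mathbf 1_A\|_{L^2}^2\geq\langle P\mathbf 1_A,\mathbf 1\rangle^2=\mu(A)^2>0$, so $\sigma_A$ is not continuous. (This is the point at which no ergodicity hypothesis on the system is needed.)

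Now I would apply FC$^+$ in contrapositive form: since $\sigma_A$ is a positive, non-continuous measure, it cannot be that $\lim_{|d|\to+\infty,\,d\in D}\widehat{\sigma_A}(d)=0$. Hence $\limsup_{|d|\to+\infty,\,d\in D}\bigl|\widehat{\sigma_A}(d)\bigr|>0$, and because the coefficients are nonnegative reals this limsup equals $\limsup_{|d|\to+\infty,\,d\in D}\widehat{\sigma_A}(d)$. Finally, invertibility together with measure preservation gives $\mu(A\cap T^{-d}A)=\mu(A\cap T^{d}A)$, so the quantity above is exactly $\limsup_{|d|\to+\infty,\,d\in D}\mu(A\cap T^{d}A)>0$, which is precisely the conclusion of Definition~\ref{rec-def}.

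The step I expect to be the genuine crux is the nonnegativity of $\widehat{\sigma_A}$: it is exactly this feature (the analogue, in the world of sets, of the passage from the full vdC property to mere recurrence in Theorem~\ref{vdc01} and Proposition~\ref{spectral01}) that upgrades the qualitative statement ``the Fourier coefficients do not tend to zero'' into the quantitative conclusion ``the return measures have positive limsup.'' An alternative would be to verify condition (SR3) of Theorem~\ref{dbl-inters} directly; but since the enhanced vdC hypothesis is phrased for modulus-one sequences whereas (SR3) concerns $0$--$1$ sequences, that route would first require the bounded-sequence reformulation of the enhanced vdC property, and is less transparent than the spectral argument sketched above.
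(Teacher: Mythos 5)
Your proof is correct and takes essentially the same route as the paper: the paper likewise forms the spectral measure of $A$ (with $\widehat{\sigma}(n)=\mu(A\cap T^{n}A)$), observes that it has an atom at $0$ of mass at least $\mu(A)^{2}$, and applies the FC$^{+}$ property (via Theorem \ref{enhanced-sp.char}) in contrapositive, the nonnegativity of the coefficients $\mu(A\cap T^{d}A)$ turning the nonvanishing limsup of moduli into the strong recurrence conclusion. The only difference is that you spell out the atom estimate (Wiener-type Ces\`aro argument, mean ergodic theorem, Cauchy--Schwarz), which the paper simply asserts.
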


      \begin{proof}
	Let $D\subset\Z$ be an enhanced vdC set, let $(X,\A,\mu,T)$ be a m.p.s. and $A\in\A$, with
	$\mu(A)>0$. There exists a positive measure $\sigma$ on the torus
	such that, for all $n\in\Z$,
	$$
	\widehat{\sigma}(n)=\mu(A\cap T^{n}A)\;.
	$$
	This measure has a point mass at zero : $\sigma(\{0\})\geq\mu(A)^{2}$.
	Since the
	set $D$ is FC$^{+}$, this implies that there exists $\epsilon>0$
	such that $\widehat{\sigma}(d)>\epsilon$ for infinitely many $d\in D$.
\end{proof}

\subsection{Density notions of vdC sets and sets of recurrence}

A new natural notion of vdC-type set, which we will call \emph{density vdC} can be 
obtained by replacing in Definition \ref{enhanced-def} the convergence of $\gamma$ to 
zero along the set $D$ by the convergence of $\gamma$ to zero along a subset of $D$ 
which has full density in $D$. We will associate to it a notion of 
\emph{density FC$^+$ set}. These notions are related to \emph{averaging sets of recurrence}, 
as we will see below. Here are the formal definitions.

If $D$ is an infinite set of integers, we will write $D=\{d_m\,:\,m\in\N\}$ with the convention that the numbers $d_m$ are pairwise distinct and the sequence $(|d_m|)$ is nondecreasing. Let us recall that for any bounded sequence $ \left(v(d_m)\right)_{m\in\N}$ of positive numbers the two following properties are equivalent:\begin{itemize}\item$\displaystyle \lim_{M\to+\infty}\frac1M\sum_{m=1}^Mv(d_m)=0$,\item There exists $D'\subset D$ such that
$$
\lim_{M\to+\infty}\frac{\#D'\cap[-M,M]}{\#D\cap[-M,M]}=1\qquad\text{and}\qquad \lim_{m\to+\infty, d_m\in D'}v(d_m)=0.
$$
\end{itemize}
\begin{definition}
An infinite set of integers $D$ is a density vdC set if for any sequence $(u_{n})_{n\in\Z}$ of complex numbers
of modulus 1 such that
$$
\lim_{M\to+\infty}\frac1M\sum_{m=1}^M\limsup_{N\to+\infty}
\left|\frac1{N}\sum_{n=0}^{N-1}u_{n+d_m}\overline{u_{n}}\right|=0\;,
$$
one has
$$
\lim_{N\to+\infty} \frac1{N}\sum_{n=0}^{N-1}u_{n}=0\;.
$$
\end{definition}

(Compare this definition with Proposition \ref{def2dvdc}.)

\begin{definition}
An infinite set of integers $D$ is a density FC$^{+}$ set if every positive
measure $\sigma$ on the torus $\T$ such that
$\displaystyle
\ \lim_{M\to+\infty}\frac1M\sum_{m=1}^M\widehat\sigma(d_m)=0\
$ is continuous.
\end{definition}
(Compare with Definition \ref{fcplus-def}. Any density FC$^+$ set is a FC$^+$ set.)

\begin{definition}
      An infinite set of integers $D$ is an \emph{averaging set of
      recurrence} if for any m.p.s. $(X,\A,\mu,T)$ and $A\in\A$, with $\mu(A)>0$,
      $$
      \limsup_{M\to+\infty} \frac1M\sum_{m=1}^M\mu\left(A\cap
      T^{-d_{m}}A\right)>0\;.
      $$
      \end{definition}
Note that this definition differs slightly from the one given in \cite{VB-IH} where the limsup is replaced by a lim. 

Any averaging set of recurrence is a set of srong recurrence.

\begin{theorem}\label{spec-carac-dens}
The notions of a density vdC set and of a density FC$^+$ sets coincide.
\end{theorem}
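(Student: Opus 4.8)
The plan is to follow the proof of Theorem~\ref{enhanced-sp.char} (enhanced vdC $=$ FC$^{+}$), replacing each ``$\lim_{|d|\to+\infty}$'' condition by its Cesàro-averaged analogue and using repeatedly the elementary equivalence recalled at the beginning of this subsection: for a bounded sequence $(v(d_m))_m$ of nonnegative reals, $\frac1M\sum_{m=1}^M v(d_m)\to 0$ holds if and only if $v(d_m)\to 0$ along a subset $D'\subset D$ of full density in $D$. This device is what lets the averaged hypothesis on correlations and the averaged hypothesis on Fourier coefficients be matched, and it will be applied both to $v(d_m)=\limsup_N\left|\frac1N\sum_n u_{n+d_m}\overline{u_n}\right|$ and to $v(d_m)=\left|\widehat\sigma(d_m)\right|$.

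First I would show that a density FC$^{+}$ set is a density vdC set. Given a modulus-one family $(u_n)$ satisfying the averaged correlation hypothesis, I extract an increasing sequence $(N_j)$ along which $\frac1{N_j}\left|\sum_n u_n\right|\to\limsup_N\frac1N\left|\sum_n u_n\right|$ and along which every correlation $\gamma(h)=\lim_j\frac1{N_j}\sum_n u_{n+h}\overline{u_n}$ exists. By Lemma~\ref{affinite} there is a positive measure $\sigma$ with $\widehat\sigma=\gamma$ and $\limsup_N\frac1N\left|\sum_n u_n\right|\le\sqrt{\sigma(\{0\})}$. Since $\left|\widehat\sigma(d_m)\right|=\left|\gamma(d_m)\right|\le\limsup_N\left|\frac1N\sum_n u_{n+d_m}\overline{u_n}\right|$, the averaged hypothesis forces $\frac1M\sum_{m=1}^M\left|\widehat\sigma(d_m)\right|\to0$; the density FC$^{+}$ property then makes $\sigma$ continuous, so $\sigma(\{0\})=0$ and $(u_n)$ is Cesàro null.

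Conversely, to show a density vdC set is density FC$^{+}$, I would run Ruzsa's probabilistic construction. Taking $\sigma$ to be a probability measure whose Fourier coefficients are small on $D$ in the density sense, Lemmas~\ref{lemmeprob.1.1} and~\ref{lemmeprob.2.1} produce a modulus-one family $(Y_n)$ for which, almost surely, $\frac1N\sum_n Y_n\to\sigma(\{0\})$ and, for each fixed $h$, $\frac1N\sum_n Y_{n+h}\overline{Y_n}\to\widehat\sigma(h)$. Intersecting the countably many almost-sure events (one per $h\in\Z$) gives a single realization on which $\limsup_N\left|\frac1N\sum_n Y_{n+d_m}\overline{Y_n}\right|=\left|\widehat\sigma(d_m)\right|$ for all $m$, so that the density smallness of $(\widehat\sigma(d_m))$, reformulated via the equivalence above as $\frac1M\sum_{m=1}^M\left|\widehat\sigma(d_m)\right|\to0$, is exactly the hypothesis of the density vdC property; applying it yields $\sigma(\{0\})=0$. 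Finally, since the moduli $\left|\widehat{\sigma_t}(d_m)\right|=\left|\widehat\sigma(d_m)\right|$ are unchanged when $\sigma$ is translated by $t\in\T$, the same argument applied to every translate $\sigma_t$ (whose mass at $0$ equals $\sigma(\{t\})$) shows that $\sigma$ has no atom at all, i.e.\ $\sigma$ is continuous.

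The main obstacle is the bookkeeping linking the two averaged conditions. In Theorem~\ref{enhanced-sp.char} the conditions ``$\widehat\sigma(d)\to0$'' and ``$\left|\widehat\sigma(d)\right|\to0$'' are literally the same, but in the density setting the Cesàro average of the $\widehat\sigma(d_m)$ and that of the $\left|\widehat\sigma(d_m)\right|$ must be handled with care, and it is the averaged modulus that pairs correctly with the $\limsup_N$ appearing in the definition of a density vdC set. This is precisely where the full-density-subset reformulation does the work: each averaged smallness condition is recast as convergence to $0$ of a nonnegative quantity along a full-density subset of $D$, after which moduli may be inserted freely and the translation-invariance used above is legitimate; the affinity half and the probabilistic half then glue together exactly as in the enhanced case. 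Everything else is a routine adaptation of the arguments of Subsection~\ref{spectral-char2} and of the proof of Theorem~\ref{enhanced-sp.char}.
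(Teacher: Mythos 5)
Your proof is correct and is exactly the argument the paper has in mind: the paper omits the proof of Theorem \ref{spec-carac-dens}, saying only that it is similar to that of Theorem \ref{enhanced-sp.char}, and your two halves (diagonal extraction plus Lemma \ref{affinite} for the implication from density FC$^+$ to density vdC, and Ruzsa's probabilistic construction from Lemmas \ref{lemmeprob.1.1} and \ref{lemmeprob.2.1} followed by the translation argument for the converse) are precisely that adaptation. Your insistence on reading the density FC$^+$ hypothesis as $\lim_{M\to+\infty}\frac1M\sum_{m=1}^M\left|\widehat\sigma(d_m)\right|=0$ (equivalently, $\widehat\sigma(d_m)\to0$ along a full-density subset of $D$) rather than as the unmodulated Ces\`aro limit printed in the definition is not only legitimate but necessary: the unmodulated condition is not translation-invariant, and with it the statement would actually fail (for $D=\N$, which is density vdC, the measure $\delta_x$ with $x$ irrational satisfies $\frac1M\sum_{m=1}^M\widehat{\delta_x}(m)\to0$ yet is not continuous).
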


The proof of this theorem is similar to the proof of Theorem \ref{enhanced-sp.char} and is omitted.\medbreak

From Theorem \ref{spec-carac-dens} one can deduce for example that the class of density vdC sets has 
the Ramsey property.\medbreak

Of course every density vdC set is an enhanced vdC set. We do not know whether the 
reverse implication holds.

\begin{question}
Do the notions of density vdC set and enhanced vdC set coincide ?
\end{question}\medbreak

Questions \ref{q2}, \ref{q3} and \ref{q4} that we asked about enhanced vdC sets 
have obvious density vdC sets analogues.

Note also that the examples described in Subsection \ref{ex-enh-vdc} can also be 
utilized to illustrate the notion of density vdC set. In particular we have:
\begin{itemize}
\item
If $(d_n)$ is an increasing ergodic sequence of integers, then the set $\{d_n\}$ is a 
density vdC set.  This leads to the examples presented in Subsection~\ref{ergseq}.
\item
If an increasing sequence of integers $(d_n)$ satisfies hypotheses (i) and (ii) of 
Proposition \ref{source}, then the set $\{d_n\}$ is a density vdC set.  This leads 
to the ``polynomial examples" presented in Subsection~\ref{polex}.
\end{itemize}

The following proposition establishes a link with recurrence.
\begin{proposition}
Any density vdC set is an averaging set of recurrence.
\end{proposition}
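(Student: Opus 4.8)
The plan is to reduce the statement to the spectral characterization of density vdC sets (Theorem~\ref{spec-carac-dens}) and then to mimic the argument that proves Proposition~\ref{e-vdc-a-rec}. By Theorem~\ref{spec-carac-dens} a density vdC set $D$ is the same thing as a density FC$^+$ set, so it suffices to prove that any density FC$^+$ set is an averaging set of recurrence.

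First I would fix a m.p.s. $(X,\A,\mu,T)$ and a set $A\in\A$ with $\mu(A)>0$, and introduce the spectral measure $\sigma=\sigma_A$, that is, the positive measure on $\T$ whose Fourier coefficients are $\widehat\sigma(n)=\mu(A\cap T^{-n}A)$; its existence follows, as noted earlier, from the positive-definiteness of the sequence $\left(\mu(A\cap T^nA)\right)_n$ together with the Bochner-Herglotz theorem. The key observation, exactly as in the proof of Proposition~\ref{e-vdc-a-rec}, is that $\sigma$ carries a point mass at the origin: by the mean ergodic theorem,
$$
\sigma(\{0\})=\lim_{N\to+\infty}\frac1N\sum_{n=0}^{N-1}\mu(A\cap T^{-n}A)=\|P{\bf 1}_A\|_2^2\geq\mu(A)^2>0\;,
$$
where $P$ denotes the orthogonal projection onto the space of $T$-invariant functions (the last inequality is Cauchy-Schwarz, since the constants are invariant). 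In particular $\sigma$ is not continuous.

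Now I would invoke the density FC$^+$ property in contrapositive form. Writing $D=\{d_m\,:\,m\in\N\}$ with $(|d_m|)$ nondecreasing, the fact that $\sigma$ is \emph{not} continuous means that $\sigma$ cannot satisfy the hypothesis in the definition of a density FC$^+$ set, so we must have
$$
\lim_{M\to+\infty}\frac1M\sum_{m=1}^M\widehat\sigma(d_m)\neq 0\;.
$$
The only point requiring a small remark is that the terms $\widehat\sigma(d_m)=\mu(A\cap T^{-d_m}A)$ are nonnegative real numbers, so the averages $\frac1M\sum_{m=1}^M\mu(A\cap T^{-d_m}A)$ form a bounded nonnegative sequence; for such a sequence the limit vanishes if and only if the limsup vanishes. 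Hence the displayed non-vanishing forces
$$
\limsup_{M\to+\infty}\frac1M\sum_{m=1}^M\mu(A\cap T^{-d_m}A)>0\;,
$$
which is precisely the defining property of an averaging set of recurrence. There is no genuine obstacle here: the entire substance is carried by Theorem~\ref{spec-carac-dens}, and the only things to be careful about are matching the enumeration $(d_m)$ used in both definitions and the elementary nonnegativity remark that converts ``limit $\neq 0$'' into ``limsup $>0$''.
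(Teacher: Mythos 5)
Your proof is correct and follows essentially the route the paper intends: the paper omits the proof, noting only that it is ``similar to the proof of Proposition \ref{e-vdc-a-rec}'', which is exactly your argument --- pass to the density FC$^+$ characterization via Theorem \ref{spec-carac-dens}, take the spectral measure $\sigma_A$ with its point mass $\sigma_A(\{0\})\geq\mu(A)^2$ at the origin, and apply the contrapositive, using nonnegativity of the coefficients $\widehat{\sigma_A}(d_m)$ to convert the failure of ``$\lim=0$'' into ``$\limsup>0$''. The only cosmetic remark is that the identification $\sigma_A(\{0\})=\lim_N\frac1N\sum_{n=0}^{N-1}\widehat{\sigma_A}(n)$ is a (one-sided) Wiener-lemma fact rather than part of the mean ergodic theorem itself, but every step in your displayed chain is valid.
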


The proof of this proposition is similar to the proof of Proposition \ref{e-vdc-a-rec}, and is omitted.\\

\subsection{Nice vdC sets and nice recurrence}

Another natural notion of recurrence is that of nice recurrence.

\begin{definition}
      A set $D$ of integers is a \emph{set of nice recurrence} if given
      any m.p.s. $(X,\A,\mu,T)$ and $A\in\A$, with $\mu(A)>0$, given any
      $\epsilon>0$, we have
      $$
      \mu\left(A\cap T^{-d}A\right)\geq\mu(A)^{2}-\epsilon\;,
      $$
      for infinitely many $d\in D$.
\end{definition}

The following proposition provides an equivalent definition for sets of nice recurrence.
\begin{proposition}\label{alt-def}
      A set $D$ of integers is a set of nice recurrence if and only if the following is true:
      
      {\rm(C)} given
      any m.p.s. $(X,\A,\mu,T)$ and $A\in\A$, with $\mu(A)>0$, given any
      $\epsilon>0$, there exists $d\in D,\,d\neq0$ such that
      $
      \mu\left(A\cap T^{-d}A\right)\geq\mu(A)^{2}-\epsilon\;.
      $
\end{proposition}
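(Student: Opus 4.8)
The implication from nice recurrence to (C) is immediate: if, for a fixed system, set $A$, and $\epsilon$, the inequality $\mu(A\cap T^{-d}A)\ge\mu(A)^2-\epsilon$ holds for infinitely many $d\in D$, then it holds in particular for at least one $d\neq0$, which is exactly (C). All the content lies in the converse, which I would establish by contradiction. Suppose (C) holds but $D$ is not a set of nice recurrence; fix an m.p.s. $(X,\A,\mu,T)$, a set $A$ with $\mu(A)>0$ and an $\epsilon_0>0$ for which $S:=\{d\in D,\ d\neq0:\mu(A\cap T^{-d}A)\ge\mu(A)^2-\epsilon_0\}$ is finite. We may assume $\epsilon_0<\mu(A)^2$ (otherwise every $d$ satisfies the inequality and nice recurrence is automatic), so that $\mu(A\cap T^{-d}A)<\mu(A)^2-\epsilon_0$ for all $d\in D\setminus(S\cup\{0\})$.

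The goal is to manufacture an auxiliary system in which \emph{no} nonzero $d\in D$ satisfies the (C)-inequality. I would combine two devices. The first is a high Cartesian power $(X^k,\mu^{\otimes k},T^{\times k})$ together with $A^{\times k}$, for which $\mu^{\otimes k}\bigl(A^{\times k}\cap(T^{\times k})^{-d}A^{\times k}\bigr)=\mu(A\cap T^{-d}A)^k$. Raising to the $k$th power suppresses every non-exceptional return: for $d\notin S$ this quantity is at most $(\mu(A)^2-\epsilon_0)^k$, which is negligible next to the expected value $\mu(A)^{2k}$, since $(\mu(A)^2-\epsilon_0)^k/\mu(A)^{2k}=(1-\epsilon_0/\mu(A)^2)^k\to0$. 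The obstruction is that the finitely many exceptional indices $d\in S$, where $\mu(A\cap T^{-d}A)$ may be close to $\mu(A)$, survive and would keep verifying (C) in the power system. To annihilate precisely these indices I would adjoin a second factor: the finite cyclic rotation $R:j\mapsto j+1$ on $\Z/N\Z$ with its uniform measure $\nu$, together with the singleton $B=\{0\}$, for which $\nu(B\cap R^{-d}B)=1/N$ when $N$ divides $d$ and $0$ otherwise. Choosing $N$ larger than every $|d|$ with $d\in S$ forces $\nu(B\cap R^{-d}B)=0$ for all $d\in S$.

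In the product system $Y=X^k\times\Z/N\Z$ with $S_Y=T^{\times k}\times R$, measure $\mu_Y=\mu^{\otimes k}\times\nu$, and set $E=A^{\times k}\times\{0\}$, one has $\mu_Y(E\cap S_Y^{-d}E)=\mu(A\cap T^{-d}A)^k\,\nu(B\cap R^{-d}B)$. For every nonzero $d\in D$ this vanishes unless $N$ divides $d$; and if $N$ divides $d$ then $|d|\ge N>\max_{d\in S}|d|$, so $d\notin S$ and the quantity is at most $(\mu(A)^2-\epsilon_0)^k/N$. Since $\mu_Y(E)^2=\mu(A)^{2k}/N^2$, to contradict (C) it suffices to arrange $(\mu(A)^2-\epsilon_0)^k/N<\mu(A)^{2k}/N^2$, that is $N<(1-\epsilon_0/\mu(A)^2)^{-k}$, and then to take $\epsilon>0$ below the resulting gap. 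The order of the quantifiers is the crux: $N$ is chosen first, depending only on the finite set $S$, and only afterwards is $k$ taken large enough that $(1-\epsilon_0/\mu(A)^2)^{-k}>N$, which is possible exactly because $0<\epsilon_0/\mu(A)^2<1$. The main obstacle is thus conceptual rather than computational: a plain tensor-power argument fails on the exceptional set $S$, and the essential point is that one must first spend a finite rotation to exterminate $S$ and then pick the power $k$ large relative to that rotation, so the two parameters are forced into this order.
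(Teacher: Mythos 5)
Your proof is correct --- the construction works, and you have the quantifier order ($N$ chosen before $k$) exactly right --- but it takes a genuinely different route from the paper's. The paper proves the hard implication directly, not by contradiction: given $(X,\A,\mu,T)$, $A$, $\epsilon$ and an integer $k$, it forms the product of $T$ with a two-letter Bernoulli shift $(Y,\B,\nu,S)$ and takes for $B$ the cylinder of sequences beginning with a $1$ followed by $k$ zeros, for which $\nu(B\cap S^{-d}B)=0$ when $0<|d|\le k$ while $\nu(B\cap S^{-d}B)=\nu(B)^{2}$ \emph{exactly} when $|d|>k$. Applying (C) to $A\times B$ with tolerance $\epsilon\nu(B)^{2}$ forces the resulting $d$ to satisfy $|d|>k$ (otherwise the left side vanishes while the right side is positive), and since the Bernoulli factor is exactly neutral at such lags, dividing by $\nu(B)^{2}$ returns the clean inequality $\mu(A\cap T^{-d}A)\ge\mu(A)^{2}-\epsilon$ with $|d|>k$; letting $k$ run through $\N$ gives infinitely many such $d$. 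Your argument needs two auxiliary devices (the $k$-th tensor power plus the cyclic rotation) and the contradiction format precisely because your rotation factor is \emph{not} neutral: when $N\mid d$ it contributes $1/N$ to the correlation but $1/N^{2}$ to $\mu_Y(E)^{2}$, and that loss of a factor of $N$ is what the tensor power must beat --- which you correctly identify as the crux. As for what each approach buys: your amplification argument uses only finite rotations and powers of the given system, avoiding Bernoulli shifts altogether; the paper's single-factor argument is shorter, direct, and its device is recycled immediately afterwards, since the spectral measure of ${\bf1}_B$ is exactly the measure $\rho$ with $\widehat\rho(n)=0$ for $|n|\le k$ and $\widehat\rho(n)=\rho(\{0\})$ for $|n|>k$ used in the paper's proof of the companion statement for nice FC$^{+}$ sets (with convolution playing the role of the Cartesian product); your trick would also transfer there, via convolution powers $\sigma^{\star k}$, but less immediately. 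One small wording issue: your parenthetical justifying $\epsilon_0<\mu(A)^{2}$ is loose --- the clean reason is that shrinking $\epsilon_0$ only shrinks the exceptional set $S$ (and, when $D$ is infinite, a witness with $\epsilon_0\ge\mu(A)^{2}$ would force $S=D\setminus\{0\}$ to be infinite), so a witness with $\epsilon_0<\mu(A)^{2}$ always exists; this is a one-line repair, not a gap.
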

\begin{proof}
We have to prove that the integer $d$ appearing in Condition (C)
can be chosen arbitrarily large. We suppose that Condition (C) is
satisfied. We consider a m.p.s. $(X,\A,\mu,T)$ and a set $A\in\A$, with $\mu(A)>0$. Denote by
$(Y,\B,\nu,S)$ a Bernoulli scheme on two letters ($Y$ is the set of
sequences of 0's and 1's, $\nu$ is a non trivial product measure,
and $S$ is the shift). Let $k$ be a positive integer and $B$ be the
cylinder set in $Y$ of all sequences beginning by a 1 followed by $k$
0's. We have $\nu(B)>0$, $\nu(B\cap S^{-d}B)=0$ if $|d|\leq k$, and
$\nu(B\cap S^{-d}B)=\nu(B)^{2}$ if $|d|> k$. Applying the hypothesis
to the product $T\times S$ of the two dynamical systems, we affirm
that there exists $d\in D$ such that
$$
\mu\otimes\nu\big((A\times B)\cap (T\times S)^{-d}(A\times B)\big)
\geq\big(\mu\otimes\nu(A\times B)\big)^{2}-\epsilon\nu(B)^{2}\;,
$$
hence there exists $d\in D$, $|d|>k$, such that
$$
      \mu\left(A\cap T^{-d}A\right)\geq\mu(A)^{2}-\epsilon\;.
      $$
      \end{proof}

The notion of sets of nice recurrence
seems to be naturally related to the following definitions.

\begin{definition}
      An infinite set $D$ of integers is a \emph{nice vdC set} if,
      for any
      sequence $(u_{n})_{n\in\N}$ of complex numbers of modulus one,
      $$
      \limsup_{N\to+\infty}\left|\frac1N\sum_{n=1}^{N}u_{n}\right|^{2} \leq
      \limsup_{|d|\to+\infty,\,d\in
      D}\limsup_{N\to+\infty}\left|\frac1N\sum_{n=1}^{N}u_{n+d}
      \overline{u_{n}}\right|\;.
      $$
\end{definition}
\begin{definition}
      A infinite set $D$ of integers is a \emph{nice FC$^{+}$ set} if,
      for any positive measure $\sigma$ on the torus,
      $$
      \sigma(\{0\})\leq\limsup_{|d|\to+\infty,\,d\in
      D}|\widehat{\sigma}(d)|\;.
      $$
\end{definition}

The following proposition is similar in spirit to Proposition \ref{alt-def}.

\begin{proposition}      A set $D$ of integers is a nice FC$^{+}$ set if and only if the following is true:
      
      {\rm (C')} for any positive measure $\sigma$ on the torus and any
      $\epsilon>0$, there exists $d\in D,\,d\neq0$ such that
      $
      |\widehat{\sigma}(d)|>\sigma(\{0\})-\epsilon\;.
      $
\end{proposition}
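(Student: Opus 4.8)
The plan is to prove both implications, with essentially all the work in (C')$\,\Rightarrow\,$nice FC$^{+}$; the reverse implication is immediate. Indeed, if $D$ is nice FC$^{+}$ then for a given positive measure $\sigma$ the inequality $\sigma(\{0\})\le\limsup_{|d|\to+\infty,\,d\in D}|\widehat\sigma(d)|$ means that for every $\epsilon>0$ there are infinitely many $d\in D$ (with $|d|\to+\infty$) satisfying $|\widehat\sigma(d)|>\sigma(\{0\})-\epsilon$; since $D$ is infinite, at least one such $d$ is nonzero, which is exactly (C'). The content of the proposition is therefore that the single nonzero $d$ furnished by (C') can always be replaced by arbitrarily large ones --- the precise analogue, on the spectral side, of Proposition \ref{alt-def}.

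For the forward direction I would imitate the Bernoulli-scheme trick of Proposition \ref{alt-def}, transported to measures via convolution (recall that convolution of spectral measures corresponds to the product of dynamical systems, since $\widehat{\sigma\star\tau}=\widehat\sigma\cdot\widehat\tau$). Fix a positive measure $\sigma$, a real $\epsilon$ with $0<\epsilon<\sigma(\{0\})$ (the case $\sigma(\{0\})=0$ being trivial), and an integer $k$; the goal is to produce $d\in D$ with $|d|>k$ and $|\widehat\sigma(d)|>\sigma(\{0\})-\epsilon$. Let $B$ be the cylinder in a Bernoulli scheme $(Y,\B,\nu,S)$ consisting of the sequences that begin with a $1$ followed by $k$ zeros, and let $\tau:=\sigma_B$ be its spectral measure, whose existence is guaranteed by the Bochner--Herglotz theorem since $d\mapsto\nu(B\cap S^{-d}B)$ is positive-definite. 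As in the proof of Proposition \ref{alt-def} one has $\widehat\tau(d)=\nu(B\cap S^{-d}B)=0$ for $0<|d|\le k$ and $\widehat\tau(d)=\nu(B)^2$ for $|d|>k$, while $\tau(\{0\})=\nu(B)^2$.

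Now set $\rho:=\sigma\star\tau$, so that $\widehat\rho(d)=\widehat\sigma(d)\,\widehat\tau(d)$. Then $\widehat\rho(d)=0$ for $0<|d|\le k$, $\widehat\rho(d)=\nu(B)^2\,\widehat\sigma(d)$ for $|d|>k$, and, by the elementary inequality on the atom of a convolution already used in Corollary \ref{Ramsey}, $\rho(\{0\})\ge\sigma(\{0\})\,\nu(B)^2$. Applying (C') to the positive measure $\rho$ with the rescaled threshold $\epsilon':=\epsilon\,\nu(B)^2$ yields a nonzero $d\in D$ with
$$
|\widehat\rho(d)|>\rho(\{0\})-\epsilon'\ge\bigl(\sigma(\{0\})-\epsilon\bigr)\nu(B)^2>0 .
$$
Since $\widehat\rho$ vanishes on $\{d:0<|d|\le k\}$, the chosen $d$ must satisfy $|d|>k$, and there $|\widehat\rho(d)|=\nu(B)^2|\widehat\sigma(d)|$, so dividing by $\nu(B)^2$ gives $|\widehat\sigma(d)|>\sigma(\{0\})-\epsilon$. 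As $k$ was arbitrary this shows $\limsup_{|d|\to+\infty,\,d\in D}|\widehat\sigma(d)|\ge\sigma(\{0\})-\epsilon$, and letting $\epsilon\to0$ gives the nice FC$^{+}$ inequality.

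The one point that needs care --- and the only place where a naive approach could go wrong --- is the bookkeeping of the atom of $\rho$ at $0$ together with the matching of scales. The crucial simplification is that one never needs an exact value of $\rho(\{0\})$ nor any analysis of the remaining atoms of $\tau$: the single-term lower bound $\rho(\{0\})\ge\sigma(\{0\})\,\tau(\{0\})$ suffices, provided the threshold in (C') is rescaled by the same factor $\nu(B)^2$ that multiplies $\widehat\sigma$ for large $d$. Choosing $\epsilon<\sigma(\{0\})$ guarantees the right-hand side above is strictly positive, which is exactly what forces $|d|>k$ and prevents (C') from being satisfied vacuously by a small value of $d$.
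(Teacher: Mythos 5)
Your proof is correct and follows essentially the same route as the paper: the paper likewise convolves $\sigma$ with the spectral measure of the Bernoulli cylinder $B$ (whose Fourier coefficients vanish for $0<|d|\le k$ and equal its atom at $0$ for $|d|>k$) and applies (C') to $\sigma\star\rho$ with the threshold rescaled by $\rho(\{0\})=\nu(B)^2$, using the atom inequality for convolutions to force $|d|>k$. Your explicit handling of the degenerate cases ($\sigma(\{0\})=0$ and the need for $\epsilon<\sigma(\{0\})$ to make the lower bound strictly positive) is a point the paper leaves implicit, but the argument is the same.
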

\begin{proof}We have to prove that the integer $d$ appearing in Condition (C')
can be chosen arbitrarily large. We suppose that Condition (C') is
satisfied. Let $k$ be a positive
integer. There exists a positive
measure $\rho$ on the torus such that $\widehat\rho(n)=0$
if $|n|\leq k$ and $\widehat\rho(n)=\rho(\{0\})>0$ if $|n|>k$. (Choose
the spectral measure of the indicator of the set $B$ in the Bernoulli
scheme considered in the proof of Proposition \ref{alt-def}.) We apply
our hypothesis to the measure $\sigma\star\rho$. There exists $d\in
D$ such that
$$
|\widehat{\sigma}(d)\widehat{\rho}(d)|=|\widehat{\sigma\star\rho}(d)|>
     \sigma\star\rho(\{0\})-\epsilon\rho(\{0\})\geq
     \sigma(\{0\})\rho(\{0\})-\epsilon\rho(\{0\})\;,
     $$
     hence there exists $d\in D$, $|d|>k$, such that
$$
      |\widehat{\sigma}(d)|>\sigma(\{0\})-\epsilon\;.
      $$
      \end{proof}
\begin{question} What are the implications between the three properties :
nice vdC, nice FC$^{+}$ and nice
recurrence ?
\end{question}
Here is what we know:
\begin{enumerate}
      \item[(N1)]
      Nice FC$^{+}$ $\Rightarrow$ nice recurrence.
      \item[(N2)]
      Nice FC$^{+}$ $\Rightarrow$ nice vdC.
      \item[(N3)]
      Nice vdC $\Rightarrow$ a weak form of nice recurrence. \\Here is what this last assertion means.      Let $D$ is a nice vdC set;      for any probability measure $\sigma$ on the torus,
      $$
      \sigma(\{0\})^{2}\leq\limsup_{|d|\to\infty,d\in
      D}|\widehat{\sigma}(d)|\;,
      $$
      and, consequently, we have the following recurrence property:\\
     given
      any m.p.s. $(X,\A,\mu,T)$ and $A\in\A$, with $\mu(A)>0$, given any
      $\epsilon>0$, we have
      \begin{equation}\label{strange}
      \mu\left(A\cap T^{-d}A\right)\geq\mu(A)^{4}-\epsilon\;,
      \end{equation}
      for infinitely many $d\in D$. \\
      (Note that the exponent 4 in (\ref{strange}) is not a
typo. It would be ``nice" to better understand the meaning of inequality (\ref{strange}).)
      \end{enumerate}

The proof of (N1) is a direct application of the spectral theorem : let
$(X,\A,\mu,T)$ be a m.p.s. and $A\in\A$. There exists a positive measure $\sigma$ on
      the torus such that
      $$
      \forall n\in\N,\quad \widehat{\sigma}(n) =\mu\left(A\cap
      T^{-n}A\right) \quad\text{and}\quad\sigma(\{0\})=
      \int_{A}\mu\left(A|\I\right)\,\text{d}\mu\geq\mu(A)^{2}\;.
      $$

The proof of (N2) follows the line of the spectral characterization
described in Subsections \ref{spectral-char1} and \ref{spectral-char2}.
Let $(u_{n})$ be a sequence of complex numbers of modulus one and
$$
M:=\limsup_{|d|\to\infty,d\in
      D}\limsup_{N\to+\infty}\left|\frac1N\sum_{n=1}^{N}u_{n+d}
      \overline{u_{n}}\right|\;.
      $$
There exists an increasing sequence $(N_j)_{j\geq 0}$ of positive
integers such that
\begin{itemize}
         \item
         $\displaystyle\lim_{j\to+\infty}\frac1{N_{j}}\left|\sum_{n=1}^{N_{j}}
         u_{n}\right|=
\limsup_{N\to+\infty} \frac1{N}\left|\sum_{n=1}^{N}u_{n}\right|$\;,
\item
$\displaystyle\forall h\in \Z,\quad \gamma(h):=
\lim_{j\to+\infty}\frac1{N_{j}}\sum_{n=1}^{N_{j}}
u_{n+h}\overline{u_{n}}\qquad\mbox{exists}$\;.
\end{itemize}
The map $\gamma$ is the Fourier transform of a positive measure
$\sigma$ on the torus. Suppose that $D$ is a nice vdC set.
By Lemma \ref{affinite} we have
$$
\limsup_{N\to+\infty}
\left|\frac1{N}\sum_{n=1}^{N}u_{n}\right|^{2}\leq \sigma(\{0\}) \leq
\limsup_{|d|\to\infty,d\in
      D}|\widehat{\sigma}(d)|\leq M\;.
      $$

Claim (N3) can be proved using Lemmas \ref{lemmeprob.1.1} and
\ref{lemmeprob.2.1}. Following the method described in
Subsection \ref{spectral-char2}, we have
$$
\lim_{N\to+\infty}\frac1{N}\sum_{n=1}^{N}
Y_{n+h}\overline{Y_{n}}=\widehat{\sigma}(h)
\quad
\text{and}
\quad
\lim_{N\to+\infty}\frac1{N}\sum_{n=1}^{N}Y_{n} =
\sigma(\{0\})\;.
$$
Hence, if $D$ is nice vdC, then,
$$
      \sigma(\{0\})^{2}\leq\limsup_{|d|\to\infty,d\in
      D}|\widehat{\sigma}(d)|\;.
$$
And the claim (N3) is verified.


\medbreak

One more natural question concerns the Ramsey property.

Using product dynamical systems, it is easy to verify that the class
of sets of recurrence and the class of sets of strong recurrence
have the Ramsey property. We saw that the class of vdC sets and the class of  enhanced vdC sets have this property.
The other notions of vdC sets and of recurrence could be studied from
this point of view. 

\begin{question}
Do the class of sets of nice recurrence and the class of nice vdC sets have the Ramsey property ?
\end{question}

Note that the class of sets of nice recurrence has the Ramsey property if and only if the
following property of simultaneous nice recurrence is valid : given
any set $D\subset\Z\setminus\{0\}$ of nice recurrence, any m.p.s.
$(X,\A,\mu,T)$, any sets $A$ and $B$ in $\A$, and any $\epsilon>0$, there exists $d\in D$ such that
$$
\mu\left(A\cap T^{-d}A\right)>\mu(A)^2-\epsilon\quad\text{and}\quad
\mu\left(B\cap T^{-d}B\right)>\mu(B)^2-\epsilon\;.
$$

\section{Variations on the averaging method}\label{dist-notion}
In this short final section we provide additional remarks on some of the possible variations on the vdC theme which are related to different notions of averaging which naturally appear in the theory of uniform distribution and ergodic theory. For simplicity and in order to be able to 
more easily stress the important points, we restrict our discussion to subsets of $\Z$. 
We do want, however, to remark that many of the results in this paper can be extended to 
much a wider setup involving general groups and various methods of summation. 
(See for example \cite{Peres}, where some directions of extensions are indicated.)

\subsection{Well distribution}
Recall that a sequence $(x_n)_{n\in\N}$ of real numbers is \emph{well distributed} mod 1 if, for any continuous function $f$ on the torus $\T$, we have
$$
\lim_{N-M\to+\infty}\frac1{N-M}\sum_{n=M}^{N-1} f(x_n) = \int_{\T} f(t)\, \text{d}t.
$$
To this notion of well distribution is naturally associated a notion of van der Corput set. Let us call it w-vdC set: a set $D$ of positive integers is a w-vdC set if, for any sequence $(u_{n})_{n\in\N}$ of complex numbers
of modulus 1 such that
$$
\forall d\in D,\quad\gamma(d):=\lim_{N-M\to+\infty}
\frac1{N-M}\sum_{n=M}^{N-1}u_{n+d}\overline{u_{n}}=0
$$
we have
$$
\lim_{N-M\to+\infty} \frac1{N-M}\sum_{n=M}^{N-1}u_{n}=0\;.
$$
\\

The spectral characterization of vdC sets given in Theorem \ref{spectralchar} immediately implies that any vdC set is a w-vdC set.

But the proof, coming from Ruzsa (\cite{R}), of the fact that spectral properties (S1) and (S2) are necessary for vdC sets cannot be applied to w-vdC. This comes from the fact that the law of large numbers fails dramatically when we replace averages $1/N\sum_{0\leq n<N}$ by moving averages $1/(N-M)\sum_{M\leq n<N}$.

\begin{question} Is every w-vdC set a vdC set ?\end{question}

\subsection{F\o lner sequences}
Let $F=(F_N)_{N\geq1}$ be a F\o lner sequence in the space of parameters (which in this section is $\Z$). Let us say that a real sequence $(x_n)_{n\in\Z}$ is $F$-u.d. mod 1 
if, for any continuous function $f$ on the torus $\T$, we have
\begin{equation}\label{f-conv}
\lim_{N\to+\infty}\frac1{\left|F_N\right|}\sum_{n\in F_N}f(x_n)=\int_{\T}f(x)\,\text{d}x\;.
\end{equation}
(We say that the sequence $(f(x_n))$ converges to the integral of $f$ in the $F$-sense when (\ref{f-conv}) is satisfied.)

One can naturally define also the notion of $F$-vdC. A set $D$ of non zero integers is $F$-vdC set if any sequence $(x_n)$ such that, for all $d\in D$, the sequence $x_{n+d}-x_n$ is $F$-u.d. mod 1, is itself $F$-u.d. mod 1.

In order to compare the notion of $F$-vdC set with the notion of vdC set, it would be of interest to obtain a spectral characterization of $F$-vdC sets similar to Theorem \ref{spectralchar}.

Note that the sequence of correlations
$$
\gamma(h):=\limsup_{N\to+\infty}\frac1{\left|F_N\right|}\sum_{n\in F_N}u_{n+h}\overline{u_n}
$$
is positive-definite, and the F\o lner property is exactly what is needed in order to prove a 
result similar 
to Lemma \ref{class_spect}. An argument similar to the one used in the proof of implication 
(S2)$\Rightarrow$(S3) 
allows one to establish the fact that \emph{any vdC-set is an
$F$-vdC set}.

In the other direction we don't know any general result, but, keeping in mind the argument we used in the proof of Theorems \ref{spectralchar} and \ref{enhanced-sp.char}, we can state the following sufficient condition : \emph{suppose that for any probability measure on the torus $\T$ there exists a sequence $(Y_n)_{n\in\N}$ of complex numbers of modulus one such that, for all $h\in\Z$,
$$
\lim_{N\to+\infty}\frac1{\left|F_N\right|}\sum_{n\in F_N}Y_n=\sigma(\{0\})\quad\text{and}\quad\lim_{N\to+\infty}\frac1{\left|F_N\right|}\sum_{n\in F_N}Y_{n+h}\overline{Y_n}=\widehat\sigma(h);
$$
then any $F$-vdC set is a $vdC$ set.}

We have in particular the following result (and its multiparameter extensions).
\begin{proposition}If a F\o lner sequence $F$ is such that any bounded sequence which 
converges in the Ces\`aro sense also converges in the $F$-sense\footnote{If any bounded 
sequence which converges in the Ces\`aro sense also converges in the $F$-sense then the 
limits in the Ces\`aro sense and in the $F$-sense coincide (when they exist). This fact 
is left as an exercise for the reader.} then the notions of vdC set and $F$-vdC 
set coincide.
\end{proposition}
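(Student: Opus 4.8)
The plan is to prove the two inclusions between the class of vdC sets and the class of $F$-vdC sets. One of them, namely that every vdC set is an $F$-vdC set, has already been established in the discussion preceding the proposition and requires no hypothesis on $F$; so it remains only to show that, under the stated hypothesis, every $F$-vdC set is a vdC set. For this I would invoke the sufficient condition formulated just above the proposition: it is enough to show that for every probability measure $\sigma$ on $\T$ there exists a sequence $(Y_n)_{n\in\N}$ of complex numbers of modulus one such that, for every $h\in\Z$,
$$
\lim_{N\to+\infty}\frac1{|F_N|}\sum_{n\in F_N}Y_n=\sigma(\{0\})\quad\text{and}\quad\lim_{N\to+\infty}\frac1{|F_N|}\sum_{n\in F_N}Y_{n+h}\overline{Y_n}=\widehat\sigma(h)\;.
$$

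To produce such a sequence I would reuse Ruzsa's probabilistic construction already employed above (Lemmas \ref{lemmeprob.1.1} and \ref{lemmeprob.2.1}, together with the text following them): starting from an i.i.d.\ sequence $(\theta_n)$ of law $\sigma$ and setting $Y_n:=e(r\theta_m)$ when $n=m^2+r$ with $0\leq r\leq 2m$, one obtains, almost surely, a unimodular sequence whose ordinary Ces\`aro averages converge to $\sigma(\{0\})$ and whose correlation averages $\frac1N\sum_{n=1}^N Y_{n+h}\overline{Y_n}$ converge to $\widehat\sigma(h)$. Since there are only countably many values of $h$, the intersection of the corresponding full-measure events is still of full measure, so one may fix a single realization $(Y_n)$ for which all these Ces\`aro limits hold simultaneously.

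The key step is then to transfer these Ces\`aro limits to $F$-limits by means of the hypothesis. The sequences $(Y_n)$ and, for each $h$, $(Y_{n+h}\overline{Y_n})$ are bounded (of modulus at most one) and converge in the Ces\`aro sense; by assumption any bounded Ces\`aro-convergent sequence converges also in the $F$-sense, and (by the footnote) to the same limit. Hence the two displayed $F$-limits hold, which is exactly the sufficient condition quoted above. Consequently every $F$-vdC set is a vdC set, and, combined with the reverse inclusion, the two notions coincide.

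The argument is essentially a direct application of the already-proven sufficient condition, the only genuinely new input being that the hypothesis on $F$ is precisely strong enough to upgrade the Ces\`aro convergence furnished by Ruzsa's construction into $F$-convergence. The one point to watch is the order of quantifiers when passing from the almost-sure Ces\`aro convergence of the random sequence to a single deterministic sequence valid for all $h$ at once; this is harmless because the relevant family of events is countable, so I do not expect any serious obstacle here.
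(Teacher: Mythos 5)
Your proof is correct and follows exactly the route the paper intends: the forward inclusion from the preceding spectral discussion, and the reverse inclusion by feeding Ruzsa's probabilistic sequence (Lemmas \ref{lemmeprob.1.1} and \ref{lemmeprob.2.1}) into the sufficient condition stated just before the proposition, with the hypothesis on $F$ upgrading the almost-sure Ces\`aro limits to $F$-limits. Your remark on the countability of the family of full-measure events (one for each $h\in\Z$) is precisely the point the paper leaves implicit, and it is handled correctly.
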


\section{Appendix. A remark on divisibility of polynomials}\label{appendix}
{\underline{Definitions}}. \begin{itemize}\item
A polynomial $p\in\Z[X]$ is {\it divisible} by an integer $d$ if 
there exits $n\in\Z$ such that $d$ divides $p(n)$. \item
A polynomial 
$p\in\Z[X]$ is {\it divisible} if it is divisible by any integer.
\item
Polynomials $p_1,p_2,\ldots, p_r\in\Z[X]$ are {\it simultaneously divisible} 
by an integer $d$ if 
there exists $n\in\Z$ such that $d$ divides $p_i(n)$, $1\leq i\leq r$. 
\item
Polynomials $p_1,p_2,\ldots, p_r\in\Z[X]$ are {\it simultaneously divisible} if 
they are simultaneously divisible by any integer.
\end{itemize}
\medbreak
(Trivial examples : if $p(0)=0$ then $p$ is divisible ; the polynomial 
$2X+1$ is not divisible ; polynomials $X$ and $X+1$ are divisible but not 
simultaneously divisible.)
\\
\medbreak
{\underline{Known facts}}.  Let $p_1,p_2,\ldots, p_r\in\Z[X]$. 
There is equivalence between the following assertions\\
\begin{itemize}
\item The sequence $(p_1(n),p_2(n),\ldots,p_r(n))_{n\in\N}$ is a Poincar\'e recurrence 
sequence for finite measure preserving $\Z^r$ actions;\\
\item The sequence $(p_1(n),p_2(n),\ldots,p_r(n))_{n\in\N}$ is a van der Corput sequence 
in $\Z^r$;\\
\item $p_1,p_2,\ldots, p_r$ are simultaneously divisible.
\end{itemize}
\medbreak
In \cite{BLL}, we prove that the simultaneous
divisibility of polynomials $p_1,p_2,\ldots, p_r$ is also a necessary and sufficient condition for multiple recurrence of the type
$$
\mu(A\cap T^{p_1(n)}A\cap T^{p_2(n)}A\cap\ldots \cap T^{p_r(n)}A)>0\;.
$$
\medbreak
{\underline{Claim}}. The simultaneous divisibility of a family of polynomials is a property strictly stronger than the divisibility of any of their linear combinations. In other words, there exist two polynomials $p$ and $q$ in $\Z[X]$
such that, for any integers $a$ and $b$, the 
polynomial $ap+bq$ is divisible but the polynomials $p$ and $q$ are not 
simultaneously divisible. 
\medbreak
Here are two facts which seem to go against the previous claim.
Let $p,q\in\Z[X]$.
\begin{itemize}\item
Let $ d $ be a prime number. If for all pairs $ (a,b) $ of integers, 
the polynomial $ ap+bq $ is divisible by $ d $, 
then $ p $ and $ q $ are simultaneously divisible by $ d $.
\item
Let $ d $ and $ e $ be two relatively prime integers. 
If $ p $ and $ q $ are simultaneously divisible by $ d $ 
and simultaneously divisible by $ e $, then they are
simultaneously divisible by $ de $.
\end{itemize}
These facts indicate that the key to the distinction between the simultaneous divisibility and the divisibility of linear combinations of polynomials lies with the divisibility by $d^k$ where $d$ is a prime number and $k>1$.

\medbreak
{\underline{Proof of the Claim}}. Let us show that the polynomials 
$$
p(X)=(2+X^2+X^3)(1+2X)\quad\text{and}\quad q(X)=X(1+X)(1+2X)
$$
are not simultaneously divisible by $4$ although the polynomial 
$ap+bq$ is divisible for all $a,b$ in $\Z$.

Modulo 4, we have $p(0)=2$ and $q(0)=0$, $p(1)=0$ and $q(1)=2$, 
$p(2)=q(2)=2$, $p(3)=2$ and $q(3)=0$. This shows that $p$ and $q$ are 
not simultaneously divisible by $4$.

Let us fix $a$ and $b$ in $\Z$ and show that $ap+bq$ is divisible. It 
is of course enough to consider the case when $a$ and $b$ are 
relatively prime. The divisibility of $ap+bq$ by odd integers is 
directly given by the presence of the common factor $1+2X$. Let us 
examine divisibility by the powers of 2. We will distinguish the case 
when one of the two numbers $a$ and $b$ is even, and the case when 
both are odd. 

First case : $a$ or $b$ is even (and the other is odd). Let us 
show by induction on $k$ that, for all $k\geq0$, there exists an odd 
number $n_{k}$ such that $2^k\mid ap(n_{k})+bq(n_{k})$. We can choose 
any number $n_{0}$, and $n_{1}=1$ is OK. Suppose that the result is 
true for an integer $k\geq1$. Define $\ell:=\max\{i\geq k\,:\,2^i\mid 
ap(n_{k})+bq(n_{k})\}$. We have $\ell\geq k$ and 
$ap(n_{k})+bq(n_{k})=2^\ell\alpha$, with $\alpha$ odd. Define a new odd number by
$n_{k+1}=n_{k}+2^\ell$. Using
$$
ap(X)+bq(X)=2aX^4+(3a+2b)X^3+(a+3b)X^2+(4a+b)X+2a\;,
$$
we note that, modulo $2^{\ell+1}$,
\begin{multline*}
ap(n_{k+1})+bq(n_{k+1})=\\
ap(n_{k})+bq(n_{k})+2a(4\cdot 2^\ell
n_{k}^3)+(3a+2b)(3\cdot2^\ell
n_{k}^2)+(a+3b)(2\cdot2^\ell n_{k})+(4a+b)2^\ell=\\
2^\ell\alpha+a2^\ell n_{k}^2+2^\ell b=2^\ell(\alpha+an_{k}^2+b)\;,
\end{multline*}
Since $\alpha+an_{k}^2+b$ is even, this shows that $2^{\ell+1}\mid
ap(n_{k+1})+bq(n_{k+1})$. We have $\ell+1\geq 
k+1$, and $n_{k+1}$ is odd. This concludes the induction.

Second case : $a$ and $b$ are odd.  Let us 
show by induction on $k$ that, for all $k\geq0$, there exists an even 
number $n_{k}$ such that $2^k\mid ap(n_{k})+bq(n_{k})$. We can choose 
any number $n_{0}$, and $n_{1}=2$ is OK. Suppose that the result is 
true for an integer $k\geq1$. We define $\ell$ and 
$n_{k+1}=n_{k}+2^\ell$ as in the first case, but now the number 
$n_{k}$ is even, hence we have still  
$$
ap(n_{k+1})+bq(n_{k+1})= 
2^\ell(\alpha+an_{k}^2+b)=0\quad\text{modulo $2^{\ell+1}$}\;,
$$
and the induction process works.

In any case, we have proved that $ap+bq$ is divisible by all the powers of 2.
We know also that the polynomial $ap+bq$ is divisible by any odd 
integer. Let us prove that it is divisible by 
any integer $2^k\alpha$, where $\alpha$ is odd. 
We write $ap(X)+bq(X)=(2X+1)r(X)$. We know that $2^k\mid r(n_{k})$. By 
the B\'ezout identity, there exist integers $u$ and $v$ such that 
$$
2n_{k}+1=-u2^{k+1}+v\alpha\;.
$$
We have $\alpha\mid 2(n_{k}+2^ku)+1$ and $2^k\mid r(n_{k}+2^ku)$, 
hence
$$
2^k\alpha\mid ap(n_{k}+2^ku)+bq(n_{k}+2^ku)\;.
$$
This proves that the polynomial $ap+bq$ is divisible.

\vspace{1cm}\noindent

\end{document}